	\newcommand{\Ext}{\ensuremath{\operatorname{Ext}}}
	\newcommand{\multialg}[1]{\mathscr{M}(#1)\xspace}
	\newcommand{\corona}[1]{\mathscr{Q}(#1)\xspace}
	\newcommand{\Ad}{\ensuremath{\operatorname{Ad}}\xspace}
	\newcommand{\Hom}{\mathrm{Hom}}
	\newcommand{\Aut}{\mathrm{Aut}}
	\newcommand{\coker}{\mathrm{coker}}
	\newcommand{\Ksixnu}{K_{\mathrm{six}}}
	\newcommand{\Ksix}{K_{\mathrm{six}}^{\mathrm{u}}}
	\newcommand{\Ksixpos}{K_{\mathrm{six}}^{+,\mathrm{u}}}
	\newcommand{\Knew}{\widetilde{K}_{\mathrm{six}}^{+, \Sigma}}
	\newcommand{\id}{\mathrm{id}}
	\newcommand{\us}{\mathrm{us}}
	\newcommand{\uw}{\mathrm{uw}}
	\newcommand{\ev}{\mathrm{ev}}
	\newcommand{\w}{\mathrm{w}}
	\newcommand{\s}{\mathrm{s}}
	\newcommand{\im}{\mathrm{im}\,}
	\newcommand{\kk}{\mathrm{KK}}
	\theoremstyle{plain}
	\newtheorem{thm}{Theorem}[section]
	\newtheorem{lemma}[thm]{Lemma}
	\newtheorem{theorem}[thm]{Theorem}
	\newtheorem{proposition}[thm]{Proposition}
	\newtheorem{corollary}[thm]{Corollary}
	\newtheorem{theoremintro}{Theorem}
	\theoremstyle{definition}
	\newtheorem{definition}[thm]{Definition}
	\newtheorem{remark}[thm]{Remark}
	\newtheorem{notation}[thm]{Notation}
	\numberwithin{equation}{section}
	\numberwithin{figure}{section}
\begin{document}
	\title[The unital $\mathrm{Ext}$-groups and classification of $C^*$-algebras]{The unital $\mathbf{Ext}$-groups and classification of $C^*$-algebras}
	\author[J. Gabe]{James Gabe}
	\address[J. Gabe]{School of Mathematics and Statistics\\University of Glasgow\\University Place\\Glasgow\\G12 8SQ\\Scotland}
        \email{jamiegabe123@hotmail.com}
	\author[E. Ruiz]{Efren Ruiz}
	\address[E. Ruiz]{Department of Mathematics\\University of Hawaii,
	Hilo\\200 W. Kawili St.\\
	Hilo, Hawaii\\
	96720-4091 USA}
	\email{ruize@hawaii.edu}
	\thanks{The first named author was funded by the Carlsberg Foundation through an Internationalisation Fellowship. Parts of the paper were completed while the first named author was a PhD student, at which time he was funded by Danish National Research Foundation through the Centre for Symmetry and Deformation (DNRF92). The second named author was supported by the Simons Foundation \#567380 to Ruiz.}

\begin{abstract}
The semigroups of unital extensions of separable $C^\ast$-algebras come in two flavours: a strong and a weak version. By the unital $\Ext$-groups, we mean the groups of invertible elements in these semigroups. We use the unital $\Ext$-groups to obtain $K$-theoretic classification of both unital and non-unital extensions of $C^\ast$-algebras, and in particular we obtain a complete $K$-theoretic classification of full extensions of UCT Kirchberg algebras by stable AF algebras.  
\end{abstract}

\maketitle

\section{Introduction}

Elliott's programme of classifying nuclear $C^\ast$-algebras has seen great recent success in the case of finite, simple $C^\ast$-algebras due to the work of many hands, most prominently by work of Elliott, Gong, Lin, and Niu \cite{GongLinNiu-classZ-stable}, \cite{ElliottGongLinNiu-classfindec}, as well as the Quasidiagonality Theorem of Tikuisis, White, and Winter \cite{TikuisisWhiteWinter-qdnuc}. This crowning achievement together with the ground breaking Kirchberg--Phillips classification of purely infinite, simple $C^\ast$-algebras \cite{Kirchberg-simple}, \cite{Phillips-classification} completes the classification of separable, unital, simple $C^\ast$-algebras with finite nuclear dimension which satisfy the universal coefficient theorem (UCT). 

The main focus of this paper is the classification of non-simple $C^\ast$-algebras. The non-simple classification is especially convoluted due to the lack of a dichotomy between the purely infinite and the stably finite case. A rich class of non-simple $C^\ast$-algebras failing this dichotomy is the class of graph $C^\ast$-algebras. Great progress was made recently in \cite{ERRS-classunitalgraph}, where all unital graph $C^\ast$-algebras were classified by a $K$-theoretic invariant. 

The classification of unital graph $C^\ast$-algebras was an internal classification result, in the sense that it can only be used to compare objects which are already known to be unital graph $C^\ast$-algebras. The lack of external classification prevents the result from being applicable in the study of permanence properties for the class of graph $C^\ast$-algebras. For instance, it is an open problem whether extensions of graph $C^\ast$-algebras are again graph $C^\ast$-algebras, subordinate to $K$-theoretic obstructions. The main results of this paper will be used to solve this question for extensions of simple graph $C^\ast$-algebras in \cite{EGKRT-extgraph}.
 
The focal point for us is the classification of extensions of classifiable $C^\ast$-algebras.
In seminal work of Rørdam \cite{Rordam-classsixterm}, a  Weyl--von Neumann--Voiculescu type absorption theorem of Kirchberg was applied to obtain classification of extensions of non-unital UCT Kirchberg algebras.\footnote{A \emph{UCT Kirchberg algebra} is a separable, nuclear, simple, purely infinite $C^\ast$-algebra satisfying the universal coefficient theorem in $\kk$-theory.} This absorption theorem was generalised by Elliott and Kucerovsky \cite{ElliottKucerovsky-extensions}, thus making the techniques of Rørdam applicable for much more general classification results, as explored by Eilers, Restorff, and Ruiz in \cite{EilersRestorffRuiz-classext}. 

These methods relied heavily on the \emph{non-unital} $\Ext$-group, which is known to be isomorphic to Kasparov's group $\kk^1$.
It is not hard to observe that similar methods should apply to \emph{unital} extensions if one applies the strong unital $\Ext$-group $\Ext_\us^{-1}(\mathfrak A, \mathfrak B)$ instead. One difficulty in working with the strong $\Ext$-group is that it is even more sensitive than $\kk$-theory. For instance, let $u\in \mathfrak A$ be a unitary. In contrast to $\kk$-theory where $\kk(\Ad u) = \kk(\id_A)$, the automorphism on $\Ext_{\us}^{-1}(\mathfrak A, \mathfrak B)$ induced by $\Ad u$ is \emph{not necessarily} the identity map. The same phenomena will never happen for the weak $\Ext$-group $\Ext_{\uw}^{-1}(\mathfrak A, \mathfrak B)$ as it embeds naturally as a subgroup of $\kk^1(\mathfrak A, \mathfrak B)$.

In \cite[Theorem~3.9]{EilersRestorffRuiz-classext}, all full extensions of \emph{non-unital} UCT Kirchberg algebras by stable AF algebras are classified by their six-term exact sequences in $K$-theory (with order in $K_0$ of the ideal). We will complete the classification of such extensions obtaining classification in the case where the UCT Kirchberg algebra is unital. This will be divided into two cases: one where the extension algebra is unital, and one where it is non-unital.

In the case of unital extensions, the invariant will be $\Ksixpos$ which is the six-term exact sequence in $K$-theory together with order and position of the unit in the $K_0$-groups. The classification is as follows.

\begin{theoremintro}\label{t:classunital}
Let $\mathfrak e_i : 0 \to \mathfrak B_i \to \mathfrak E_i \to \mathfrak A_i \to 0$ be unital extensions of $C^\ast$-algebras for $i=1,2$ such that $\mathfrak A_1$ and $\mathfrak A_2$ are UCT Kirchberg algebras, and $\mathfrak B_1$ and $\mathfrak B_2$ are stable AF algebras. Then $\mathfrak E_1\cong \mathfrak E_2$ if and only if $\Ksixpos(\mathfrak e_1) \cong \Ksixpos(\mathfrak e_2)$.
\end{theoremintro}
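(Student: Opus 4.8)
The plan is to reduce the classification of unital extensions to a statement about the unital $\Ext$-groups, and then to compute those groups in the present setting. First I would establish the "only if" direction, which is essentially bookkeeping: an isomorphism $\mathfrak E_1 \cong \mathfrak E_2$ must carry the largest proper ideal of $\mathfrak E_i$ (which, since $\mathfrak A_i$ is simple, is $\mathfrak B_i$, provided $\mathfrak B_i$ is not all of $\mathfrak E_i$) onto the corresponding ideal, hence induces an isomorphism of the short exact sequences $\mathfrak e_1$ and $\mathfrak e_2$; applying the $K$-theory functor and tracking the class of the unit then yields $\Ksixpos(\mathfrak e_1) \cong \Ksixpos(\mathfrak e_2)$. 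One must also deal with the degenerate cases where an extension is trivial or where $\mathfrak A_i$ or $\mathfrak B_i$ fails to be "genuinely" present, but these are easily isolated by looking at the invariant.

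For the "if" direction, the strategy is: (1) show that $\Ksixpos$ is a complete invariant for the relevant unital $\Ext$-group, i.e. that two unital, full extensions of a fixed UCT Kirchberg algebra $\mathfrak A$ by a fixed stable AF algebra $\mathfrak B$ are strongly unitarily equivalent precisely when they have the same class in $\Ksixpos$; and (2) show that an abstract isomorphism $\Ksixpos(\mathfrak e_1) \cong \Ksixpos(\mathfrak e_2)$ can be realised by isomorphisms $\mathfrak A_1 \cong \mathfrak A_2$ and $\mathfrak B_1 \cong \mathfrak B_2$ compatible with the extension data, so that one reduces to the fixed-$\mathfrak A$, fixed-$\mathfrak B$ situation. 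For step (2), the Kirchberg--Phillips classification gives $\mathfrak A_1 \cong \mathfrak A_2$ from the (unital) $K$-theory part of the invariant, and the AF classification theorem of Elliott gives $\mathfrak B_1 \cong \mathfrak B_2$ from the ordered $K_0$ part; the index maps in the six-term sequence then have to be matched up, which is where the UCT and the freeness/injectivity properties of the relevant $\mathrm{Ext}^1$-terms enter. For step (1), the key input is an Elliott--Kucerovsky-type absorption theorem: a full, unital extension of $\mathfrak A$ by $\mathfrak B$ is nuclearly absorbing, so that the Busby-type invariant, hence the class in the strong unital $\Ext$-group, is a complete invariant up to strong unitary equivalence, and then one identifies $\Ext_{\us}^{-1}(\mathfrak A, \mathfrak B)$ with $\Ksixpos$-data via the UCT exact sequence for $\mathrm{KK}^1$ together with the positivity and unit-position refinements.

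The main obstacle I expect is precisely the passage from $\kk^1(\mathfrak A,\mathfrak B)$ (or the weak unital $\Ext$-group) to the \emph{strong} unital $\Ext$-group: as the introduction emphasises, $\Ext_{\us}^{-1}$ is strictly finer than $\kk$-theory because inner automorphisms $\Ad u$ need not act trivially, so one cannot simply quote Rørdam's or Eilers--Restorff--Ruiz's non-unital results. The resolution should be that, for the specific class at hand, the difference between the strong and weak unital $\Ext$-groups is governed by a computable obstruction group (built from $K_1(\mathfrak A)$ and $K_0(\mathfrak B)$, i.e. from the action of unitaries in multiplier algebras), and that recording the \emph{position of the unit} in $K_0$ exactly captures this obstruction — this is the reason $\Ksixpos$, rather than the plain six-term sequence, is the correct invariant. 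Making this identification precise, and checking that the unit-position data is both necessary (it is an invariant) and sufficient (it kills the $\Ad u$-ambiguity), is the technical heart of the argument; once it is in place, the classification follows by combining the absorption theorem with the Kirchberg--Phillips and Elliott AF classification theorems as above.
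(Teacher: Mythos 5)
Your overall architecture (reduce to fixed $\mathfrak A$ and $\mathfrak B$ via Kirchberg--Phillips and Elliott, then settle the fixed-algebra case via absorption and a unital $\Ext$ computation) matches the paper, and the ``only if'' direction is fine. But there is a genuine gap at what you correctly identify as the technical heart: your step (1) --- that two unital, full extensions of a fixed $\mathfrak A$ by a fixed stable AF algebra $\mathfrak B$ are strongly unitarily equivalent precisely when their $\Ksixpos$ data agree --- is false, and so is the claim that recording the position of the unit ``exactly captures'' the kernel of $\Ext^{-1}_{\us}(\mathfrak A,\mathfrak B)\to\Ext^{-1}_{\uw}(\mathfrak A,\mathfrak B)$. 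That kernel is $K_0(\mathfrak B)/\Gamma_{\mathfrak A,\mathfrak B}$, where $\Gamma_{\mathfrak A,\mathfrak B}=\{\psi([1_{\mathfrak A}]):\psi\in\Hom(K_0(\mathfrak A),K_0(\mathfrak B))\}$; but congruence of the unit-pointed six-term sequences only forces the discrepancy $x\in K_0(\mathfrak B)$ between the two strong classes to lie in the generally strictly larger group $\Gamma^{\delta_\ast}_{\mathfrak A,\mathfrak B}$, which also absorbs $\im\delta_1$ and homomorphisms out of $\ker\delta_0$ into $\coker\delta_1$ (this is exactly the content of Lemma \ref{l:congruence}). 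Concretely, if $K_0(\mathfrak A)=\mathbb Z/2$ with $[1_{\mathfrak A}]$ the generator, $K_1(\mathfrak A)=\mathbb Z$, $K_0(\mathfrak B)=\mathbb Z$ and $\delta_1$ is multiplication by $2$, then $\Gamma_{\mathfrak A,\mathfrak B}=0$ while $\Gamma^{\delta_\ast}_{\mathfrak A,\mathfrak B}=K_0(\mathfrak B)$, so there are absorbing unital extensions $\mathfrak e$ and $\mathfrak e\oplus\mathfrak e_x$ with congruent $\Ksix$ that are \emph{not} strongly unitarily equivalent.

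The paper's resolution, which your outline is missing, is that the residual discrepancy in $\Gamma^{\delta_\ast}_{\mathfrak A,\mathfrak B}/\Gamma_{\mathfrak A,\mathfrak B}$ must be realised by pulling back along an automorphism $\beta$ of $\mathfrak A$ with $K_\ast(\beta)=\id_{K_\ast(\mathfrak A)}$: one proves $[\mathfrak e\cdot\beta]_\s=[\mathfrak e]_\s+[\mathfrak e_x]_\s$ for a suitable $\beta$ (Lemma \ref{l:technical}), which requires the crossed product $\mathfrak A\rtimes_\alpha\mathbb Z$, the Pimsner--Voiculescu sequence, the naturality of the unital UCT of Theorem \ref{t:sesunitalext}, $K_1$-surjectivity of $\mathfrak A$, and the Kirchberg--Phillips realisation of $\kk$-classes by automorphisms. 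The correct fixed-algebra statement is therefore Proposition \ref{p:congtoExt}: congruence of $\Ksix$ yields $\mathfrak e_1\sim_\s\mathfrak e_2\cdot\alpha$ for \emph{some} automorphism $\alpha$ of $\mathfrak A$ --- which still suffices for $\mathfrak E_1\cong\mathfrak E_2$, since twisting by an automorphism of the quotient does not change the isomorphism class of the extension algebra. Without this automorphism twist (or some substitute for it), your argument cannot close.
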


Next we turn our attention to non-unital extensions with unital quotients. A unital extension as considered above will always be full, as the Busby map is unital and the quotient is simple. For non-unital extensions it is in general much harder to determine whether they are full or not. However, when mixing sufficient amounts of finiteness and infiniteness, it turns out that fullness is a very natural criterion, witnessed by the existence of a properly infinite, full projection in the extension algebra, see Theorem \ref{t:fullext}.

In \cite{Gabe-nonunitalext}, examples were given of non-isomorphic full extensions of the Cuntz algebra $\mathcal O_2$ by the stabilised CAR algebra $M_{2^\infty} \otimes \mathbb K$, which had isomorphic six-term exact sequences in $K$-theory with order, scales and units in the $K_0$-groups. This means that one needs a finer invariant to classify non-unital extensions when the quotient is unital. 

For this purpose, we introduce an invariant $\Knew$ which includes the usual six-term exact sequence of the extension $0 \to \mathfrak B \to \mathfrak E \xrightarrow \pi \mathfrak A \to 0$, together with the $K$-theory of the extension $0\to \mathfrak B \to \pi^{-1}(\mathbb C 1_{\mathfrak A}) \to \mathbb C \to 0$. We refer the reader to Section \ref{s:nonunital} for more details.

\begin{theoremintro}\label{t:classnonunital}
Let $\mathfrak e_i : 0 \to \mathfrak B_i \to \mathfrak E_i \to \mathfrak A_i \to 0$ be full extensions of $C^\ast$-algebras for $i=1,2$ such that $\mathfrak A_1$ and $\mathfrak A_2$ are unital UCT Kirchberg algebras, $\mathfrak B_1$ and $\mathfrak B_2$ are stable AF algebras. Then $\mathfrak E_1 \cong \mathfrak E_2$ if and only if $\Knew(\mathfrak e_1) \cong \Knew(\mathfrak e_2)$.
\end{theoremintro}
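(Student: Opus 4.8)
The plan is to reduce Theorem~\ref{t:classnonunital} to the unital case (Theorem~\ref{t:classunital}) by unitalising the extensions in a $K$-theoretically controlled way. Given a full extension $\mathfrak e \colon 0 \to \mathfrak B \to \mathfrak E \xrightarrow{\pi} \mathfrak A \to 0$ with $\mathfrak A$ a unital UCT Kirchberg algebra and $\mathfrak B$ a stable AF algebra, the subalgebra $\mathfrak D := \pi^{-1}(\mathbb C 1_{\mathfrak A}) \subseteq \mathfrak E$ sits in an extension $0 \to \mathfrak B \to \mathfrak D \to \mathbb C \to 0$, and $\mathfrak E$ is recovered as a pullback of $\mathfrak D \to \mathbb C 1_{\mathfrak A} \hookrightarrow \mathfrak A$ along $\pi$. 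The first step is therefore to pin down exactly which data about $\mathfrak D$ the invariant $\Knew$ records: by construction it contains the six-term sequence of $\mathfrak e$ together with the $K$-theory (with the relevant order and unit data) of $0 \to \mathfrak B \to \mathfrak D \to \mathbb C \to 0$, and one checks that this is precisely enough to reconstruct, up to isomorphism, both $\mathfrak D$ and the way it is glued into $\mathfrak E$.

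**The second step** is an existence/realisation result: given an isomorphism $\Knew(\mathfrak e_1) \cong \Knew(\mathfrak e_2)$, produce a unital extension $\widetilde{\mathfrak e}_i \colon 0 \to \mathfrak B_i \to \widetilde{\mathfrak E}_i \to \widetilde{\mathfrak A}_i \to 0$ attached functorially to $\mathfrak e_i$ — concretely one can take $\widetilde{\mathfrak E}_i$ to be the unitisation $\mathfrak E_i^{\sim}$ (or a minimal unital corner/pullback built from $\mathfrak E_i$ and $\mathfrak D_i$) so that $\widetilde{\mathfrak A}_i$ is again a unital UCT Kirchberg algebra (after possibly stabilising or passing to $\mathcal O_\infty$-tensored versions to fix purely-infiniteness) and $\mathfrak B_i$ is unchanged. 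The key point is that the map $\Knew(\mathfrak e) \mapsto \Ksixpos(\widetilde{\mathfrak e})$ is natural and that an isomorphism of the former produces an isomorphism of the latter; here the extra $\mathbb C$-extension data in $\Knew$ is exactly what is needed to compute the position of the unit $[1_{\widetilde{\mathfrak E}}]$ in $K_0$ of the unitised algebra. One must also verify that fullness of $\mathfrak e_i$ (via the properly infinite full projection provided by Theorem~\ref{t:fullext}) passes to, or is automatic for, the unitised extension.

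**The third step** is to invoke Theorem~\ref{t:classunital} to conclude $\widetilde{\mathfrak E}_1 \cong \widetilde{\mathfrak E}_2$, and then to descend this isomorphism back down to $\mathfrak E_1 \cong \mathfrak E_2$. Descent should work because $\mathfrak E_i$ is recoverable from $\widetilde{\mathfrak E}_i$ as the kernel of the canonical character $\widetilde{\mathfrak E}_i \to \mathbb C$ (equivalently, $\mathfrak E_i$ is a full hereditary subalgebra, or a specified pullback, inside $\widetilde{\mathfrak E}_i$), and one shows the isomorphism of Theorem~\ref{t:classunital} can be chosen to respect this extra structure — using uniqueness in the Kirchberg--Phillips machinery (approximate unitary equivalence of the classifying $*$-homomorphisms) to homotope the isomorphism so that it intertwines the characters. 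The converse direction (that $\mathfrak E_1 \cong \mathfrak E_2$ implies $\Knew(\mathfrak e_1) \cong \Knew(\mathfrak e_2)$) is the easy half: an isomorphism $\mathfrak E_1 \to \mathfrak E_2$ carries $\mathfrak B_1$ onto $\mathfrak B_2$ (both are, say, the unique AF or stably-finite ideal, or characterised intrinsically), hence induces an isomorphism of the whole extension $\mathfrak e_1 \to \mathfrak e_2$ and thus of $\Knew$, after observing that $\pi^{-1}(\mathbb C 1)$ is sent to $\pi^{-1}(\mathbb C 1)$.

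**The main obstacle** I expect is the descent step: controlling the isomorphism $\widetilde{\mathfrak E}_1 \cong \widetilde{\mathfrak E}_2$ finely enough that it restricts to the non-unital subalgebras — this is exactly the place where the strong versus weak $\Ext$-distinction discussed in the introduction bites, since $\mathrm{Ad}\,u$ need not act trivially on $\Ext_{\us}^{-1}$, so one cannot freely conjugate. Getting the unitalisation $\widetilde{\mathfrak A}_i$ to remain a Kirchberg algebra (it is a priori only unital, nuclear, separable, UCT, with a simple purely infinite quotient but a $\mathbb C$ summand floating around) may also require a bit of care, possibly tensoring with $\mathcal O_\infty$ or realising $\widetilde{\mathfrak E}_i$ as a genuine unital extension of a Kirchberg algebra rather than the naive unitisation; this is where the precise definition of $\Knew$ in Section~\ref{s:nonunital} will do the work.
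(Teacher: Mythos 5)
Your overall shape (use the extra $\mathfrak D_{\mathfrak e} = \pi^{-1}(\mathbb C 1_{\mathfrak A})$ data to reduce to the unital classification, and handle the easy direction via uniqueness of the maximal ideal) matches the paper, but the central reduction as you describe it does not work, and the correct mechanism is different from what you sketch. The naive unitisation $\mathfrak E_i^{\sim}$ is a unital extension of $\mathfrak A_i^{\sim} \cong \mathfrak A_i \oplus \mathbb C$ by $\mathfrak B_i$, and $\mathfrak A_i \oplus \mathbb C$ is not simple, so Theorem~\ref{t:classunital} simply does not apply; moreover $1_{\mathfrak E_i^{\sim}}$ maps to $1 \in \mathbb C$ under the canonical character, so it cannot be properly infinite, and tensoring with $\mathcal O_\infty$ destroys the hypothesis that the ideal is AF. Your parenthetical ``minimal unital corner'' is the germ of the right idea, but you leave it undeveloped, and it is exactly where all the work lies.

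What the paper actually does: after using Kirchberg--Phillips and Elliott's AF classification (note $\mathfrak D_{\mathfrak e_i}$ is an extension of AF algebras, hence AF) to realise the given isomorphisms on $\mathfrak A_i$ and on $\mathfrak D_{\mathfrak e_i}$ by genuine $\ast$-isomorphisms, one may assume $\mathfrak A_1 = \mathfrak A_2$, $\mathfrak B_1 = \mathfrak B_2$ and --- crucially, via Lemma~\ref{l:De} --- that the Busby maps satisfy $\tau_1(1_{\mathfrak A}) = \tau_2(1_{\mathfrak A})$. This is precisely where the extra $K_0^{+,\Sigma}(\mathfrak D_{\mathfrak e})$ component of $\Knew$ is consumed. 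Since $\mathfrak B$ has real rank zero and $K_1(\mathfrak B) = 0$, the common projection $\tau_1(1_{\mathfrak A})$ lifts to a projection $p \in \multialg{\mathfrak B}$, and one cuts down by $1_{\mathfrak A} \oplus p$ to obtain \emph{unital} extensions $0 \to p\mathfrak B p \to (1_{\mathfrak A}\oplus p)\mathfrak E_i(1_{\mathfrak A}\oplus p) \to \mathfrak A \to 0$ with $p\mathfrak B p$ again a \emph{stable} AF algebra (Lemma~\ref{l:stableAF}, via Theorem~\ref{t:fullext}). To these, Proposition~\ref{p:congtoExt} applies and yields, after twisting by an automorphism of $\mathfrak A$, a \emph{strong unitary equivalence} of the corner extensions --- not merely an isomorphism of their extension algebras. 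The descent you worry about is then handled by the elementary Lemma~\ref{l:unitvsnonunit}: the implementing unitary $\widetilde u \in p\multialg{\mathfrak B}p$ is extended to $u = \widetilde u + (1_{\multialg{\mathfrak B}} - p)$, giving strong unitary equivalence of the original non-unital extensions. No Kirchberg--Phillips uniqueness/homotopy argument is needed or available for that step; indeed, if you only produced an abstract isomorphism of unitalised algebras, you would have no handle on making it intertwine the characters, which is the gap your own ``main obstacle'' paragraph correctly senses but does not close.
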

 
In the paper \cite{EGKRT-extgraph} we will compute the range of the invariant $\Knew$ for graph $C^\ast$-algebras with exactly one non-trivial ideal and for which the non-trivial quotient is unital. This will be used to show that an extension of simple graph $C^\ast$-algebras is again a graph $C^\ast$-algebra, provided there are no $K$-theoretic obstructions.

%%%%%%%%%%%%%%%%%%%%%%%%%%%%%%%%%%%%%%%%%%%%%%%%%%%%%%%%
%%%%%%%%%%%%%%%%%%%%%%%%%%%%%%%%%%%%%%%%%%%%%%%%%%%%%%%%

\section{Extensions of $C^*$-algebras}

In this section we recall some well-known definitions and results about extensions of $C^\ast$-algebras. More details can be found in \cite[Chapter VII]{Blackadar-book-K-theory}.

For a $C^*$-algebra $\mathfrak{B}$, we will denote the multiplier algebra by $\multialg{\mathfrak{B}}$, the corona algebra $\multialg{ \mathfrak{B} } / \mathfrak{B}$ by $\corona{\mathfrak{B}}$, and the canonical $\ast$-epimorphism from $\multialg{ \mathfrak{B} }$ to $\corona{ \mathfrak{B} }$ by $\pi_\mathfrak{B}$.  

Let $\mathfrak{A}$ and $\mathfrak{B}$ be $C^*$-algebras.  An \emph{extension of $\mathfrak A$ by $\mathfrak B$} is a short exact sequence 
\[
\mathfrak e : 0 \to \mathfrak{B} \xrightarrow{\iota} \mathfrak{E} \xrightarrow{\pi} \mathfrak{A} \to 0
\]
of $C^\ast$-algebras. Often we just refer to such a short exact sequence above, as an extension of $C^\ast$-algebras. At times we identify $\mathfrak B$ with its image $\iota(\mathfrak B)$ in $\mathfrak E$, which is a two-sided, closed ideal, and at times we identify $\mathfrak A$ with the quotient $\mathfrak E/\iota(\mathfrak B)$.

To any extension of $C^\ast$-algebras as above, there are induced $\ast$-homomorphisms $\sigma \colon \mathfrak E \to \multialg{\mathfrak B}$ and $\tau \colon \mathfrak A \to \corona{\mathfrak B}$, the latter of these called the \emph{Busby map} (or Busby invariant) of $\mathfrak e$. We sometimes refer to arbitrary $\ast$-homomorphisms $\mathfrak A \to \corona{\mathfrak B}$ as Busby maps.

An extension can be recovered up to canonical isomorphism of extensions by its Busby map $\tau$, as the extension
\[
0 \to \mathfrak B \to \mathfrak A \oplus_{\tau, \pi_{\mathfrak B}} \multialg{\mathfrak B} \to \mathfrak A \to 0
\]
where
\[
\mathfrak A \oplus_{\tau, \pi_{\mathfrak B}} \multialg{\mathfrak B} = \{ a\oplus m \in \mathfrak A \oplus \multialg{\mathfrak B} : \tau(a) = \pi_{\mathfrak B}(m)\}
\]
is the pull-back of $\tau$ and $\pi_{\mathfrak B}$. 

An extension is \emph{unital} if the extension algebra is unital, or equivalently, if the Busby map is a unital $\ast$-homomorphism.

A (unital) extension $\mathfrak e : 0\to \mathfrak B \to \mathfrak E \xrightarrow \pi \mathfrak A \to 0$ is called \emph{trivial} (or split) if there is a (unital) $\ast$-homomorphism $\rho \colon \mathfrak A \to \mathfrak E$ such that $\pi \circ \rho = \id_\mathfrak{A}$.\footnote{Note that a unital extension being trivial is slightly different from an extension -- which happens to be unital -- being trivial. In fact, the first requires $\rho(1_{\mathfrak A}) = 1_{\mathfrak E}$ which the other does not, and in general these two notions are different.} The extension $\mathfrak e$ is called \emph{semi-split} if there is a (unital) completely positive map $\eta \colon \mathfrak A \to \mathfrak E$ such that $\pi \circ \rho = \id_\mathfrak{A}$.

Let $\mathfrak e_i : 0 \to \mathfrak B \to \mathfrak E_i \to \mathfrak A \to 0$ be extensions of $C^\ast$-algebras with Busby maps $\tau_i$ for $i=1,2$. We say that $\mathfrak e_1$ and $\mathfrak e_2$ are \emph{strongly unitarily equivalent}, written $\mathfrak e_1 \sim_\s \mathfrak e_2$, if there exists a unitary $u\in \multialg{\mathfrak B}$ such that $\Ad \pi_{\mathfrak B}(u) \circ \tau_1 = \tau_2$.

By identifying $\mathfrak E_i$ with $\mathfrak A \oplus_{\tau_i , \pi_{\mathfrak B}} \multialg{\mathfrak B}$, we obtain the following commutative diagram
\begin{equation}\label{eq:suediag}
\xymatrix{
0 \ar[r] & \mathfrak B \ar[d]^{\Ad u}_\cong \ar[rr] && \mathfrak E_1 \ar[d]^{\Ad(1_{\mathfrak A} \oplus u)}_{(\cong)} \ar[rr] && \mathfrak A \ar@{=}[d] \ar[r] & 0 \\
0 \ar[r] & \mathfrak B \ar[rr] && \mathfrak E_2 \ar[rr] && \mathfrak A \ar[r] & 0
}
\end{equation}
with exact rows, which shows that $\Ad(1_{\mathfrak A} \oplus u) \colon \mathfrak E_1 \xrightarrow \cong \mathfrak E_2$ is an isomorphism by the five lemma.

Similarly, $\mathfrak e_1$ and $\mathfrak e_2$ are \emph{weakly unitary equivalent}, written $\mathfrak e_1 \sim_\w \mathfrak e_2$, if there exists a unitary $u\in \corona{\mathfrak B}$ such that $\Ad u \circ \tau_1 = \tau_2$. 

In contrast to strong unitary equivalence, we cannot in general conclude that the extension algebras $\mathfrak E_1$ and $\mathfrak E_2$ are isomorphic from  weak unitary equivalence.

\begin{remark}[Cuntz sum]
If $\mathfrak B$ is a stable $C^\ast$-algebra, then there are isometries $s_1,s_2\in \multialg{\mathfrak B}$ such that $s_1s_1^\ast + s_2 s_2^\ast = 1$. Such a pair $s_1,s_2$ are called \emph{$\mathcal O_2$-isometries}.

If $\mathfrak e_i : 0 \to \mathfrak B \to \mathfrak E_i \to \mathfrak A \to 0$ are extensions with Busby maps $\tau_i$ for $i=1,2$, then we let $\mathfrak e_1 \oplus_{s_1,s_2} \mathfrak e_2$ denote the extension of $\mathfrak A$ by $\mathfrak B$ with Busby map $\tau$ given by
\[
\tau(a) = \pi_{\mathfrak B}(s_1) \tau_1(a) \pi_{\mathfrak B}(s_1)^\ast + \pi_{\mathfrak B}(s_2) \tau_2(a) \pi_{\mathfrak B}(s_2)^\ast
\]
for $a\in A$. This construction is independent of the choice of $s_1$ and $s_2$ up to strong unitary equivalence, and thus we often write $\mathfrak e_1 \oplus \mathfrak e_2$, when we only care about the extension up to $\sim_\s$.
\end{remark}

\begin{definition}
Let $\mathfrak A$ and $\mathfrak B$ be separable $C^\ast$-algebras with $\mathfrak B$ stable. We let
\begin{itemize}
\item $\Ext(\mathfrak A, \mathfrak B)$ denote the semigroup of extensions of $\mathfrak A$ by $\mathfrak B$ modulo the relation defined by $[\mathfrak e_1] = [\mathfrak e_2]$ if and only if there exist trivial extensions $\mathfrak f_1,\mathfrak f_2$ of $\mathfrak A$ by $\mathfrak B$ such that
\[
\mathfrak e_1 \oplus \mathfrak f_1 \sim_\w \mathfrak e_2 \oplus \mathfrak f_2,
\]
or equivalently, there exist trivial extensions $\mathfrak f_1' , \mathfrak f_2'$ of $\mathfrak A$ by $\mathfrak B$ (which can be taken as $\mathfrak f_i' = \mathfrak f_i \oplus 0$) such that
\[
\mathfrak e_1 \oplus \mathfrak f_1' \sim_\s \mathfrak e_2 \oplus \mathfrak f_2'.
\]
\end{itemize}
Moreover, if $\mathfrak A$ is unital then we let
\begin{itemize}
\item $\Ext_\us(\mathfrak A, \mathfrak B)$ denote the semigroup of \emph{unital} extensions of $\mathfrak A$ by $\mathfrak B$ modulo the relation defined by $[\mathfrak e_1]_\s = [\mathfrak e_2]_\s$ if and only if there exist trivial, \emph{unital} extensions $\mathfrak f_1,\mathfrak f_2$ of $\mathfrak A$ by $\mathfrak B$ such that
\[
\mathfrak e_1 \oplus \mathfrak f_1 \sim_\s \mathfrak e_2 \oplus \mathfrak f_2.
\]
\item $\Ext_\uw(\mathfrak A, \mathfrak B)$ denote the semigroup of \emph{unital} extensions of $\mathfrak A$ by $\mathfrak B$ modulo the relation defined by $[\mathfrak e_1]_\w = [\mathfrak e_2]_\w$ if and only if there exist trivial, \emph{unital} extensions $\mathfrak f_1,\mathfrak f_2$ of $\mathfrak A$ by $\mathfrak B$ such that
\[
\mathfrak e_1 \oplus \mathfrak f_1 \sim_\w \mathfrak e_2 \oplus \mathfrak f_2.
\]
\end{itemize}
If $\mathfrak B$ is not stable, we define $\Ext_{(\us/\uw)}(\mathfrak A, \mathfrak B) := \Ext_{(\us/\uw)}(\mathfrak A, \mathfrak B \otimes \mathbb K)$.
\end{definition}

It is not hard to show that $\Ext_{(\us/\uw)}(\mathfrak A, \mathfrak B)$ is an abelian monoid, and that any trivial (unital) extension induces the zero element.
Hence the following makes sense.

\begin{definition}
Let $\Ext^{-1}(\mathfrak A,\mathfrak B)$, $\Ext^{-1}_\us(\mathfrak A, \mathfrak B)$ and $\Ext^{-1}_\uw(\mathfrak A , \mathfrak B)$ denote the subsemigroups of $\Ext(\mathfrak A,\mathfrak B)$, $\Ext_\us(\mathfrak A, \mathfrak B)$ and $\Ext_\uw(\mathfrak A , \mathfrak B)$ respectively (whenever these make sense), of elements which have an additive inverse. These subsets are abelian groups.
\end{definition}

\begin{remark}[Semisplit extensions]
Let $\mathfrak A$ and $\mathfrak B$ be separable $C^\ast$-algebras with $\mathfrak B$ stable (and $\mathfrak A$ unital). As in \cite[Section 15.7]{Blackadar-book-K-theory} it follows that a (unital) extension of $\mathfrak A$ by $\mathfrak B$ induces an element in $\Ext(\mathfrak A, \mathfrak B)$ (resp.~in either $\Ext_\us(\mathfrak A, \mathfrak B)$ or $\Ext_\uw(\mathfrak A, \mathfrak B)$) which has an additive inverse, if and only if the extension is semisplit.

In particular, if $\mathfrak A$ is nuclear it follows from the Choi--Effros Lifting Theorem \cite{ChoiEffros-lifting} that
\[
\Ext^{-1}(\mathfrak A,\mathfrak B) = \Ext(\mathfrak A, \mathfrak B), \quad \Ext^{-1}_\us(\mathfrak A,\mathfrak B) = \Ext_\us(\mathfrak A,\mathfrak B), \quad \Ext^{-1}_\uw(\mathfrak A,\mathfrak B) = \Ext_\uw(\mathfrak A,\mathfrak B).
\]
\end{remark}

\begin{definition}[Pull-back and push-out extensions]\label{d:pbpo}
Let $\mathfrak{e} : 0 \to \mathfrak B \to \mathfrak E \to \mathfrak A \to 0$ be an extension of $C^\ast$-algebras with Busby map $\tau$, and let $\alpha \colon \mathfrak C \to \mathfrak A$ be a $\ast$-homomorphism.  The \emph{pull-back extension} $\mathfrak e \cdot \alpha$ is the extension of $\mathfrak C$ by $\mathfrak B$ with Busby map $\tau \circ \alpha$.

If $\beta \colon \mathfrak B \to \mathfrak D$ is a non-degenerate $\ast$-homo\-morphism\footnote{A $\ast$-homomorphism $\beta \colon \mathfrak B \to \mathfrak D$ is \emph{non-degenerate} (or proper) if $\overline{\beta(\mathfrak B) \mathfrak D} = \mathfrak D$.} there is an induced unital $\ast$-homo\-morphism $\overline \beta \colon \corona{\mathfrak B} \to \corona{\mathfrak D}$.\footnote{In fact, $\beta$ induces a unital $\ast$-homomorphism $\multialg{\beta} \colon \multialg{\mathfrak B} \to \multialg{\mathfrak D}$ by $\multialg{\beta}(m) (\beta(b)d) := \beta(mb) d$ for $m \in \multialg{\mathfrak B}$, $b\in \mathfrak B$ and $d\in \mathfrak D$. This $\ast$-homomorphism descends to a unital $\ast$-homomorphism $\overline \beta \colon \corona{\mathfrak B} \to \corona{\mathfrak D}$.} The \emph{push-out extension} $\beta \cdot \mathfrak e$ is the extension of $\mathfrak A$ by $\mathfrak D$ with Busby map $\overline \beta \circ \tau$.

If $\eta \colon \corona{\mathfrak B} \to \corona{\mathfrak D}$ is a $\ast$-homomorphism, then we let $\eta \cdot \mathfrak e$ denote the extension of $\mathfrak A$ by $\mathfrak D$ with Busby map $\eta \circ \tau$. In particular, with $\beta$ as above, we have $\beta \cdot \mathfrak e = \overline \beta \cdot \mathfrak e$.
\end{definition}

With the notation as above, the push-out and pull-back extensions fit into the following commutative diagram with exact rows
\[
\xymatrix{
\mathfrak e \cdot \alpha : & 0 \ar[r] & \mathfrak B \ar@{=}[d] \ar[r] & \mathfrak E_\alpha \ar[d] \ar[r] & \mathfrak C \ar[d]^\alpha \ar[r] & 0 \\
\mathfrak e : & 0 \ar[r] & \mathfrak B \ar[d]^\beta \ar[r] & \mathfrak E \ar[d] \ar[r] & \mathfrak A \ar[r] \ar@{=}[d] & 0 \\
\beta \cdot \mathfrak e : & 0 \ar[r] & \mathfrak D \ar[r] & \mathfrak E_\beta \ar[r] & \mathfrak A \ar[r] & 0.
}
\]
The top two rows form a pull-back diagram and the bottom two rows form a push-out diagram.

\begin{remark}[Functoriality]
The pull-back/push-out constructions of extensions turn $\Ext_{(\us/\uw)}(\mathfrak A, \mathfrak B)$ into a bifunctor with respect to (unital) $\ast$-homomorphisms in the first variable, and non-degenerate $\ast$-homomorphisms in the second variable.

A fair warning: while any unital $\ast$-homomorphism $\eta \colon \corona{\mathfrak B} \to \corona{\mathfrak D}$ induces a map $\mathfrak e \mapsto \eta \cdot \mathfrak e$ which preserves $\sim_\w$ (and $\sim_\s$ if $\mathfrak B$ is stable\footnote{In fact, if $\mathfrak B$ is stable then the unitary group $\mathcal U(\multialg{\mathfrak B})$ is connected, and thus a unitary $u\in \corona{\mathfrak B}$ lifts to a unitary in $\multialg{\mathfrak B}$ exactly when $u\in  \mathcal U_0(\corona{\mathfrak B})$, i.e.~the connected component of $1_\corona{\mathfrak B}$ in the unitary group. As $\eta(\mathcal U_0(\corona{\mathfrak B})) \subseteq \mathcal U_0(\corona{\mathfrak D})$, and as every unitary in $\mathcal U_0(\corona{\mathfrak D})$ lifts to a unitary in $\multialg{\mathfrak D}$, it easily follows that $\eta$ preserves strong unitary equivalence classes of extensions.}), it does in general not preserve Cuntz sums. This construction will be crucial in Remark \ref{r:ex} where we define $\mathfrak e_{[u]} = \Ad u \cdot \mathfrak e_0$ for a unitary $\mathcal U(\corona{\mathfrak B})$ and a trivial unital extension $\mathfrak e_0$.
\end{remark}

The following is a celebrated result of Kasparov \cite{Kasparov-KKExt}.

\begin{theorem}[{\cite{Kasparov-KKExt}}]\label{t:KKExt}
If $\mathfrak A$ and $\mathfrak B$ are separable $C^\ast$-algebras, then $\Ext^{-1}(\mathfrak A, \mathfrak B)$ is naturally isomorphic to Kasparov's group $\kk^1(\mathfrak A, \mathfrak B)$.
\end{theorem}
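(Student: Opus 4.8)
The plan is to follow Kasparov's own route: to build explicit homomorphisms in both directions between $\Ext^{-1}(\mathfrak A, \mathfrak B)$ and $\kk^1(\mathfrak A, \mathfrak B)$ and to show that they are mutually inverse, after which naturality will be visible from the formulas. Since both groups are unchanged when $\mathfrak B$ is replaced by $\mathfrak B \otimes \mathbb K$, I may assume $\mathfrak B$ is stable and identify $\multialg{\mathfrak B} = \mathcal L_{\mathfrak B}(H_{\mathfrak B})$, $\mathfrak B = \mathcal K_{\mathfrak B}(H_{\mathfrak B})$ and $\corona{\mathfrak B} = \mathcal L_{\mathfrak B}(H_{\mathfrak B})/\mathcal K_{\mathfrak B}(H_{\mathfrak B})$, where $H_{\mathfrak B} = \mathfrak B \otimes \ell^2(\mathbb N)$ is the standard Hilbert $\mathfrak B$-module. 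I will use the description of $\kk^1(\mathfrak A, \mathfrak B)$ by odd (ungraded) Kasparov modules $(\mathcal E, \psi, F)$ --- $\mathcal E$ a countably generated Hilbert $\mathfrak B$-module, $\psi \colon \mathfrak A \to \mathcal L_{\mathfrak B}(\mathcal E)$ a $\ast$-homomorphism, and $F \in \mathcal L_{\mathfrak B}(\mathcal E)$ with $(F - F^\ast)\psi(a)$, $(F^2 - 1)\psi(a)$ and $[F, \psi(a)]$ compact for all $a$ --- taken modulo unitary equivalence, addition of degenerate modules, and operator homotopy.

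\emph{From extensions to $\kk^1$.} A class in $\Ext^{-1}(\mathfrak A, \mathfrak B)$ is represented by a semisplit extension $\mathfrak e$; composing a completely positive contractive splitting $\mathfrak A \to \mathfrak E$ with $\sigma \colon \mathfrak E \to \multialg{\mathfrak B}$ gives a completely positive contractive lift $\varphi \colon \mathfrak A \to \mathcal L_{\mathfrak B}(H_{\mathfrak B})$ of the Busby map $\tau$, satisfying $\varphi(a)\varphi(b) - \varphi(ab) \in \mathcal K_{\mathfrak B}(H_{\mathfrak B})$ for all $a, b$. Kasparov's Stinespring dilation theorem produces a $\ast$-homomorphism $\psi \colon \mathfrak A \to \mathcal L_{\mathfrak B}(\mathcal E)$ on a countably generated module and an isometry $V \colon H_{\mathfrak B} \to \mathcal E$ with $\varphi = V^\ast \psi(\cdot) V$. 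Writing $P = VV^\ast$, one computes that $V^\ast \psi(a^\ast)(1 - P)\psi(b) V$ equals the defect $\varphi(a^\ast b) - \varphi(a^\ast)\varphi(b)$, hence is compact; as $T^\ast T$ compact forces $T$ compact for adjointable operators between Hilbert modules, this makes $(1 - P)\psi(a) V$, and therefore $[P, \psi(a)]$, compact for every $a$. Thus $(\mathcal E, \psi, 2P - 1)$ is an odd Kasparov module, and after adding a degenerate module and applying Kasparov's stabilization theorem I may take $\mathcal E = H_{\mathfrak B}$. Using straight-line homotopies of completely positive lifts and the essential uniqueness of Stinespring dilations one checks that the resulting class $\Phi(\mathfrak e) \in \kk^1(\mathfrak A, \mathfrak B)$ is independent of all choices, that it is additive for Cuntz sums, and that a trivial extension --- where $\varphi$ may be taken a $\ast$-homomorphism, so $P$ commutes with $\psi(\mathfrak A)$ --- gives a degenerate module and hence $0$. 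So $\Phi$ is a well-defined group homomorphism.

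\emph{From $\kk^1$ to extensions.} Conversely, using Kasparov stabilization together with functional calculus homotopies of the operator (homotoping $F$ to a self-adjoint unitary and absorbing degenerate modules, stability of $\mathfrak B$ providing the necessary room), every class in $\kk^1(\mathfrak A, \mathfrak B)$ is represented by a cycle $(H_{\mathfrak B}, \psi, F)$ with $F = F^\ast$ and $F^2 = 1$. Setting $P = \tfrac{1}{2}(1 + F)$, the relations $[F, \psi(a)] \in \mathcal K_{\mathfrak B}(H_{\mathfrak B})$ and $F^2 = 1$ make $a \mapsto \pi_{\mathfrak B}(P\psi(a)P)$ a $\ast$-homomorphism $\mathfrak A \to \corona{\mathfrak B}$ with completely positive lift $a \mapsto P\psi(a)P$, i.e.\ the Busby map of a semisplit extension; this defines $\Psi(x) \in \Ext^{-1}(\mathfrak A, \mathfrak B)$. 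Additivity of $\Psi$ and its vanishing on trivial extensions (a degenerate cycle has $P$ commuting with $\psi(\mathfrak A)$, so $P\psi(\cdot)P$ lifts to a $\ast$-homomorphism) are immediate. That $\Psi$ is well defined --- independent of the chosen representative cycle --- is the crux: it amounts to homotopy invariance of $\Ext^{-1}$, which follows from Kasparov's Weyl--von Neumann--Voiculescu absorption theorem for Hilbert modules (a semisplit extension absorbs, up to strong unitary equivalence, the trivial extension induced by any sufficiently large $\ast$-representation), the same theorem also being what aligns the equivalence relation defining $\Ext^{-1}$ with the one from the cycle picture.

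\emph{Conclusion.} The composite $\Psi \circ \Phi$ is the identity essentially by inspection: compressing the Stinespring dilation by $P = VV^\ast$ returns $\varphi$ up to compacts, hence returns $\tau$ up to conjugation by an isometry of $\multialg{\mathfrak B}$, which does not alter the class in $\Ext^{-1}$. For $\Phi \circ \Psi$, dilating $P\psi(\cdot)P$ and forming the associated cycle recovers $(H_{\mathfrak B}, \psi, F)$ after discarding a degenerate summand (on which $2VV^\ast - 1 = -1$) and restabilising. Naturality in the first variable (pullback of extensions corresponds to precomposition $\psi \circ \alpha$ of the cycle) and in the second (pushout along a non-degenerate $\beta$ corresponds to interior tensoring of the cycle by $\mathfrak D$) is then immediate. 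I expect the real obstacle to be precisely the absorption theorem and the accompanying standard-form lemmas for odd Kasparov modules --- equivalently, the statement that operator homotopy of cycles and stable strong unitary equivalence of extensions define the same equivalence relation --- which is where Kasparov's Stinespring and generalised Voiculescu theorems are indispensable.
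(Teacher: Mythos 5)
The paper does not prove this statement at all: it is quoted directly from Kasparov's paper \cite{Kasparov-KKExt} as a known result, so there is no in-paper argument to compare yours against. Your outline is a faithful reconstruction of the standard (Kasparov) proof --- Stinespring dilation of a completely positive lift to pass from a semisplit extension to an odd cycle, compression by the spectral projection of a self-adjoint unitary $F$ to go back, with the generalised Weyl--von Neumann--Voiculescu absorption theorem carrying the burden of matching operator homotopy of cycles with stable strong unitary equivalence of extensions --- and you correctly identify that last step as the genuinely hard analytic content rather than pretending it is routine. As a self-contained proof it is of course only a sketch (well-definedness of $\Psi$ under homotopy, and the identification of $\corona{\mathfrak B}$ with the Calkin algebra of $H_{\mathfrak B}$ via stabilisation, are each theorems in their own right), but the architecture is the correct one and matches the source the paper cites.
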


\begin{remark}[Absorbing extensions]
Let $\mathfrak A$ and $\mathfrak B$ be separable $C^\ast$-algebras with $\mathfrak B$ stable (and $\mathfrak A$ unital). A (unital) extension $\mathfrak e$ of $\mathfrak A$ by $\mathfrak B$ is called \emph{absorbing} if $\mathfrak e \sim_\s \mathfrak e \oplus \mathfrak f$ for any trivial (unital) extension $\mathfrak f$ of $\mathfrak A$ by $\mathfrak B$.\footnote{Just as with triviality, there is a difference between requiring that an extension is absorbing, or that a unital extension is absorbing. Sometimes absorbing unital extensions are said to be unital-absorbing. However, we simply call these absorbing as there is no cause of confusion, since a unital extension can never be absorbing in the general sense (it would have to absorb the extension with zero Busby map, which is impossible).}

By \cite{Thomsen-absorbing} there always exists an absorbing, trivial (unital) extension $\mathfrak e_0$ of $\mathfrak A$ by $\mathfrak B$.\footnote{This requires that $\mathfrak A$ and $\mathfrak B$ are separable. Although the definition of absorption makes sense without separability, we stick to this case.} In particular, $\mathfrak e \oplus \mathfrak e_0$ is absorbing for any (unital) extension $\mathfrak e$. 

In particular, if $\mathfrak e_1$ and $\mathfrak e_2$ are absorbing extensions of $\mathfrak A$ by $\mathfrak B$ with $[\mathfrak e_1] = [\mathfrak e_2]$ in $\Ext(\mathfrak A, \mathfrak B)$, then $\mathfrak e_1 \sim_\s \mathfrak e_2$.

Similarly, if $\mathfrak e_1$ and $\mathfrak e_2$ are absorbing \emph{unital} extensions of $\mathfrak A$ by $\mathfrak B$ with $[\mathfrak e_1]_\s = [\mathfrak e_2]_\s$ in $\Ext_\us(\mathfrak A, \mathfrak B)$ (resp.~$[\mathfrak e_1]_\w = [\mathfrak e_2]_\w$ in $\Ext_\uw(\mathfrak A, \mathfrak B)$), then $\mathfrak e_1 \sim_\s \mathfrak e_2$ (resp.~$\mathfrak e_1 \sim_\w \mathfrak e_2$).
\end{remark}

\begin{remark}[Determining absorption]\label{r:purelylarge}
A priori, it seems inconceivable that one could ever determine when an extension is absorbing. However, this was done by Elliott and Kucerovsky in \cite{ElliottKucerovsky-extensions}.

Following \cite{ElliottKucerovsky-extensions}, an extension $\mathfrak e : 0 \to \mathfrak B \to \mathfrak E \to \mathfrak A \to 0$ of separable $C^\ast$-algebras is called \emph{purely large} if for any $x\in \mathfrak E \setminus \mathfrak B$, there exists a stable $C^\ast$-subalgebra $\mathfrak D \subseteq \overline{x^\ast \mathfrak B x}$ such that $\overline{\mathfrak B \mathfrak D \mathfrak B} = \mathfrak B$. 

By a remarkable result \cite[Theorem 6]{ElliottKucerovsky-extensions}, if $\mathfrak e : 0 \to \mathfrak B \to \mathfrak E \to \mathfrak A \to 0$ is a unital extension of separable $C^\ast$-algebras for which $\mathfrak A$ is nuclear and $\mathfrak B$ is stable, then $\mathfrak e$ is absorbing (in the unital sense) if and only if it is purely large. Similar conditions for when non-unital extensions are absorbing were studied in \cite{Gabe-nonunitalext}.

A separable $C^\ast$-algebra $\mathfrak B$ is said to have the \emph{corona factorisation property} if any full projection $p\in \multialg{\mathfrak B \otimes \mathbb K}$ is equivalent to $1_{\multialg{\mathfrak B\otimes \mathbb K}}$. Many classes of separable $C^\ast$-algebras are known to have the corona factorisation property, e.g.~all $C^\ast$-algebras with finite nuclear dimension by \cite[Corollary 3.5]{Robert-nucdim} (building on the work in \cite{OrtegaPereraRordam-cfp}). In particular, any AF algebra has the corona factorisation property, as these have nuclear dimension zero.

An extension $\mathfrak e$ of $\mathfrak A$ by $\mathfrak B$ with Busby map $\tau \colon \mathfrak A \to \corona{\mathfrak B}$ is called \emph{full} if for every non-zero $a\in \mathfrak A$, $\tau(a)$ generates all of $\corona{\mathfrak B}$ as a two-sided, closed ideal.
As observed by Kucerovsky and Ng in \cite{KucerovskyNg-corona}, if $\mathfrak e : 0 \to \mathfrak B \to \mathfrak E \to \mathfrak A \to 0$ is a full extension of separable $C^\ast$-algebras, for which $\mathfrak B$ is stable and has the corona factorisation property, then $\mathfrak e$ is purely large.
\end{remark}

%%%%%%%%%%%%%%%%%%%%%%%%%%%%%%%%%%%%%%%%%%%%%%%%%%%%%%%%%%%%%%%%%%
%%%%%%%%%%%%%%%%%%%%%%%%%%%%%%%%%%%%%%%%%%%%%%%%%%%%%%%%%%%%%%%%%%

\section{$K$-theory of unital extensions}

The purpose of this section is to collect some results on the $K$-theory of extensions of $C^\ast$-algebras, with a main focus on what happens to the unit in the $K_0$-groups under certain operations of unital extensions. While most results in this section are quite elementary and most likely well-known to some experts in the field, we know of no references to these results and have included detailed proofs for completion.

Consider two six-term exact sequences
\[
\xymatrix{
\mathsf x^{(i)}:  & H_0^{(i)} \ar[r] & L_0^{(i)} \ar[r] & G_0^{(i)} \ar[d] \\
& G_1^{(i)} \ar[u] & L_1^{(i)} \ar[l] & H_1^{(i)} \ar[l]
}
\]
for $i=1,2$. A homomorphism $(\psi_\ast , \rho_\ast , \phi_\ast) \colon \mathsf x^{(1)} \to \mathsf x^{(2)}$ of six-term exact sequences consists of homomorphisms
\[
\phi_\ast \colon G_\ast^{(1)} \to G_\ast^{(2)}, \qquad \psi_\ast \colon H_\ast^{(1)} \to H_\ast^{(2)}, \qquad \rho_\ast \colon L_\ast^{(1)} \to L_\ast^{(2)}
\]
making the obvious diagram commute.

We may also consider six-term exact sequences with certain distinguished elements, which in our case will always be elements in $x_i \in L_0^{(i)}$ and $y_i \in G_0^{(i)}$ for $i=1,2$, and will correspond to the classes of the units in our $K_0$-groups. If this is the case, we only consider homomorphisms such that $\rho_0(x_1) = x_2$ and $\phi_0(y_1) = y_2$.

If $G_\ast^{(1)} = G_\ast^{(2)} =: G_\ast$ and $H_\ast^{(1)} = H_\ast^{(2)} =: H_\ast$ then we say that $\mathsf x^{(1)}$ and $\mathsf x^{(2)}$ are \emph{congruent}, written $\mathsf x^{(1)} \equiv \mathsf x^{(2)}$, if there exists a homomorphism of the form $(\id_{H_\ast}, \rho_\ast, \id_{G_\ast}) \colon \mathsf x^{(1)} \to \mathsf x^{(2)}$.
Note that by the five lemma, this forces $\rho_\ast$ to be an isomorphism, but in general many different $\rho_\ast$ can implement a congruence.

If any of the groups in the six-term exact sequences contain distinguished elements, we require that our homomorphisms preserve these elements. In particular, when considering congruence with $x_i \in L_0^{(i)}$ and $y_i \in G_0^{(i)} = G_0$ being our distinguished elements, we only consider the case $y_1 = y_2$.

\begin{definition}
For an extension $\mathfrak e : 0 \to \mathfrak B \to \mathfrak E \to \mathfrak A \to 0$ of (unital) $C^\ast$-algebras, we let $\Ksixnu(\mathfrak e)$ (resp.~$\Ksix(\mathfrak e)$) denote the six-term exact sequence in $K$-theory (resp.~with distinguished elements $[1_\mathfrak{E}] \in K_0(\mathfrak E)$ and $[1_{\mathfrak A}] \in K_0(\mathfrak A)$). 
\end{definition}

Note that two extensions $\mathfrak e$ and $\mathfrak f$ can \emph{only} have congruent six-term exact sequences, if the two ideals are \emph{equal} and the two quotients are \emph{equal} (isomorphisms are not enough for the definition to make sense). So both extensions \emph{have to} be extensions of $\mathfrak A$ by $\mathfrak B$ for the definition of congruence to make sense.

The following two lemmas are well-known, but we fill in the proofs for completeness.

\begin{lemma}\label{l:sucong}
Let $\mathfrak e_1$ and $\mathfrak e_2$ be unital extensions of $\mathfrak A$ by $\mathfrak B$ which are strongly unitarily equivalent. Then $\Ksix(\mathfrak e_1) \equiv \Ksix(\mathfrak e_2)$.
\end{lemma}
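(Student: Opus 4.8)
The plan is to feed the commutative diagram \eqref{eq:suediag}, which records the strong unitary equivalence, into the naturality of the six-term exact sequence in $K$-theory, and then to recognise the resulting homomorphism of six-term sequences as a congruence preserving the distinguished unit classes.

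First I would unwind the hypothesis: $\mathfrak e_1 \sim_\s \mathfrak e_2$ yields a unitary $u \in \multialg{\mathfrak B}$ with $\Ad \pi_{\mathfrak B}(u) \circ \tau_1 = \tau_2$, where $\tau_i$ is the Busby map of $\mathfrak e_i$. Identifying each $\mathfrak E_i$ with the pull-back $\mathfrak A \oplus_{\tau_i, \pi_{\mathfrak B}} \multialg{\mathfrak B}$ (harmless, since the canonical isomorphism of extensions restricts to $\id_{\mathfrak B}$ and $\id_{\mathfrak A}$ and is unital, hence already induces a congruence), the diagram \eqref{eq:suediag} exhibits a morphism of extensions $\mathfrak e_1 \to \mathfrak e_2$ whose vertical maps are $\Ad u$ on the ideal $\mathfrak B$, the isomorphism $\Ad(1_{\mathfrak A}\oplus u)$ on the extension algebra, and $\id_{\mathfrak A}$ on the quotient. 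Applying the six-term $K$-theory sequence to this ladder gives, by naturality, a homomorphism $(\psi_\ast,\rho_\ast,\phi_\ast)\colon \Ksix(\mathfrak e_1) \to \Ksix(\mathfrak e_2)$ with $\psi_\ast = (\Ad u)_\ast$ on $K_\ast(\mathfrak B)$, with $\rho_\ast$ induced by $\Ad(1_{\mathfrak A}\oplus u)$, and with $\phi_\ast = \id$ on $K_\ast(\mathfrak A)$.

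The one non-formal ingredient is the classical fact that an inner automorphism acts as the identity on $K$-theory, so that $\psi_\ast = (\Ad u)_\ast = \id_{K_\ast(\mathfrak B)}$; for $K_0$ this follows from the rotation homotopy connecting $\operatorname{diag}(u,u^\ast)$ to $1$ inside $\mathcal U(M_2(\multialg{\mathfrak B}))$, and the $K_1$ case is analogous. Granting this, $(\psi_\ast,\rho_\ast,\phi_\ast) = (\id,\rho_\ast,\id)$ has exactly the form required for a congruence. It remains to track the distinguished elements: $\phi_0([1_{\mathfrak A}]) = [1_{\mathfrak A}]$ is immediate, and since $\Ad(1_{\mathfrak A}\oplus u)$ is unital --- indeed $\Ad(1_{\mathfrak A}\oplus u)(1_{\mathfrak A}\oplus 1) = 1_{\mathfrak A}\oplus uu^\ast = 1_{\mathfrak A}\oplus 1$ --- we get $\rho_0([1_{\mathfrak E_1}]) = [1_{\mathfrak E_2}]$. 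Hence $\Ksix(\mathfrak e_1) \equiv \Ksix(\mathfrak e_2)$.

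I do not foresee any real obstacle: apart from the triviality of inner automorphisms on $K$-theory the argument is pure bookkeeping with the pull-back model of extensions and functoriality of the six-term sequence. The only subtlety to keep an eye on is that the middle vertical map must be unital for the unit class in $K_0$ of the extension algebra to be transported correctly, which the displayed computation settles.
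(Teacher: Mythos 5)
Your argument is correct and is essentially the paper's own proof: apply $K$-theory to the ladder \eqref{eq:suediag} and use that $K_\ast(\Ad u) = \id_{K_\ast(\mathfrak B)}$, noting that the middle map $\Ad(1_{\mathfrak A}\oplus u)$ is unital so the distinguished classes are preserved. The extra bookkeeping you supply (the pull-back identification and the explicit check on the unit) is exactly what the paper leaves implicit.
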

\begin{proof}
If $u\in \multialg{\mathfrak B}$ implements the strong unitary equivalence, then applying $K$-theory to the diagram \eqref{eq:suediag} and using that $K_\ast(\Ad u) = \id_{K_\ast(\mathfrak B)} \colon K_\ast(\mathfrak B) \to K_\ast(\mathfrak B)$, one obtains a congruence $\Ksix(\mathfrak e_1) \equiv \Ksix(\mathfrak e_2)$.
\end{proof}

\begin{lemma}\label{l:0cong}
Let $\mathfrak A$ and $\mathfrak B$ be $C^\ast$-algebras with $\mathfrak A$ unital and $\mathfrak B$ stable. Let $\mathfrak e \colon 0 \to \mathfrak B \to \mathfrak E \to \mathfrak A \to 0$ be a unital extension, and let $\mathfrak e_0$ be a trivial unital extension of $\mathfrak A$ by $\mathfrak B$. Then $\Ksix(\mathfrak e)$ and $\Ksix(\mathfrak e \oplus \mathfrak e_0)$ are congruent.
\end{lemma}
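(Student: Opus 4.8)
The plan is to realise the asserted congruence by an explicit unital $\ast$-homomorphism between the two extension algebras. Since the Cuntz sum $\mathfrak e \oplus \mathfrak e_0$ is well defined only up to $\sim_\s$, and $\sim_\s$ yields congruent $\Ksix$ by Lemma \ref{l:sucong}, I am free to fix a convenient pair of $\mathcal O_2$-isometries $s_1,s_2\in\multialg{\mathfrak B}$ and work with $\mathfrak e\oplus_{s_1,s_2}\mathfrak e_0$. Writing the extensions as pull-backs, $\mathfrak E = \mathfrak A\oplus_{\tau_{\mathfrak e},\pi_{\mathfrak B}}\multialg{\mathfrak B}$ and $\mathfrak E' = \mathfrak A\oplus_{\tau,\pi_{\mathfrak B}}\multialg{\mathfrak B}$ with $\tau = \Ad\pi_{\mathfrak B}(s_1)\circ\tau_{\mathfrak e} + \Ad\pi_{\mathfrak B}(s_2)\circ\tau_0$, and using that triviality of $\mathfrak e_0$ provides a unital $\ast$-homomorphic lift $\widehat\rho\colon\mathfrak A\to\multialg{\mathfrak B}$ of its Busby map $\tau_0$, I would define
\[
\Phi\colon\mathfrak E\to\mathfrak E',\qquad \Phi(a,m) = \bigl(a,\ s_1 m s_1^\ast + s_2\widehat\rho(a)s_2^\ast\bigr).
\]
A direct computation with the relations $s_i^\ast s_j = \delta_{ij}1$ shows that $\Phi$ is a well-defined $\ast$-homomorphism into $\mathfrak E'$ (the cross terms vanish), that it is unital (since $s_1 s_1^\ast + s_2\widehat\rho(1_{\mathfrak A})s_2^\ast = s_1 s_1^\ast + s_2 s_2^\ast = 1$), and that it fits into a commutative diagram of extensions
\[
\xymatrix{
\mathfrak e\colon & 0 \ar[r] & \mathfrak B \ar[d]^{\Ad s_1} \ar[r] & \mathfrak E \ar[d]^\Phi \ar[r] & \mathfrak A \ar@{=}[d] \ar[r] & 0 \\
\mathfrak e\oplus\mathfrak e_0\colon & 0 \ar[r] & \mathfrak B \ar[r] & \mathfrak E' \ar[r] & \mathfrak A \ar[r] & 0
}
\]
with exact rows, where $\Ad s_1\colon\mathfrak B\to\mathfrak B$ is $b\mapsto s_1 b s_1^\ast$.

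Next I would apply $K$-theory to this diagram. This produces a morphism of six-term exact sequences $\Ksixnu(\mathfrak e)\to\Ksixnu(\mathfrak e\oplus\mathfrak e_0)$ that equals $K_\ast(\Ad s_1)$ on the ideal groups, $K_\ast(\Phi)$ on the middle groups, and the identity on the quotient groups. Provided $K_\ast(\Ad s_1) = \id_{K_\ast(\mathfrak B)}$, this morphism has the form $(\id,K_\ast(\Phi),\id)$, and hence is a congruence precisely because $K_\ast(\Phi)$ is an isomorphism — which follows from the five lemma. Since $\Phi$ is unital, $K_0(\Phi)[1_{\mathfrak E}] = [1_{\mathfrak E'}]$, and the quotient leg being the identity preserves $[1_{\mathfrak A}]$, so the congruence respects the distinguished elements of $\Ksix$, giving $\Ksix(\mathfrak e)\equiv\Ksix(\mathfrak e\oplus\mathfrak e_0)$.

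The only step that is not a formality, and the one I expect to be the main obstacle, is the identity $K_\ast(\Ad s_1) = \id_{K_\ast(\mathfrak B)}$: it is the isometry analogue of the unitary fact $K_\ast(\Ad u)=\id$ already used in Lemma \ref{l:sucong}, but a little more delicate since $s_1$ is not invertible. This is exactly where choosing $s_1,s_2$ well is useful. Identifying $\mathfrak B$ with $\mathfrak B\otimes\mathbb K$ by stability and taking $s_i = 1\otimes V_i$ for isometries $V_1,V_2\in\mathcal B(\ell^2)$ with $V_1 V_1^\ast + V_2 V_2^\ast = 1$, one has $\Ad s_1 = \id_{\mathfrak B}\otimes\Ad V_1$, and under the canonical identification $K_\ast(\mathfrak B\otimes\mathbb K)\cong K_\ast(\mathfrak B)$ this map is the identity, because $\Ad V_1$ carries one rank-one corner of $\mathbb K$ to another and all corner embeddings $\mathfrak B\otimes e_{jj}\hookrightarrow\mathfrak B\otimes\mathbb K$ induce the same map on $K$-theory. (Alternatively, one may simply invoke the well-known fact that conjugation by an isometry in the multiplier algebra of a stable $C^\ast$-algebra acts as the identity on $K$-theory.) Everything else — that $\Phi$ is a unital $\ast$-homomorphism and that the diagram commutes — is a routine verification.
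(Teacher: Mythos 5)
Your proposal is correct and follows essentially the same route as the paper: the paper defines the same unital $\ast$-homomorphism $\Psi(y)=\pi(y)\oplus(s_1\sigma(y)s_1^\ast+s_2\phi(\pi(y))s_2^\ast)$ into the Cuntz-sum pull-back, with $s_1(-)s_1^\ast$ on the ideal, and concludes by applying $K$-theory and the fact that $(s_1(-)s_1^\ast)_\ast=\id_{K_\ast(\mathfrak B)}$. The only difference is that you spell out a justification for that last $K$-theoretic fact, which the paper simply invokes.
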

\begin{proof}
Let $s_1,s_2\in \multialg{\mathfrak B}$ be $\mathcal O_2$-isometries so that $\mathfrak e \oplus \mathfrak e_0 = \mathfrak e \oplus_{s_1,s_2} \mathfrak e_0$. Let $\pi \colon \mathfrak E \to \mathfrak A$ be the quotient map, $\sigma \colon \mathfrak E \to \multialg{\mathfrak B}$ be the canonical unital $\ast$-homomorphism, and $\phi \colon \mathfrak A \to \multialg{\mathfrak B}$ be a unital $\ast$-homo\-morphism which lifts $\tau_0$.

The extension algebra $\mathfrak F$ of $\mathfrak e \oplus_{s_1,s_2} \mathfrak e_0$ is by definition
\[
\mathfrak F = \{ a \oplus m \in \mathfrak A \oplus \multialg{\mathfrak B} : \pi_{\mathfrak B}(s_1) \tau(a) \pi_{\mathfrak B}(s_1)^\ast + \pi_{\mathfrak B}(s_2) \tau_0(a) \pi_{\mathfrak B}(s_2)^\ast = \pi_{\mathfrak B}(m) \}.
\]
Define the unital $\ast$-homo\-morphism $\Psi \colon \mathfrak E \to \mathfrak F$ by
\[
\Psi(y) = \pi(y) \oplus (s_1 \sigma(y) s_1^\ast + s_2 \phi(\pi(y)) s_2^\ast).
\]
This is clearly well-defined, and induces a unital $\ast$-homomorphism of extensions by
\[
\xymatrix{
\mathfrak e : & 0 \ar[r] & \mathfrak B \ar[d]_{s_1(-)s_1^\ast} \ar[r] & \mathfrak E \ar[d]_\Psi \ar[r] & \mathfrak A \ar@{=}[d] \ar[r] & 0 \\
\mathfrak e\oplus_{s_1,s_2} \mathfrak e_0 : & 0 \ar[r] & \mathfrak B \ar[r] & \mathfrak F \ar[r] & \mathfrak A \ar[r] & 0.
}
\]
As $(s_1(-)s_1^\ast)_\ast = \id_{K_\ast(\mathfrak B)} \colon K_\ast(\mathfrak B) \to K_\ast(\mathfrak B)$, applying $K$-theory to the above diagram induces a congruence $\Ksix(\mathfrak e) \equiv \Ksix(\mathfrak e\oplus_{s_1,s_2} \mathfrak e_0)$.
\end{proof}

\begin{corollary}\label{c:KsixExt}
Let $\mathfrak A$ and $\mathfrak B$ be separable $C^\ast$-algebras with $\mathfrak A$ unital and $B$ stable. Suppose that $\mathfrak e_1$ and $\mathfrak e_2$ are unital extensions of $\mathfrak A$ by $\mathfrak B$ for which $[\mathfrak e_1]_\s = [\mathfrak e_2]_\s$ in $\Ext_{\us}(\mathfrak A, \mathfrak B)$. Then $\Ksix(\mathfrak e_1) \equiv \Ksix(\mathfrak e_2)$.
\end{corollary}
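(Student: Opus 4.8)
The plan is to unwind the definition of $[\mathfrak e_1]_\s = [\mathfrak e_2]_\s$ and then chain together the two preceding lemmas. By definition of $\Ext_\us(\mathfrak A, \mathfrak B)$, the hypothesis gives trivial unital extensions $\mathfrak f_1, \mathfrak f_2$ of $\mathfrak A$ by $\mathfrak B$ with $\mathfrak e_1 \oplus \mathfrak f_1 \sim_\s \mathfrak e_2 \oplus \mathfrak f_2$. I will first observe that $\oplus$ here may be taken to be $\oplus_{s_1,s_2}$ for a fixed choice of $\mathcal O_2$-isometries, since the construction is well-defined up to $\sim_\s$, and that $\mathfrak e_i \oplus \mathfrak f_i$ is again a unital extension of $\mathfrak A$ by $\mathfrak B$ (the Busby map is unital, being a compression-sum of unital maps).

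Next I would record that congruence $\equiv$ of six-term exact sequences (with distinguished elements) is an equivalence relation: reflexivity and transitivity are immediate from taking identities and composing the middle maps $\rho_\ast$, and symmetry follows from the five lemma, which forces the implementing $\rho_\ast$ to be an isomorphism whose inverse implements the reverse congruence. All the maps involved respect the distinguished classes $[1_{\mathfrak E}]$ and $[1_{\mathfrak A}]$ automatically, since these are part of the data of $\Ksix$.

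Then the corollary follows by the computation
\[
\Ksix(\mathfrak e_1) \equiv \Ksix(\mathfrak e_1 \oplus \mathfrak f_1) \equiv \Ksix(\mathfrak e_2 \oplus \mathfrak f_2) \equiv \Ksix(\mathfrak e_2),
\]
where the outer two congruences are Lemma \ref{l:0cong} (applied with $\mathfrak e = \mathfrak e_i$ and trivial unital extension $\mathfrak e_0 = \mathfrak f_i$; here I use that $\mathfrak A$ is unital and $\mathfrak B$ is stable so the lemma applies) and the middle congruence is Lemma \ref{l:sucong} applied to the strongly unitarily equivalent unital extensions $\mathfrak e_1 \oplus \mathfrak f_1$ and $\mathfrak e_2 \oplus \mathfrak f_2$.

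There is no real obstacle here; the only points requiring a word of care are that $\mathfrak e_i \oplus \mathfrak f_i$ is genuinely a \emph{unital} extension of $\mathfrak A$ by $\mathfrak B$ (so that both lemmas are applicable) and that the relation $\equiv$ is transitive, both of which are routine. If one wanted to be completely explicit, one could instead just exhibit the composite middle map: the congruence is implemented by $\rho_\ast^{(2)} \circ \rho_\ast^{(\mathrm{sue})} \circ (\rho_\ast^{(1)})^{-1} \colon K_\ast(\mathfrak E_1) \to K_\ast(\mathfrak E_2)$, where $\rho_\ast^{(i)}$ comes from the map $\Psi$ of Lemma \ref{l:0cong} for $\mathfrak e_i$ and $\rho_\ast^{(\mathrm{sue})}$ comes from $\Ad(1_{\mathfrak A}\oplus u)$ as in \eqref{eq:suediag}, but invoking the equivalence relation structure is cleaner.
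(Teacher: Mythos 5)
Your proof is correct and follows exactly the paper's argument: unwind the definition of $[\mathfrak e_1]_\s = [\mathfrak e_2]_\s$ to get trivial unital extensions $\mathfrak f_1, \mathfrak f_2$ with $\mathfrak e_1 \oplus \mathfrak f_1 \sim_\s \mathfrak e_2 \oplus \mathfrak f_2$, then chain Lemmas \ref{l:0cong} and \ref{l:sucong}. The extra care you take about $\equiv$ being an equivalence relation and about $\mathfrak e_i \oplus \mathfrak f_i$ being a unital extension is sound but is left implicit in the paper.
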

\begin{proof}
By definition of $\Ext_{\us}$, there are trivial, unital extensions $\mathfrak f_1,\mathfrak f_2$, such that $\mathfrak e_1 \oplus \mathfrak f_1$ and $\mathfrak e_2 \oplus \mathfrak f_2$ are strongly unitarily equivalent. Hence the result follows from Lemmas \ref{l:sucong} and \ref{l:0cong}.
\end{proof}

\begin{lemma}\label{l:moveunit}
  Let $\mathfrak e : 0 \to \mathfrak B \xrightarrow{\iota} \mathfrak E \xrightarrow{\pi} \mathfrak A \to 0$ be a unital extension $C^\ast$-algebras with boundary map $\delta_\ast \colon K_\ast(\mathfrak A) \to K_{1-\ast}(\mathfrak B)$ in $K$-theory, let $u\in \corona{\mathfrak B}$ be a unitary, and let $\chi_1 \colon K_1(\corona{\mathfrak B}) \to K_0(\mathfrak B)$ denote the index map in $K$-theory.
 Then $\Ksix(\Ad u \cdot \mathfrak e)$ (see Definition \ref{d:pbpo}) is congruent to
 \[
  \xymatrix{
   K_0(\mathfrak B) \ar[r]^{\iota_0 \qquad \quad \quad} & (K_0(\mathfrak E), [1_{\mathfrak E}] + \iota_0(\chi_1([u]))) \ar[r]^{\qquad \quad \pi_0} & (K_0(\mathfrak A), [1_\mathfrak{A}]) \ar[d]^{\delta_0} \\
   K_1(\mathfrak A) \ar[u]^{\delta_1} & K_1(\mathfrak E) \ar[l]_{\pi_1} & K_1(\mathfrak B). \ar[l]_{\iota_1}
  }
 \]
\end{lemma}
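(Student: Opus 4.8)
The plan is to compare $\Ad u \cdot \mathfrak e$ with $\mathfrak e$ by an explicit $\ast$-homomorphism after passing to $2\times 2$ matrices; the correction term $\iota_0(\chi_1([u]))$ will be forced precisely because there is in general no $\ast$-homomorphism between the two extension algebras. Write $\tau \colon \mathfrak A \to \corona{\mathfrak B}$ for the Busby map of $\mathfrak e$, so that $\Ad u \cdot \mathfrak e$ has Busby map $\tau_u := \Ad u \circ \tau$ and extension algebra $\mathfrak E_u = \mathfrak A \oplus_{\tau_u, \pi_{\mathfrak B}} \multialg{\mathfrak B}$ with unit $1_{\mathfrak A} \oplus 1_{\multialg{\mathfrak B}}$. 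First I would note that $\Ad u$ is an \emph{inner} automorphism of $\corona{\mathfrak B}$, hence induces the identity on $K_\ast(\corona{\mathfrak B})$; since the boundary maps of an extension are obtained by composing the map induced by its Busby map with the index and exponential maps of $0 \to \mathfrak B \to \multialg{\mathfrak B} \xrightarrow{\pi_{\mathfrak B}} \corona{\mathfrak B} \to 0$, it follows that $\Ad u \cdot \mathfrak e$ and $\mathfrak e$ have the \emph{same} boundary maps $\delta_\ast$. Thus the two six-term exact sequences already agree on the corner groups and on $\delta_\ast$, and only the middle group, the maps into and out of it, and the position of the unit remain to be compared.

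Next I would construct the comparison map. Since $\mathrm{diag}(u^\ast, u)$ lies in $\mathcal U_0(M_2(\corona{\mathfrak B}))$, a path of unitaries joining it to $1$ lifts to a path of unitaries in $M_2(\multialg{\mathfrak B}) = \multialg{M_2(\mathfrak B)}$, so we may choose a unitary $V \in M_2(\multialg{\mathfrak B})$ with $M_2(\pi_{\mathfrak B})(V) = \mathrm{diag}(u^\ast, u)$. Let $w = V e_1$ be the first column of $V$, so $w^\ast w = 1$ and $M_2(\pi_{\mathfrak B})$ sends $w$ to $(u^\ast, 0)^{\mathrm t}$. Then
\[
\Theta \colon \mathfrak E_u \longrightarrow M_2(\mathfrak E) = M_2(\mathfrak A) \oplus_{M_2(\tau),\, M_2(\pi_{\mathfrak B})} M_2(\multialg{\mathfrak B}), \qquad \Theta(x \oplus m) = \mathrm{diag}(x, 0) \oplus w m w^\ast,
\]
is a $\ast$-homomorphism because $w^\ast w = 1$, and it is well defined because for $x \oplus m \in \mathfrak E_u$ one has $M_2(\pi_{\mathfrak B})(w m w^\ast) = \mathrm{diag}(u^\ast \pi_{\mathfrak B}(m) u, 0) = \mathrm{diag}(u^\ast u \tau(x) u^\ast u, 0) = \mathrm{diag}(\tau(x), 0) = M_2(\tau)(\mathrm{diag}(x, 0))$. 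On the ideal $\mathfrak B$, $\Theta$ is $b \mapsto V\, \mathrm{diag}(b, 0)\, V^\ast$, which agrees on $K$-theory with the upper-left corner inclusion $\mathfrak B \hookrightarrow M_2(\mathfrak B)$ because conjugation by a multiplier unitary acts trivially on $K$-theory, and on quotients $\Theta$ induces the corner inclusion $\mathfrak A \hookrightarrow M_2(\mathfrak A)$. Composing the map induced by $\Theta$ with the inverses of the corner isomorphisms $K_\ast(\mathfrak E) \cong K_\ast(M_2(\mathfrak E))$ (and likewise for $\mathfrak A$ and $\mathfrak B$), and using the first paragraph for the $\delta$-squares, produces $\rho_\ast \colon K_\ast(\mathfrak E_u) \to K_\ast(\mathfrak E)$ such that $(\id_{K_\ast(\mathfrak B)}, \rho_\ast, \id_{K_\ast(\mathfrak A)})$ is a homomorphism of six-term exact sequences; the five lemma then makes each $\rho_\ast$ an isomorphism, so this is already a congruence of the underlying (unpointed) six-term sequences.

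It remains to track the unit. We have $\Theta(1_{\mathfrak E_u}) = \mathrm{diag}(1_{\mathfrak A}, 0) \oplus w w^\ast$ with $w w^\ast = V e_{11} V^\ast$, where $e_{11} = \mathrm{diag}(1_{\multialg{\mathfrak B}}, 0)$. Since $M_2(\pi_{\mathfrak B})(V e_{11} V^\ast) = M_2(\pi_{\mathfrak B})(e_{11})$, we get $V e_{11} V^\ast - e_{11} \in M_2(\mathfrak B)$, and by the very definition of the index map the corresponding class in $K_0(M_2(\mathfrak B)) \cong K_0(\mathfrak B)$ is $\chi_1([u])$; this is the one step where the orientation convention for $\chi_1$ must be matched against the choice of $V$ lifting $\mathrm{diag}(u^\ast, u)$, a different convention only flipping an overall sign. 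As $\mathrm{diag}(1_{\mathfrak A}, 0) \oplus e_{11}$ is exactly the image of $1_{\mathfrak E}$ under the corner inclusion $\mathfrak E \hookrightarrow M_2(\mathfrak E)$, the difference-of-projections computation gives $[\Theta(1_{\mathfrak E_u})] = [\,\mathrm{diag}(1_{\mathfrak E},0)\,] + j_0(\chi_1([u]))$ in $K_0(M_2(\mathfrak E))$, where $j \colon M_2(\mathfrak B) \to M_2(\mathfrak E)$ is the ideal inclusion; applying the inverse corner isomorphism yields $\rho_0([1_{\mathfrak E_u}]) = [1_{\mathfrak E}] + \iota_0(\chi_1([u]))$, which is precisely the distinguished element in the displayed diagram. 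Hence $(\id, \rho_\ast, \id)$ is the asserted congruence of pointed six-term exact sequences.

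The main obstacle is the interaction of the last two paragraphs: there is no $\ast$-homomorphism $\mathfrak E_u \to \mathfrak E$ in general — its absence is exactly what produces the correction term — so the passage to $M_2$ with a unitary lift of $u^\ast \oplus u$ is unavoidable, and the genuinely delicate point is the bookkeeping identifying the correction with $\chi_1([u]) \in K_0(\mathfrak B)$ and pinning down its sign against the paper's orientation of the index map. The argument uses neither nuclearity of $\mathfrak A$ nor stability of $\mathfrak B$; everything else is routine diagram chasing.
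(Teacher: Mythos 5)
Your proof is correct and is essentially the paper's argument with the roles of $\mathfrak e$ and $\Ad u \cdot \mathfrak e$ interchanged: the paper conjugates $\mathfrak E$ by the column $v_c$ of a partial-isometry lift of $u$ into a common $2\times 2$ pull-back in which $\mathfrak E_u$ sits as a corner, while you conjugate $\mathfrak E_u$ by the first column of a unitary lift of $u^\ast \oplus u$ into $M_2(\mathfrak E)$ and compare with the corner copy of $\mathfrak E$. The sign you leave open does come out right: writing $W$ for the standard Whitehead unitary lift of $u \oplus u^\ast$ built from a contractive lift $a$ of $u$, one has $vv^\ast = W e_{11} W^\ast$, so the paper's normalisation \eqref{eq:chi1u1} reads $\chi_1([u]) = [W e_{22} W^\ast] - [e_{22}]$, and conjugating by the flip identifies this with $[V e_{11} V^\ast] - [e_{11}]$ for any unitary lift $V$ of $u^\ast \oplus u$ --- which is exactly your correction term.
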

\begin{proof}

Let $a\in \multialg{\mathfrak B}$ be a lift of $u$ with $\| a\| = 1$, and define
\[
v := \left( \begin{array}{cc} a & 0 \\ (1-a^\ast a)^{1/2} & \; 0 \end{array} \right) \in M_2(\multialg{\mathfrak B}), \qquad v_c := \left( \begin{array}{c} a \\ (1-a^\ast a)^{1/2} \end{array} \right) \in M_{2,1}(\multialg{\mathfrak B}).
\]
Then $v$ is a partial isometry for which $v^\ast v = 1_{\multialg{\mathfrak B}} \oplus 0$. It is well-known, see e.g.~\cite[Section 9.2]{Rordam-book-K-theory}, that 
\begin{equation}\label{eq:chi1u1}
\chi_1([u]) = [1_{M_2(\tilde{\mathfrak B})} - v v^\ast] - [0\oplus 1_{\widetilde{\mathfrak B}}] \in K_0(\mathfrak B).
\end{equation}
Let $\tau$ denote the Busby map of $\mathfrak e$, and identify $\mathfrak E$ with the pull-back $\mathfrak A \oplus_{\tau, \pi_{\mathfrak B}} \multialg{\mathfrak B}$. Define
\[
\mathfrak E_2 := \{ a\oplus y \in \mathfrak A \oplus M_2(\multialg{\mathfrak B}) : (\Ad u \circ \tau (a)) \oplus 0 = M_2(\pi_{\mathfrak B})(y) \in M_2(\corona{\mathfrak B})\},
\]
i.e.~$\mathfrak E_2$ is the pull-back $\mathfrak A \oplus_{(\Ad u \circ \tau)\oplus 0, M_2(\pi_{\mathfrak B})} M_2(\multialg{\mathfrak B})$.
We obtain an embedding
\[
\Ad (1 \oplus v_c) \colon \mathfrak A \oplus_{\tau , \pi_{\mathfrak B}} \multialg{\mathfrak B} \to \mathfrak E_2.
\]
Similarly, identify the extension algebra $\mathfrak E_u$ of $\Ad u \cdot \mathfrak e$ with the pull-back $\mathfrak A \oplus_{\Ad u \circ \tau, \pi_{\mathfrak B}} \multialg{\mathfrak B}$. The embedding $\multialg{\mathfrak B} \to M_2(\multialg{\mathfrak B})$ into the $(1,1)$-corner induces an embedding
\[
\id_{\mathfrak A} \oplus j \colon \mathfrak A \oplus_{\Ad u \circ \tau, \pi_{\mathfrak B}} \multialg{\mathfrak B} \to \mathfrak E_2.
\]
We get the following diagram where all rows are short exact sequences and all maps are $\ast$-homomorphisms
\[
\xymatrix{
0 \ar[r] & \mathfrak B \ar[d]^{\Ad v_c} \ar[r]^\iota & \mathfrak E \ar[d]^{\Ad (1\oplus v_c)} \ar[rr]^{\pi} && \mathfrak A \ar@{=}[d] \ar[r] & 0 \\
0 \ar[r] & M_2(\mathfrak B) \ar[r]^{\quad \iota^{(2)}} & \mathfrak E_2 \ar[rr] && \mathfrak A \ar[r] & 0 \\
0 \ar[r] & \mathfrak B \ar[r] \ar[u]_{j} & \mathfrak E_u \ar[rr] \ar[u]_{\id_{\mathfrak A} \oplus j} && \mathfrak A \ar[r] \ar@{=}[u] & 0.
}
\]
Note that $(\Ad v_c)_\ast, j_\ast \colon K_\ast(\mathfrak B) \to K_\ast(M_2(\mathfrak B))$ are the same map, namely the canonical isomorphism. In particular, by considering the induced maps of six-term exact sequences, the five lemma implies that $(\id_{\mathfrak A} \oplus j)_\ast \colon K_\ast(\mathfrak E_u) \to K_\ast(\mathfrak E_2)$ and $\Ad(1\oplus v_c)_\ast \colon K_\ast(\mathfrak E) \to K_\ast(\mathfrak E_2)$ are isomorphisms. As $(\Ad v_c)_\ast = j_\ast$, it follows that
\[
\Ad (1\oplus v_c)_\ast^{-1} \circ (\id_{\mathfrak A} \oplus j)_\ast \colon K_\ast(\mathfrak E_u) \to K_\ast(\mathfrak E)
\]
induces a congruence $\Ksixnu(\Ad u \cdot \mathfrak e) \equiv \Ksixnu(\mathfrak e)$ which does not necessarily preserve the class of the unit since $\Ad(1\oplus v_c)$ and $\id_\mathfrak{A} \oplus j$ are not unital maps. Thus it remains to prove that 
\[
\Ad (1\oplus v_c)_0^{-1} ((\id_{\mathfrak A} \oplus j)_0([1_{\mathfrak E_u}])) = [1_{\mathfrak E}] + \iota_0(\chi_1([u])),
\]
or alternatively, that
\begin{equation}\label{eq:Ad1oplusvc}
(\Ad (1\oplus v_c))_0 ([1_{\mathfrak E}] + \iota_0(\chi_1([u]))) = (\id_{\mathfrak A} \oplus j)_0([1_{\mathfrak E_u}]) = [1_{\mathfrak A}\oplus (1_{\multialg{\mathfrak B}} \oplus 0)] \in K_0(\mathfrak E_2).
\end{equation}
Note that the unitisation 
\[
\widetilde{\mathfrak E}_2 = \mathfrak E_2 + \mathbb C(0_{\mathfrak A} \oplus (0\oplus 1_{\multialg{\mathfrak B}})) \subseteq \mathfrak A \oplus M_2(\multialg{\mathfrak B}).
\]
As $(\Ad v_c)_0=j_0 \colon K_0(\mathfrak B) \xrightarrow \cong K_0(M_2(\mathfrak B))$ is the canonical isomorphism, it follows from \eqref{eq:chi1u1} (using that $1_{\mathfrak A} \oplus vv^\ast \in \mathfrak E_2$) that
\begin{eqnarray}
\Ad (1\oplus v_c)_0 \circ  \iota_0(\chi_1([u])) &=&  \iota_0^{(2)} \circ j_0 (\chi_1([u])) \nonumber\\
&=& [1_{\widetilde{\mathfrak E}_2} - (1_\mathfrak{A} \oplus v v^\ast)] - [0_{\mathfrak A} \oplus (0\oplus 1_{\multialg{\mathfrak B}})] \nonumber\\
&=& [1_{\mathfrak A} \oplus (1_{\multialg{\mathfrak B}} \oplus 0)] - [1_{\mathfrak A} \oplus vv^\ast] \in K_0(\mathfrak E_2). \label{eq:Ad1oplusvc2}
\end{eqnarray}
Clearly 
\[
\Ad (1\oplus v_c)_0([1_{\mathfrak E}]) = [1_{\mathfrak A} \oplus v_c v_c^\ast] = [1_{\mathfrak A} \oplus v v^\ast] \in K_0(\mathfrak E_2),
\]
and combining this with \eqref{eq:Ad1oplusvc2} yields \eqref{eq:Ad1oplusvc}.
\end{proof}

Recall that if $L_1,L_2$ and $G$ are abelian groups and $\phi_i \colon L_i \to G$ are homomorphisms, then
\[
L_1 \oplus_{\phi_1,\phi_2} L_2 = \{ x_1 \oplus x_2 \in L_1 \oplus L_2 : \phi_1(x_1) = \phi_2(x_2)\}
\]
is the pull-back. When there is no doubt of what the maps $\phi_i$ are, we simply write $L_1 \oplus_G L_2$ instead of $L_1 \oplus_{\phi_1,\phi_2} L_2$.

\begin{remark}\label{r:Baersum}
Recall that if $\mathsf x_i : 0 \to H \xrightarrow{\iota^{(i)}} L_i \xrightarrow{\pi^{(i)}} G \to 0$ are extensions of abelian groups for $i=1,2$, then their \emph{Baer sum} $\mathsf x_1 \oplus \mathsf x_2$ is the extension given by
\[
0 \to H \xrightarrow{\iota^{(1)}} \frac{L_1 \oplus_G L_2}{\{ (\iota^{(1)} (x), - \iota^{(2)}(x)) : x \in H\}} \xrightarrow{\pi^{(1)}} G \to 0.
\]
Addition in the group $\Ext(G,H)$ is given by the Baer sum.
\end{remark}

The following proposition is an explicit formula for computing $\Ksix(\mathfrak e_1 \oplus \mathfrak e_2)$ using a similar construction as the Baer sum, when we know that the boundary maps for one of $\mathfrak e_1$ or $\mathfrak e_2$ vanishes.

\begin{proposition}\label{p:sumext}
Let $\mathfrak e_i : 0 \to \mathfrak B  \xrightarrow{\iota^{(i)}} \mathfrak E_i \xrightarrow{\pi^{(i)}} \mathfrak A \to 0$ be unital extensions of $C^\ast$-algebras for $i=1,2$ such that $\mathfrak B$ is stable. 
Let $\delta_\ast^{(i)} \colon K_\ast(\mathfrak A) \to K_{1-\ast}(\mathfrak B)$ denote the boundary map of $\mathfrak e_i$ in $K$-theory for $i=1,2$. If $\delta_\ast^{(2)} = 0$, then $\Ksix(\mathfrak e_1 \oplus  \mathfrak e_2)$ is congruent to
 \[
  \xymatrix{
  K_0(\mathfrak B) \ar[rr]^{\iota_0^{(1)}\qquad \qquad \qquad \quad} && \left( \frac{K_0(\mathfrak E_1) \oplus_{K_0(\mathfrak A)}K_0(\mathfrak E_2)}{\{ (\iota_0^{(1)}(x),-\iota_0^{(2)}(x)) : x\in K_0(\mathfrak B)\}} , [1_{\mathfrak E_1}] \oplus [1_{\mathfrak E_2}] \right) \ar[rr]^{\qquad \qquad \qquad \pi_0^{(1)}} && (K_0(\mathfrak A) \ar[d]^{\delta_0^{(1)}}, [1_{\mathfrak A}]) \\
  K_1(\mathfrak A) \ar[u]^{\delta_1^{(1)}} && \frac{K_1(\mathfrak E_1) \oplus_{K_1(\mathfrak A)}K_1(\mathfrak E_2)}{\{ (\iota_1^{(1)}(y),- \iota_1^{(2)}(y)) : y\in K_1(\mathfrak B)\}} \ar[ll]_{\pi_1^{(1)} \qquad \quad} && K_1(\mathfrak B). \ar[ll]_{\qquad \qquad \iota_1^{(1)}}
  }
 \]
The same result also holds in the not necessarily unital case by removing all units from the statement.
\end{proposition}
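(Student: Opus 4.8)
The plan is to realise the Cuntz sum $\mathfrak e_1\oplus\mathfrak e_2=\mathfrak e_1\oplus_{s_1,s_2}\mathfrak e_2$ as a push-out of a ``fibred product'' extension and then run a diagram chase of six-term sequences. Fix $\mathcal O_2$-isometries $s_1,s_2\in\multialg{\mathfrak B}$, write $\iota^{(i)},\pi^{(i)},\sigma_i,\tau_i$ for the structure maps of $\mathfrak e_i$, and recall that $\mathfrak e_1\oplus\mathfrak e_2$ has extension algebra $\mathfrak F=\mathfrak A\oplus_{\tau,\pi_{\mathfrak B}}\multialg{\mathfrak B}$ with $\tau(a)=\pi_{\mathfrak B}(s_1)\tau_1(a)\pi_{\mathfrak B}(s_1)^\ast+\pi_{\mathfrak B}(s_2)\tau_2(a)\pi_{\mathfrak B}(s_2)^\ast$, ideal map $\iota_{\mathfrak F}(b)=0\oplus b$ and quotient map $\pi_{\mathfrak F}(a\oplus m)=a$. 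Introduce the pull-back $\mathfrak C:=\mathfrak E_1\oplus_{\mathfrak A}\mathfrak E_2=\{(x_1,x_2):\pi^{(1)}(x_1)=\pi^{(2)}(x_2)\}$, which sits in a unital extension $\mathfrak c:0\to\mathfrak B\oplus\mathfrak B\xrightarrow{j}\mathfrak C\xrightarrow{p}\mathfrak A\to 0$ with $j(b_1,b_2)=(\iota^{(1)}(b_1),\iota^{(2)}(b_2))$ and $p(x_1,x_2)=\pi^{(1)}(x_1)$. The coordinate projections $q_i\colon\mathfrak C\to\mathfrak E_i$ give morphisms of extensions $\mathfrak c\to\mathfrak e_i$ which on ideals are the $i$-th coordinate projections and on quotients are $\id_{\mathfrak A}$; by naturality of the six-term sequence the boundary map of $\mathfrak c$ is $(\delta^{(1)}_\ast,\delta^{(2)}_\ast)=(\delta^{(1)}_\ast,0)$. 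Since $\delta^{(2)}_\ast=0$, each $\pi^{(2)}_\ast$ is onto, so the Mayer--Vietoris sequence of the pull-back $\mathfrak C$ (available as $\pi^{(1)},\pi^{(2)}$ are surjective) degenerates, yielding an isomorphism $(q_{1,\ast},q_{2,\ast})\colon K_\ast(\mathfrak C)\xrightarrow{\cong}K_\ast(\mathfrak E_1)\oplus_{K_\ast(\mathfrak A)}K_\ast(\mathfrak E_2)$ which, being compatible with $\mathfrak c\to\mathfrak e_i$, carries $j_\ast$ to $(x_1,x_2)\mapsto(\iota^{(1)}_\ast(x_1),\iota^{(2)}_\ast(x_2))$, $p_\ast$ to $(y_1,y_2)\mapsto\pi^{(1)}_\ast(y_1)$, and $[1_{\mathfrak C}]$ to $[1_{\mathfrak E_1}]\oplus[1_{\mathfrak E_2}]$.

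Next, define $\gamma\colon\mathfrak B\oplus\mathfrak B\to\mathfrak B$ by $\gamma(b_1,b_2)=s_1b_1s_1^\ast+s_2b_2s_2^\ast$ and $\Phi\colon\mathfrak C\to\mathfrak F$ by $\Phi(x_1,x_2)=\pi^{(1)}(x_1)\oplus\big(s_1\sigma_1(x_1)s_1^\ast+s_2\sigma_2(x_2)s_2^\ast\big)$. Using $s_i^\ast s_j=\delta_{ij}1$, $s_1s_1^\ast+s_2s_2^\ast=1$, $\pi_{\mathfrak B}\circ\sigma_i=\tau_i\circ\pi^{(i)}$ and $\sigma_i$ unital, one checks that $\gamma$ is non-degenerate, that $\Phi$ is a well-defined unital $\ast$-homomorphism realising $\mathfrak e_1\oplus\mathfrak e_2$ as the push-out $\gamma\cdot\mathfrak c$, and that $(\gamma,\Phi,\id_{\mathfrak A})$ is a morphism of extensions $\mathfrak c\to\mathfrak e_1\oplus\mathfrak e_2$. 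On $K$-theory $\gamma_\ast\colon K_\ast(\mathfrak B)^2\to K_\ast(\mathfrak B)$ is the addition map, since each corner map $s_i(-)s_i^\ast$ induces the identity as in the proof of Lemma~\ref{l:0cong}; in particular $\gamma_\ast$ is surjective with kernel the antidiagonal $\{(x,-x):x\in K_\ast(\mathfrak B)\}$. The resulting morphism of six-term exact sequences $\Ksixnu(\mathfrak c)\to\Ksixnu(\mathfrak e_1\oplus\mathfrak e_2)$ is thus $\gamma_\ast$ on ideals and $\id_{K_\ast(\mathfrak A)}$ on quotients; consequently $\delta^{\mathfrak e_1\oplus\mathfrak e_2}_\ast=\gamma_\ast\circ(\delta^{(1)}_\ast,0)=\delta^{(1)}_\ast$, and $\Phi_\ast[1_{\mathfrak C}]=[1_{\mathfrak F}]$ as $\Phi$ is unital.

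The crux of the argument — and the step I expect to be the main obstacle — is to show that $\Phi_\ast\colon K_\ast(\mathfrak C)\to K_\ast(\mathfrak F)$ is surjective with kernel $j_\ast(\ker\gamma_\ast)$, which is a diagram chase in this morphism of six-term sequences. For surjectivity: given $z\in K_\ast(\mathfrak F)$, use $\im p_\ast=\ker\delta^{(1)}_\ast=\im\pi_{\mathfrak F,\ast}$ to pick $w_0$ with $p_\ast(w_0)=\pi_{\mathfrak F,\ast}(z)$, so $z-\Phi_\ast(w_0)\in\ker\pi_{\mathfrak F,\ast}=\im\iota_{\mathfrak F,\ast}$; writing $z-\Phi_\ast(w_0)=\iota_{\mathfrak F,\ast}(h)$ and $h=\gamma_\ast(h')$ (as $\gamma_\ast$ is onto) gives $\Phi_\ast(w_0+j_\ast(h'))=z$. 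For the kernel: $j_\ast(\ker\gamma_\ast)\subseteq\ker\Phi_\ast$ holds by commutativity; conversely if $\Phi_\ast(w)=0$ then $p_\ast(w)=0$, so $w=j_\ast(h')$ for some $h'$, and then $\gamma_\ast(h')\in\ker\iota_{\mathfrak F,\ast}=\im\delta^{\mathfrak e_1\oplus\mathfrak e_2}_\ast$. Since $\delta^{\mathfrak e_1\oplus\mathfrak e_2}_\ast=\gamma_\ast\circ\delta^{\mathfrak c}_\ast$ and $\im\delta^{\mathfrak c}_\ast=\ker j_\ast$ by exactness, one may subtract a suitable element of $\ker j_\ast$ from $h'$ to obtain $h''\in\ker\gamma_\ast$ with $j_\ast(h'')=j_\ast(h')=w$; hence $w\in j_\ast(\ker\gamma_\ast)$.

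It follows that $\Phi_\ast$ induces an isomorphism $K_\ast(\mathfrak C)/j_\ast(\ker\gamma_\ast)\xrightarrow{\cong}K_\ast(\mathfrak F)$. Combining with the Mayer--Vietoris identification of the first paragraph and using $\ker\gamma_\ast=\{(x,-x)\}$, the left-hand side is exactly $\big(K_\ast(\mathfrak E_1)\oplus_{K_\ast(\mathfrak A)}K_\ast(\mathfrak E_2)\big)/\{(\iota^{(1)}_\ast(x),-\iota^{(2)}_\ast(x)):x\in K_\ast(\mathfrak B)\}$, with the class of $[1_{\mathfrak E_1}]\oplus[1_{\mathfrak E_2}]$ as distinguished element. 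Finally, one reads off from the identities above that this isomorphism is compatible with the $\iota$-maps ($\Phi_\ast j_\ast(x,0)=\iota_{\mathfrak F,\ast}(x)$), the $\pi$-maps ($\pi_{\mathfrak F,\ast}\Phi_\ast=p_\ast$), and the boundary maps ($\delta^{(1)}_\ast$), and that it sends $[1_{\mathfrak E_1}]\oplus[1_{\mathfrak E_2}]$ to $[1_{\mathfrak F}]$; equivalently, $\Ksix(\mathfrak e_1\oplus\mathfrak e_2)$ is congruent to the displayed six-term sequence with distinguished elements. The non-unital statement follows by the same argument with all references to units removed.
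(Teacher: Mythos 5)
Your proposal is correct and follows essentially the same route as the paper: form the pull-back $\mathfrak E_1\oplus_{\mathfrak A}\mathfrak E_2$ (the paper's $\mathfrak E_0$, your $\mathfrak C$), identify its $K$-theory with $K_\ast(\mathfrak E_1)\oplus_{K_\ast(\mathfrak A)}K_\ast(\mathfrak E_2)$ using $\delta^{(2)}_\ast=0$, realise the Cuntz sum as the push-out along the sum map $\mathfrak B\oplus\mathfrak B\to\mathfrak B$, and run the diagram chase identifying the kernel with the antidiagonal $\{(\iota_0^{(1)}(x),-\iota_0^{(2)}(x))\}$. You merely make explicit the diagram chase that the paper leaves to the reader, and phrase the first identification via Mayer--Vietoris rather than a comparison of six-term sequences; both are sound.
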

\begin{proof}
For the not necessarily unital case, one simply ignores any mentioning of units in the argument below.

We fix $\mathcal O_2$-isometries $s_1,s_2\in \multialg{\mathfrak B}$, and identify $\mathfrak e_1 \oplus \mathfrak e_2$ with $\mathfrak e_1 \oplus_{s_1,s_2} \mathfrak e_2$, which we denote as $0 \to \mathfrak B \to \mathfrak E \to \mathfrak A \to 0$. Construct the pull-back diagram
\begin{equation}\label{eq:pullback}
 \xymatrix{
  & \mathfrak B \ar@{=}[r] \ar@{>->}[d] & \mathfrak B \ar@{>->}[d]^{\iota^{(1)}} \\
  \mathfrak B \ar@{>->}[r] \ar@{=}[d] & \mathfrak E_0 \ar@{->>}[r] \ar@{->>}[d] & \mathfrak E_1 \ar@{->>}[d]^{\pi^{(1)}} \\
  \mathfrak B \ar@{>->}[r]^{\iota^{(2)}} & \mathfrak E_2 \ar@{->>}[r]^{\pi^{(2)}} & \mathfrak A.
 }
\end{equation}
Applying $K$-theory to this diagram, and using that $\delta_\ast^{(2)} = 0$, one gets the following commutative diagram with exact rows and columns
\[
  \xymatrix{
&& K_0(\mathfrak B) \ar[d] \ar@{=}[r] & K_0(\mathfrak B)\ar[d]^{\iota_0^{(1)}} & \\
  K_1(\mathfrak E_1) \ar[r]^0 \ar[d]^{\pi_1^{(1)}} & K_0(\mathfrak B) \ar[r] \ar@{=}[d] & K_0(\mathfrak E_0) \ar[r] \ar[d] & K_0(\mathfrak E_1) \ar[d]^{\pi_0^{(1)}} \ar[r]^0 & K_1(\mathfrak B) \ar@{=}[d] \\
 K_1(\mathfrak A) \ar[r]^0 & K_0(\mathfrak B) \ar[r]^{\iota_0^{(2)}} & K_0(\mathfrak E_2) \ar[r]^{\pi_0^{(2)}} & K_0(\mathfrak A) \ar[r]^0 & K_1(\mathfrak B).
 }
\]
Hence $K_0(\mathfrak E_0) \cong K_0(\mathfrak E_1) \oplus_{K_0(\mathfrak A)} K_0(\mathfrak E_2)$ canonically, and this isomorphism takes $[1_{\mathfrak E_0}] \in K_0(\mathfrak E_0)$ to the element $[1_{\mathfrak E_1}] \oplus [1_{\mathfrak E_2}] \in K_0(\mathfrak E_1) \oplus_{K_0(\mathfrak A)} K_0(\mathfrak E_2)$.

The pull-back diagram \eqref{eq:pullback} induces a short exact sequence $\mathfrak e_0 : 0 \to \mathfrak B \oplus \mathfrak B \to \mathfrak E_0 \to \mathfrak A \to 0$ where $\mathfrak B \oplus 0$ is the ``top $\mathfrak B$'' and $0\oplus \mathfrak B$ is the ``left $\mathfrak B$'' in \eqref{eq:pullback}.
Let $\Phi \colon \mathfrak B \oplus \mathfrak B \to \mathfrak B$ be the Cuntz sum map $\Phi(b_1 \oplus b_2) = s_1 b_1 s_1^\ast + s_2 b_2 s_2^\ast$. 
We obtain a commutative diagram with exact rows
\begin{equation}\label{eq:pushout}
 \xymatrix{
 0 \ar[r] & \mathfrak B \oplus \mathfrak B \ar[d]^{\Phi} \ar[r] & \mathfrak E_0 \ar[r] \ar[d] & \mathfrak A \ar@{=}[d] \ar[r] &0 \\
 0 \ar[r] & \mathfrak B \ar[r] & \mathfrak E \ar[r] & \mathfrak A \ar[r] & 0,
 }
\end{equation}
for which the $\ast$-homomorphism $\mathfrak E_0 \to \mathfrak E$ is unital.
Applying $K$-theory to this diagram, and using the canonical identification $K_0(\mathfrak E_0) \cong K_0(\mathfrak E_1) \oplus_{K_0(\mathfrak A)} K_0(\mathfrak E_2)$ as well as the fact that $\delta_\ast^{(2)} =0$, one obtains the following commutative diagram with exact rows
\[
 \xymatrix{
 K_1(\mathfrak A) \ar@{=}[d] \ar[r]^{\delta^{(1)}_1 \times 0 \qquad \;} & K_0(\mathfrak B) \oplus K_0(\mathfrak B) \ar[d]^{\mathrm{Sum}} \ar[r]^{(\iota_0^{(1)}, \iota_0^{(2)})\quad \;} & K_0(\mathfrak E_1) \oplus_{K_0(\mathfrak A)} K_0(\mathfrak E_2) \ar[d] \ar[r]^{\qquad \quad \; \; \pi_0^{(1)}} & K_0(\mathfrak A) \ar@{=}[d] \ar[r]^{\quad \; \; \delta^{(1)}_0\times 0 \qquad} 
 & K_1(\mathfrak B)^2 \ar[d]^{\mathrm{Sum}} \\
 K_1(\mathfrak A) \ar[r]^{\quad \delta^{(1)}_1 \quad} & K_0(\mathfrak B) \ar[r] & K_0(\mathfrak E) \ar[r] & K_0(\mathfrak A) \ar[r]^{\quad \delta^{(1)}_0 \quad} & K_1(\mathfrak B).
 }
\]
A diagram chase shows that $K_0(\mathfrak E_1) \oplus_{K_0(\mathfrak A)} K_0(\mathfrak E_2) \to K_0(\mathfrak E)$ is surjective, with kernel  $\{ (\iota_0^{(1)}(x) , - \iota_0^{(2)}(x)) : x\in K_0(\mathfrak B)\}$. As the map $\mathfrak E_0 \to \mathfrak E$ was unital, $[1_{\mathfrak E_1}] \oplus [1_{\mathfrak E_2}]$ is mapped to $[1_{\mathfrak E}]$. Hence we obtain the following commutative diagram with exact rows
\[
 \xymatrix{
 K_1(\mathfrak A) \ar@{=}[d] \ar[r]^{\delta^{(1)}_1 \times 0 \qquad \;} & \frac{K_0(\mathfrak B) \oplus K_0(\mathfrak B)}{\ker \mathrm{Sum}} \ar[d]^{\mathrm{Sum}}_{\cong} \ar[r]^{(\iota_0^{(1)}, \iota_0^{(2)})\qquad} & \frac{K_0(\mathfrak E_1) \oplus_{K_0(\mathfrak A)} K_0(\mathfrak E_2)}{\{ (\iota_0^{(1)}(x) , - \iota_0^{(2)}(x)) : x\in K_0(\mathfrak B)\}} \ar[d]_\cong \ar[r]^{\qquad \quad \; \; \pi_0^{(1)}} & K_0(\mathfrak A) \ar@{=}[d] \ar[r]^{\quad \; \; \delta^{(1)}_0\times 0 \qquad} 
 & \frac{K_1(\mathfrak B)^2}{\ker \mathrm{Sum}} \ar[d]^{\mathrm{Sum}}_\cong \\
 K_1(\mathfrak A) \ar[r]^{\quad \delta^{(1)}_1 \quad} & K_0(\mathfrak B) \ar[r] & K_0(\mathfrak E) \ar[r] & K_0(\mathfrak A) \ar[r]^{\quad \delta^{(1)}_0 \quad} & K_1(\mathfrak B).
 }
\]
The element $[1_{\mathfrak E}]$ exactly corresponds to $[1_{\mathfrak E_1}] \oplus [1_{\mathfrak E_2}]$ via the above isomorphism. By identifying $K_0(\mathfrak B)$ with $\frac{K_0(\mathfrak B) \oplus K_0(\mathfrak B)}{\ker \mathrm{Sum}}$ via the map $x \mapsto (x,0)$, one obtains part of the desired congruence. Running the same argument as above where one interchange $K_0$ and $K_1$, one obtains the rest of the congruence.
\end{proof}

%%%%%%%%%%%%%%%%%%%%%%%%%%%%%%%%%%%%%%%%%%%%%%%%%%%%%%%%%%%%%%%%%%%%
%%%%%%%%%%%%%%%%%%%%%%%%%%%%%%%%%%%%%%%%%%%%%%%%%%%%%%%%%%%%%%%%%%%%
 
\section{A universal coefficient theorem}

Recall that a separable $C^\ast$-algebra $\mathfrak A$ satisfies the UCT (in $\kk$-theory) if and only if there is a short exact sequence
\begin{equation}\label{eq:UCTExt}
0 \to \Ext(K_\ast(\mathfrak A), K_\ast(\mathfrak B)) \to \Ext^{-1}(\mathfrak A, \mathfrak B) \xrightarrow{\gamma_{\mathfrak A, \mathfrak B}} \Hom(K_\ast(\mathfrak A), K_{1-\ast}(\mathfrak B)) \to 0
\end{equation}
for every separable $C^\ast$-algebra $\mathfrak B$. Here we made the canonical identification $\kk^1(\mathfrak A , \mathfrak B) \cong \Ext^{-1}(\mathfrak A, \mathfrak B)$, see Theorem \ref{t:KKExt}. In this section we prove universal coefficient theorems for the unital $\Ext$-groups $\Ext^{-1}_{\us}$ and $\Ext^{-1}_{\uw}$. Such UCT's were stated in \cite{Skandalis-unitalExt} without a proof, and were proved in \cite{Wei-classext} under the assumption that $\mathfrak B$ has an approximate identity of projections.\footnote{While this isn't stated explicitly in \cite[Theorems 4.8 and 4.9]{Wei-classext}, it can be deduced from the proof that $\mathfrak B$ is assumed to have an approximate identity of projections.} We give a complete proof without this additional assumption and prove that the UCT's are natural in both variables. Naturality is crucial for our applications, and was not established in \cite{Wei-classext}.

\begin{definition}
 Given abelian groups $K, H$ and an element $h\in H$, we can form the pointed $\Ext$-group of $(H,h)$ by $K$ by considering pointed extensions
 \[
  0 \to K \to (G,g) \xrightarrow{\phi} (H,h) \to 0
 \]
 for which $\phi(g) = h$. The set $\Ext((H,h),K)$ of congruence classes of such extensions is an abelian group as in the classical case with $\Ext(H,K)$, see Remark \ref{r:Baersum}.
\end{definition}

\begin{remark}\label{r:ptExt}
There is a homomorphism $K \to \Ext((H,h),K)$ given by
\[
k \mapsto [K \rightarrowtail (K\oplus H, k\oplus h) \twoheadrightarrow (H,h)].
\]
The kernel of this map is $\{ \psi(h) : \psi \in \Hom(H,K)\}$. It easily follows that there is a short exact sequence
 \[
0 \to K/\{ \psi(h) : \psi \in \Hom(H,K)\} \to \Ext((H,h), K) \to \Ext(H,K) \to 0.
 \]
\end{remark}

\begin{notation}
For abelian groups $H$ and $K$, and $h\in H$, we let $\Hom((H,h),K)$ denote the subgroup of $\Hom(H,K)$ consisting of homomorphisms $\delta$ for which $\delta(h) = 0$.
\end{notation}
 
\begin{notation}
 We write $\Ext((K_\ast(\mathfrak A),[1_\mathfrak{A}]),K_\ast(\mathfrak B))$ for the group
 \[
  \Ext((K_0(\mathfrak A),[1_\mathfrak{A}]),K_0(\mathfrak B)) \oplus \Ext(K_1(\mathfrak A),K_1(\mathfrak B))
 \]
and $\Hom((K_\ast(\mathfrak A),[1_\mathfrak{A}]),K_{\ast +1} (\mathfrak B))$ for the group
 \[
  \Hom((K_0(\mathfrak A),[1_\mathfrak{A}]),K_1(\mathfrak B)) \oplus \Hom(K_1(\mathfrak A), K_0(\mathfrak B)).
  \]
\end{notation}
 
\begin{remark}\label{r:gammakappa}
 It is easily seen that there is a homomorphism 
\[
\tilde \gamma_{\mathfrak A, \mathfrak B} \colon \Ext_{\us}^{-1}(\mathfrak A, \mathfrak B) \to \Hom((K_\ast(\mathfrak A),[1_\mathfrak{A}]),K_{\ast +1} (\mathfrak B)),
\]
given by mapping $[\mathfrak e]_\s$ to its boundary map in $K$-theory.

Similarly, there is a map
\[
\widetilde \kappa_{\mathfrak A, \mathfrak B} \colon \ker \widetilde \gamma_{\mathfrak A, \mathfrak B} \to \Ext((K_\ast(\mathfrak A), [1_{\mathfrak A}]), K_\ast(\mathfrak B))
\]
given by mapping $[\mathfrak e]_\s$ to its induced six-term exact sequence in $K$-theory with position of the unit. This is well defined since the boundary maps vanish, but a priori it is not obviously a homomorphism (it is a homomorphism by Corollary \ref{c:kappatilde} below).
\end{remark}

The following is an immediate consequence of Proposition \ref{p:sumext} and the definition of the sum in the pointed $\Ext$-group.

\begin{corollary}\label{c:kappatilde}   
Let $\mathfrak A$ and $\mathfrak B$ be separable $C^\ast$-algebras for which $\mathfrak A$ is unital. Then the map
\[
\widetilde \kappa_{\mathfrak A, \mathfrak B} \colon \ker \widetilde \gamma_{\mathfrak A, \mathfrak B} \to \Ext((K_\ast(\mathfrak A),[1_{\mathfrak A}]), K_\ast(\mathfrak B))
\]
defined in Remark \ref{r:gammakappa} is a homomorphism.
\end{corollary}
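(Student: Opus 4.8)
The plan is to reduce the statement to the explicit description of Cuntz sums provided by Proposition~\ref{p:sumext}. Since $\widetilde\gamma_{\mathfrak A,\mathfrak B}$ is a homomorphism, $\ker\widetilde\gamma_{\mathfrak A,\mathfrak B}$ is a subgroup of $\Ext_{\us}^{-1}(\mathfrak A,\mathfrak B)$, whose addition is induced by the Cuntz sum of extensions; and addition in the target group is the pointed Baer sum of Remark~\ref{r:Baersum}. Thus, given $[\mathfrak e_1]_\s,[\mathfrak e_2]_\s \in \ker\widetilde\gamma_{\mathfrak A,\mathfrak B}$ represented by unital extensions $\mathfrak e_i : 0 \to \mathfrak B \to \mathfrak E_i \to \mathfrak A \to 0$ (replacing $\mathfrak B$ by $\mathfrak B\otimes\mathbb K$ if necessary, we may assume $\mathfrak B$ is stable) with vanishing boundary maps $\delta_\ast^{(i)} = 0$, it suffices to identify $\Ksix(\mathfrak e_1 \oplus \mathfrak e_2)$, up to a congruence fixing $K_\ast(\mathfrak B)$, $K_\ast(\mathfrak A)$ and the distinguished elements, with the Baer sum of $\Ksix(\mathfrak e_1)$ and $\Ksix(\mathfrak e_2)$. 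Well-definedness of $\widetilde\kappa_{\mathfrak A,\mathfrak B}$ on $\ker\widetilde\gamma_{\mathfrak A,\mathfrak B}$ is already recorded in Remark~\ref{r:gammakappa} and follows from Corollary~\ref{c:KsixExt}.

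Because all four boundary maps $\delta_j^{(i)}$ vanish, each $\Ksix(\mathfrak e_i)$ splits into two short exact sequences $0 \to K_j(\mathfrak B) \xrightarrow{\iota_j^{(i)}} K_j(\mathfrak E_i) \xrightarrow{\pi_j^{(i)}} K_j(\mathfrak A) \to 0$ for $j=0,1$, the one with $j=0$ being pointed by $[1_{\mathfrak E_i}]$ and $[1_{\mathfrak A}]$. Applying Proposition~\ref{p:sumext} with $\mathfrak e_2$ playing the role of the extension with vanishing boundary map, and using $\delta_\ast^{(1)} = 0$ as well, we get that $\Ksix(\mathfrak e_1 \oplus \mathfrak e_2)$ is congruent to the six-term sequence whose degree-zero part is
\[
0 \to K_0(\mathfrak B) \xrightarrow{\iota_0^{(1)}} \frac{K_0(\mathfrak E_1) \oplus_{K_0(\mathfrak A)} K_0(\mathfrak E_2)}{\{(\iota_0^{(1)}(x),-\iota_0^{(2)}(x)) : x \in K_0(\mathfrak B)\}} \xrightarrow{\pi_0^{(1)}} K_0(\mathfrak A) \to 0,
\]
pointed by $[1_{\mathfrak E_1}] \oplus [1_{\mathfrak E_2}]$ and $[1_{\mathfrak A}]$, and whose degree-one part is the analogous sequence with all units removed. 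By Remark~\ref{r:Baersum} and the definition of the pointed $\Ext$-group, this is exactly the Baer sum of the two extensions $0 \to K_j(\mathfrak B) \to K_j(\mathfrak E_i) \to K_j(\mathfrak A) \to 0$ in each degree $j$. Hence $\widetilde\kappa_{\mathfrak A,\mathfrak B}([\mathfrak e_1]_\s + [\mathfrak e_2]_\s) = \widetilde\kappa_{\mathfrak A,\mathfrak B}([\mathfrak e_1]_\s) + \widetilde\kappa_{\mathfrak A,\mathfrak B}([\mathfrak e_2]_\s)$, so $\widetilde\kappa_{\mathfrak A,\mathfrak B}$ is a homomorphism.

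The only genuinely delicate point is this last identification: one must verify that the congruence produced by Proposition~\ref{p:sumext} agrees with the standard description of the (pointed) Baer sum not merely as a group isomorphism but as a congruence of pointed extensions --- that is, that the structure maps of the middle term really are $\iota^{(1)}$ and $\pi^{(1)}$ and that the distinguished element is $[1_{\mathfrak E_1}] \oplus [1_{\mathfrak E_2}]$. Both assertions are part of the conclusion of Proposition~\ref{p:sumext}, so this amounts to matching notation rather than proving anything new, and no idea beyond Proposition~\ref{p:sumext} is required.
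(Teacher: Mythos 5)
Your proposal is correct and follows exactly the route the paper takes: the paper's proof consists of the single observation that the claim is an immediate consequence of Proposition \ref{p:sumext} together with the definition of the (pointed) Baer sum, and your argument simply spells out that identification degree by degree, including the matching of the distinguished element $[1_{\mathfrak E_1}]\oplus[1_{\mathfrak E_2}]$ and the structure maps $\iota^{(1)}_0$, $\pi^{(1)}_0$. No gaps.
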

 
We introduce the following non-standard notation to ease what follows.

 \begin{notation}\label{n:Gamma}
  Let $\mathfrak A$ be a unital separable $C^\ast$-algebra, and $\mathfrak B$ be a separable $C^\ast$-algebra. We define
  \[
   \Gamma_{\mathfrak A, \mathfrak B} := \{ \psi([1_\mathfrak{A}]) : \psi\in \Hom(K_0(\mathfrak A), K_0(\mathfrak B))\}.
  \]
\end{notation}

\begin{remark}
If $\mathfrak A$ and $\mathfrak B$ are $C^\ast$-algebras with $\mathfrak A$ unital, then
\[
0 \to K_0(\mathfrak B) / \Gamma_{\mathfrak A, \mathfrak B} \to \Ext((K_\ast(\mathfrak A),[1_\mathfrak{A}]),K_\ast(\mathfrak B)) \to \Ext(K_\ast(\mathfrak A),K_\ast(\mathfrak B)) \to 0
\]
is a short exact sequence by Remark \ref{r:ptExt}.
\end{remark}

For a unital $C^\ast$-algebra $\mathfrak D$, we let $\mathcal U(\mathfrak D)$ denote its unitary group, and let $\mathcal U_0(\mathfrak D)$ denote the connected component of $1_{\mathfrak D}$ in $\mathcal U(\mathfrak D)$. Recall that a unital $C^\ast$-algebra $\mathfrak D$ is \emph{$K_1$-surjective} (resp.~\emph{$K_1$-injective}) if the canonical homomorphism $\mathcal U(\mathfrak D)/\mathcal U_0(\mathfrak D) \to K_1(\mathfrak D)$ is surjective (resp.~injective), and \emph{$K_1$-bijective} if it is both $K_1$-surjective and $K_1$-injective.

While the following result is well-known to experts, we know of no reference and thus include a proof.

\begin{proposition}\label{p:K1corona}
If $\mathfrak B$ is a stable $C^\ast$-algebra then the corona algebra $\corona{\mathfrak B}$ is $K_1$-bijective.
\end{proposition}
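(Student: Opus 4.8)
The plan is to show that $\corona{\mathfrak B}$ is both $K_1$-surjective and $K_1$-injective by exploiting the stability of $\mathfrak B$ together with the half-exactness of $K$-theory applied to the extension $0 \to \mathfrak B \to \multialg{\mathfrak B} \xrightarrow{\pi_{\mathfrak B}} \corona{\mathfrak B} \to 0$. The key structural input is that when $\mathfrak B$ is stable, $\multialg{\mathfrak B}$ is properly infinite and in fact $K$-theoretically trivial: $K_0(\multialg{\mathfrak B}) = K_1(\multialg{\mathfrak B}) = 0$, and moreover $\mathcal U(\multialg{\mathfrak B}) = \mathcal U_0(\multialg{\mathfrak B})$ is connected (this last fact is already invoked in the excerpt, in the footnote about functoriality). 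From the six-term exact sequence, $K_0(\multialg{\mathfrak B}) = 0 = K_1(\multialg{\mathfrak B})$ forces the index map $\chi_1 \colon K_1(\corona{\mathfrak B}) \to K_0(\mathfrak B)$ and the exponential map $K_0(\corona{\mathfrak B}) \to K_1(\mathfrak B)$ to be isomorphisms.

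For $K_1$-surjectivity: given $x \in K_1(\corona{\mathfrak B})$, I would use $\chi_1(x) \in K_0(\mathfrak B)$ and the stability of $\mathfrak B$ to realize this class concretely. Every element of $K_0(\mathfrak B)$ for stable $\mathfrak B$ is of the form $[p] - [q]$ for genuine projections $p, q \in \mathfrak B$ (no matrix amplification needed, by stability), and one can even arrange $q$ to be chosen so that $p, q$ are orthogonal. Then the standard construction — picking a partial isometry $v \in \multialg{\mathfrak B}$ with $v^\ast v = 1 - q'$, $vv^\ast = 1 - p'$ for suitable projections $p', q'$ in $\mathfrak B$ Murray–von Neumann equivalent to $p, q$, using that $1 - p'$ and $1 - q'$ are full properly infinite projections in $\multialg{\mathfrak B}$ equivalent to $1$ — produces a genuine unitary $u = \pi_{\mathfrak B}(v') \in \corona{\mathfrak B}$ (not merely in a matrix algebra) with $\chi_1([u]) = [p] - [q]$. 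Since $\chi_1$ is an isomorphism, $[u] = x$ in $K_1(\corona{\mathfrak B})$, so the map $\mathcal U(\corona{\mathfrak B})/\mathcal U_0(\corona{\mathfrak B}) \to K_1(\corona{\mathfrak B})$ is onto. The computation $\chi_1([u]) = [1 - vv^\ast] - [1 - v^\ast v]$ type identity is exactly the content of \eqref{eq:chi1u1} in the proof of Lemma \ref{l:moveunit}.

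For $K_1$-injectivity: suppose $u \in \mathcal U(\corona{\mathfrak B})$ has $[u] = 0$ in $K_1(\corona{\mathfrak B})$; I must show $u \in \mathcal U_0(\corona{\mathfrak B})$. Since $\chi_1$ is injective, $[u] = 0$ gives $\chi_1([u]) = 0$ in $K_0(\mathfrak B)$; by exactness of the six-term sequence at $K_1(\corona{\mathfrak B})$, $u$ lies in the image of $K_1(\multialg{\mathfrak B}) = 0$ — but that only says $[u] = 0$, which we knew. The correct route is: $[u] = 0$ in $K_1(\corona{\mathfrak B})$ means, by definition of $K_1$, that $u \oplus 1_n$ is connected to $1_{n+1}$ in $\mathcal U(M_{n+1}(\corona{\mathfrak B}))$ for some $n$; then I use stability to absorb the matrix amplification — there is an isomorphism $M_{n+1}(\corona{\mathfrak B}) \cong \corona{\mathfrak B}$ compatible with the inclusion (coming from $M_{n+1}(\mathfrak B) \cong \mathfrak B$ and functoriality of the corona, as in Definition \ref{d:pbpo}) carrying $u \oplus 1_n$ to a unitary homotopic to $u$ in $\corona{\mathfrak B}$ itself. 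Thus $u \in \mathcal U_0(\corona{\mathfrak B})$.

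The main obstacle I anticipate is the bookkeeping in the surjectivity argument: correctly choosing the partial isometry in $\multialg{\mathfrak B}$ so that the resulting unitary genuinely lives in $\corona{\mathfrak B}$ (degree $1$, not an amplification) and has the prescribed index, which requires using properly infinite full projections in $\multialg{\mathfrak B}$ and the fact that $K_0(\mathfrak B)$ is spanned by differences of projections in $\mathfrak B$ for stable $\mathfrak B$. The injectivity direction is comparatively soft once one invokes $M_\infty$-stability of the corona. Alternatively — and this may be cleaner to write — one can prove both halves simultaneously by establishing that the natural map $\mathcal U(\corona{\mathfrak B})/\mathcal U_0(\corona{\mathfrak B}) \to K_1(\corona{\mathfrak B})$ fits into a commuting square with the isomorphism $\chi_1$ and the Murray–von Neumann picture of $K_0(\mathfrak B)$, and is therefore an isomorphism; I would mention this as the streamlined version.
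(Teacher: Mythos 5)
Your reduction of both halves to the index map $\chi_1\colon K_1(\corona{\mathfrak B})\to K_0(\mathfrak B)$ being an isomorphism (via $K_\ast(\multialg{\mathfrak B})=0$) is fine, but each half then rests on a step that is not justified. The serious one is injectivity. From $[u]=0$ you get $u\oplus 1_n\sim_h 1_{n+1}$ in $M_{n+1}(\corona{\mathfrak B})$, and transporting through $M_{n+1}(\corona{\mathfrak B})\cong\corona{M_{n+1}(\mathfrak B)}\cong\corona{\mathfrak B}$ you obtain only that $\bar s_1 u\bar s_1^\ast+(1-\bar s_1\bar s_1^\ast)$ lies in $\mathcal U_0(\corona{\mathfrak B})$, where $s_1,\dots,s_{n+1}\in\multialg{\mathfrak B}$ are the isometries implementing the stability isomorphism and $\bar s_i=\pi_{\mathfrak B}(s_i)$. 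Your argument then needs $\bar s_1 u\bar s_1^\ast+(1-\bar s_1\bar s_1^\ast)\sim_h u$, and that is precisely the crux: whether a unitary is homotopic to its Cuntz sum with $1$ is the heart of the $K_1$-injectivity problem for properly infinite unital $C^\ast$-algebras, which is open in general (this is the theme of \cite{BlanchardRohdeRordam-K1inj}), so asserting it as ``absorbing the matrix amplification'' is essentially circular. The paper's proof uses structure specific to multiplier algebras of stable algebras that your argument never touches: by \cite{Nistor-stablerank} the stable rank data of $\mathfrak B$ is small enough that, by \cite{Nagy-liftinginv} (building on \cite{Rieffel-sr}), a unitary $u\in\corona{\mathfrak B}$ with $[u]=0$ lifts to a unitary in $\multialg{\mathfrak B}$, and $\mathcal U(\multialg{\mathfrak B})$ is connected by \cite{CuntzHigson-Kuipersthm}; without some such input the homotopy you need is not established.

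The surjectivity half also has a gap, though a repairable one: you assert that every element of $K_0(\mathfrak B)$ for stable $\mathfrak B$ is $[p]-[q]$ with $p,q$ projections in $\mathfrak B$ itself. This is false in general: $C_0(\mathbb R^2)\otimes\mathbb K$ is stable with $K_0\cong\mathbb Z$, yet its only projection is $0$ (the rank of a projection in $C_0(X)\otimes\mathbb K$ is a continuous integer-valued function vanishing at infinity, hence identically $0$ on $\mathbb R^2$). Realizing $K_0$-classes by projections in the algebra is exactly the ``approximate identity of projections'' hypothesis of \cite{Wei-classext} that this proposition is written to avoid. The paper's route to surjectivity sidesteps all of this: $\corona{\mathfrak B}$ is unital and properly infinite because $\mathfrak B$ is stable, and properly infinite unital $C^\ast$-algebras are $K_1$-surjective by \cite{Cuntz-K-theoryI}. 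I would replace your surjectivity paragraph by that citation and rebuild the injectivity argument along the lifting route.
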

\begin{proof}
Stability of $\mathfrak B$ implies that $\corona{\mathfrak B}$ is properly infinite and thus $K_1$-surjective by \cite{Cuntz-K-theoryI}. For $K_1$-injectivity, let $u\in \mathcal U(\corona{\mathfrak B})$ be such that $[u]= 0$ in $K_1(\corona{\mathfrak B})$. By \cite[Corollary 2.5]{Nistor-stablerank} the connected stable rank of $\mathfrak B$ is at most 2. Consequently the general stable rank\footnote{Not to be confused with the topological stable rank, which in modern terms is usually just referred to as stable rank.} of $\mathfrak B$ is at most 2. By \cite[Theorem 2]{Nagy-liftinginv} (which relies on results in \cite{Rieffel-sr}) it follows that $u$ lifts to $\widetilde u\in \mathcal U(\multialg{\mathfrak B})$. By \cite{CuntzHigson-Kuipersthm} one has $\mathcal U(\multialg{\mathfrak B}) = \mathcal U_0(\multialg{\mathfrak B})$, and thus $u\in \mathcal U_0(\corona{\mathfrak B})$. Hence $\corona{\mathfrak B}$ is $K_1$-injective.
\end{proof}

\begin{remark}\label{r:ex}
Let $\mathfrak A$ and $\mathfrak B$ be separable $C^\ast$-algebras for which $\mathfrak A$ is unital and $\mathfrak B$ is stable. For every $x\in K_0(\mathfrak B) \cong K_1(\corona{\mathfrak B})$ there is an induced semisplit, unital extension $\mathfrak e_x$ of $\mathfrak A$ by $\mathfrak B$ (uniquely determined up to strong unitary equivalence) given as follows: Let $\tau_0 \colon \mathfrak A \to \corona{\mathfrak B}$ be the Busby map of a trivial, absorbing unital extension \cite{Thomsen-absorbing}, and let $u\in \mathcal U (\corona{\mathfrak B})$ be a unitary being mapped to $x$ under the natural isomorphism $K_1(\corona{\mathfrak B}) \xrightarrow \cong K_0(\mathfrak B)$. Then $\mathfrak e_x$ is the extension with Busby map $\Ad u \circ \tau_0$.

As $\tau_0$ is uniquely determined up to strong unitary equivalence, and since $K_1(\corona{\mathfrak B}) = \mathcal U(\corona{\mathfrak B}) / \mathcal U_0 (\corona{\mathfrak B})$ by Proposition \ref{p:K1corona}, it easily follows that $\mathfrak e_x$ is unique up to strong unitary equivalence.
\end{remark}

The following elementary lemma will be used frequently.

\begin{lemma}\label{l:Advssum}
Let $\mathfrak A$ and $\mathfrak B$ be separable $C^\ast$-algebras for which $\mathfrak A$ is unital and $\mathfrak B$ is stable. Let $\mathfrak e$ be a unital extension of $\mathfrak A$ by $\mathfrak B$, and let $u \in \mathcal U (\corona{\mathfrak B})$. Then
\[
[\Ad u \cdot \mathfrak e]_\s = [\mathfrak e]_\s + [\mathfrak e_{[u]_1}]_\s \in \Ext_\us(A, B).
\]
In particular, the map
\[
K_0(\mathfrak B) \to \Ext_\us^{-1} (\mathfrak A, \mathfrak B) , \qquad x\mapsto [\mathfrak e_x]_\s
\]
is a group homomorphism.
\end{lemma}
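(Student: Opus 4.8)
The plan is to prove the identity at the level of concrete representatives and then read off the classes. Fix the trivial, absorbing, unital extension $\mathfrak e_0$ with Busby map $\tau_0$ used in Remark \ref{r:ex} to define the extensions $\mathfrak e_x$, let $\tau$ be the Busby map of $\mathfrak e$, and fix $\mathcal O_2$-isometries $s_1,s_2\in\multialg{\mathfrak B}$ with images $\bar s_i:=\pi_{\mathfrak B}(s_i)\in\corona{\mathfrak B}$ (available since $\mathfrak B$ is stable). Then $(\Ad u\cdot\mathfrak e)\oplus\mathfrak e_0$ has Busby map $a\mapsto \bar s_1 u\tau(a)u^*\bar s_1^*+\bar s_2\tau_0(a)\bar s_2^*$, while $\mathfrak e\oplus(\Ad u\cdot\mathfrak e_0)$ has Busby map $a\mapsto \bar s_1\tau(a)\bar s_1^*+\bar s_2 u\tau_0(a)u^*\bar s_2^*$. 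I would then verify that the unitary
\[
Z:=\bar s_1 u^*\bar s_1^*+\bar s_2 u\bar s_2^*\in\mathcal U(\corona{\mathfrak B})
\]
conjugates the first of these Busby maps onto the second, i.e.\ $\Ad Z$ composed with the first equals the second; this is a direct computation using $\bar s_i^*\bar s_j=\delta_{ij}1$.

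The point requiring care is to upgrade this conjugation to a \emph{strong} unitary equivalence, i.e.\ to lift $Z$ to a unitary in $\multialg{\mathfrak B}$. The more obvious candidate $\bar s_1 u\bar s_1^*+\bar s_2\bar s_2^*$ also transports the unitary from one summand to the other, but it has class $[u]_1$ in $K_1(\corona{\mathfrak B})$ and so need not lift; the advantage of $Z$ is that under the isomorphism $\corona{\mathfrak B}\cong M_2(\corona{\mathfrak B})$ afforded by $s_1,s_2$ it becomes $\operatorname{diag}(u^*,u)$, whence $[Z]_1=[u^*]_1+[u]_1=0$ in $K_1(\corona{\mathfrak B})$. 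By Proposition \ref{p:K1corona} the corona algebra $\corona{\mathfrak B}$ is $K_1$-injective, so $Z\in\mathcal U_0(\corona{\mathfrak B})$; and since $\mathfrak B$ is stable, $\mathcal U(\multialg{\mathfrak B})$ is connected and $\pi_{\mathfrak B}$ maps it onto $\mathcal U_0(\corona{\mathfrak B})$, so $Z$ lifts to a unitary in $\multialg{\mathfrak B}$. Hence $(\Ad u\cdot\mathfrak e)\oplus\mathfrak e_0\sim_\s\mathfrak e\oplus(\Ad u\cdot\mathfrak e_0)$. Since $\mathfrak e_0$ is trivial unital it represents $0\in\Ext_\us(\mathfrak A,\mathfrak B)$, and Cuntz sum computes the addition there; moreover $\Ad u\cdot\mathfrak e_0$ has Busby map $\Ad u\circ\tau_0$, so it is one of the extensions denoted $\mathfrak e_{[u]_1}$ in Remark \ref{r:ex} and thus $[\Ad u\cdot\mathfrak e_0]_\s=[\mathfrak e_{[u]_1}]_\s$. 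Combining these, $[\Ad u\cdot\mathfrak e]_\s=[(\Ad u\cdot\mathfrak e)\oplus\mathfrak e_0]_\s=[\mathfrak e\oplus(\Ad u\cdot\mathfrak e_0)]_\s=[\mathfrak e]_\s+[\mathfrak e_{[u]_1}]_\s$. I expect the only genuinely non-formal step to be recognising that the correct conjugating unitary is $Z$ rather than the naive one.

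For the final assertion, given $x,y\in K_0(\mathfrak B)\cong K_1(\corona{\mathfrak B})$ I would use $K_1$-surjectivity of $\corona{\mathfrak B}$ (again Proposition \ref{p:K1corona}) to choose unitaries $u,v\in\mathcal U(\corona{\mathfrak B})$ with $[u]_1=x$ and $[v]_1=y$; then $[uv]_1=x+y$, so $\mathfrak e_{x+y}$ is strongly unitarily equivalent to $\Ad(uv)\cdot\mathfrak e_0=\Ad u\cdot(\Ad v\cdot\mathfrak e_0)$. Applying the formula just proved twice --- first with $(\mathfrak e,u)$ replaced by $(\mathfrak e_0,v)$, which gives $[\Ad v\cdot\mathfrak e_0]_\s=[\mathfrak e_0]_\s+[\mathfrak e_y]_\s=[\mathfrak e_y]_\s$, and then with $\mathfrak e$ replaced by $\Ad v\cdot\mathfrak e_0$ and the unitary $u$ --- yields $[\mathfrak e_{x+y}]_\s=[\Ad v\cdot\mathfrak e_0]_\s+[\mathfrak e_{[u]_1}]_\s=[\mathfrak e_x]_\s+[\mathfrak e_y]_\s$, so $x\mapsto[\mathfrak e_x]_\s$ is a group homomorphism.
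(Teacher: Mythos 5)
Your proof is correct and is essentially the paper's own argument: the paper conjugates the Busby map of $\mathfrak e\oplus\mathfrak e_{[u]_1}$ by $u\oplus u^\ast$ to obtain that of $(\Ad u\cdot\mathfrak e)\oplus\mathfrak e_0$, and your unitary $Z$ is exactly the adjoint of this, implementing the same strong unitary equivalence in the reverse direction, with the same key observation that $[u\oplus u^\ast]_1=0$ so the conjugating unitary lifts to $\multialg{\mathfrak B}$. Your treatment of the lifting step and of the ``in particular'' clause merely spells out details the paper leaves implicit.
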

\begin{proof}
Let $s_1,s_2\in \multialg{\mathfrak B}$ be $\mathcal O_2$-isometries, and let $\oplus$ denote the Cuntz sum induced by this choice of isometries. Then
\begin{equation}
 \Ad (u\oplus u^\ast) \circ (\tau_\mathfrak{e} \oplus \tau_{\mathfrak e_{[u]_1}}) = \Ad (u \oplus u^\ast) \circ (\tau_{\mathfrak e} \oplus (\Ad u \circ \tau_0)) = (\Ad u \circ \tau_{\mathfrak e}) \oplus \tau_0
 \end{equation}
where $\tau_0$ is an absorbing, trivial unital extension. As $u \oplus u^\ast$ lifts to a unitary in $\multialg{\mathfrak B}$, the result follows.
\end{proof}

The following is an immediate consequence of Lemma \ref{l:moveunit} applied to the case where $\mathfrak e$ is a trivial unital extension.

\begin{corollary}\label{c:Extex}
Let $\mathfrak A$ and $\mathfrak B$ be separable $C^\ast$-algebras for which $\mathfrak A$ is unital and $\mathfrak B$ is stable, and let $x\in K_0(\mathfrak B)$. Then $\mathfrak e_x$ induces the element
\[
[0 \to K_0(\mathfrak B) \to (K_0(\mathfrak B) \oplus K_0(\mathfrak A), x \oplus [1_\mathfrak{A}]) \to (K_0(\mathfrak A), [1_{\mathfrak A}]) \to 0]
\]
in $\Ext((K_0(\mathfrak A),[1_{\mathfrak A}]), K_0(\mathfrak B))$.
\end{corollary}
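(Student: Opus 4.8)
The plan is to combine Lemma~\ref{l:moveunit} with the description of $\mathfrak e_x$ in Remark~\ref{r:ex}, and then to identify the resulting six-term exact sequence with the asserted pointed extension of groups. Recall that by definition $\mathfrak e_x = \Ad u \cdot \mathfrak e_0$, where $\mathfrak e_0$ is a trivial (hence absorbing) unital extension of $\mathfrak A$ by $\mathfrak B$ with Busby map $\tau_0$, and $u\in\mathcal U(\corona{\mathfrak B})$ is a unitary with $[u]\in K_1(\corona{\mathfrak B})$ corresponding to $x$ under the natural isomorphism $\chi_1\colon K_1(\corona{\mathfrak B})\xrightarrow{\cong} K_0(\mathfrak B)$; thus $\chi_1([u]) = x$.

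First I would apply Lemma~\ref{l:moveunit} with $\mathfrak e = \mathfrak e_0$. Since $\mathfrak e_0$ is trivial, its boundary maps $\delta_\ast$ vanish, so $\Ksixnu(\mathfrak e_0)$ splits: $K_0(\mathfrak E_0)\cong K_0(\mathfrak B)\oplus K_0(\mathfrak A)$ via $(\iota_0,\pi_0)$ together with a splitting $\rho_\ast\colon K_\ast(\mathfrak A)\to K_\ast(\mathfrak E_0)$ induced by the $\ast$-homomorphism $\rho\colon\mathfrak A\to\mathfrak E_0$ witnessing triviality, and under this identification $[1_{\mathfrak E_0}]$ corresponds to $0\oplus[1_{\mathfrak A}]$ because $\rho$ is unital. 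Lemma~\ref{l:moveunit} then tells us that $\Ksix(\Ad u\cdot\mathfrak e_0) = \Ksix(\mathfrak e_x)$ is congruent to the six-term sequence whose $K_0$-middle group is $(K_0(\mathfrak E_0),\,[1_{\mathfrak E_0}] + \iota_0(\chi_1([u]))) = (K_0(\mathfrak B)\oplus K_0(\mathfrak A),\, x\oplus[1_{\mathfrak A}])$, with maps $\iota_0$ the inclusion into the first coordinate and $\pi_0$ the projection onto the second, and with all boundary maps still zero. This is exactly the data of the pointed short exact sequence of abelian groups
\[
0 \to K_0(\mathfrak B) \to (K_0(\mathfrak B)\oplus K_0(\mathfrak A),\, x\oplus[1_{\mathfrak A}]) \to (K_0(\mathfrak A),\,[1_{\mathfrak A}]) \to 0,
\]
so its class in $\Ext((K_0(\mathfrak A),[1_{\mathfrak A}]),K_0(\mathfrak B))$ is the asserted one.

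The only genuinely nontrivial input is Lemma~\ref{l:moveunit}, which has already been proved, so the remaining work is bookkeeping: one must check that the congruence produced by that lemma is compatible with the pointed-group structure, i.e.\ that the maps in the middle row are the coordinate inclusion and projection and not merely abstractly isomorphic to them. This follows because $\mathfrak e_0$ being \emph{split} (not just semisplit) gives an honest group-theoretic splitting of $\Ksixnu(\mathfrak e_0)$, turning the middle $K_0$-group literally into the direct sum $K_0(\mathfrak B)\oplus K_0(\mathfrak A)$ with the evident maps. The anticipated main obstacle is thus purely one of matching conventions — verifying that the distinguished element $[1_{\mathfrak E_0}]$ maps to $0\oplus[1_{\mathfrak A}]$ under the chosen splitting (so that Lemma~\ref{l:moveunit}'s correction term $\iota_0(\chi_1([u]))$ lands in the first coordinate as $x$), and that the boundary maps of $\mathfrak e_x$, being congruent to those of $\mathfrak e_0$, are identically zero. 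With these checks in place the statement is immediate.
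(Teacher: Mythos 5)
Your proposal is correct and follows exactly the paper's route: the paper derives this corollary as an immediate consequence of Lemma \ref{l:moveunit} applied to a trivial unital extension, which is precisely what you do, including the key bookkeeping points that the splitting of $\mathfrak e_0$ identifies $K_0(\mathfrak E_0)$ with $K_0(\mathfrak B)\oplus K_0(\mathfrak A)$ carrying $[1_{\mathfrak E_0}]$ to $0\oplus[1_{\mathfrak A}]$, and that the correction term $\iota_0(\chi_1([u]))$ then moves the distinguished element to $x\oplus[1_{\mathfrak A}]$.
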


Recall that $\gamma_{\mathfrak A, \mathfrak B} \colon \Ext^{-1}(\mathfrak A, \mathfrak B) \to \Hom(K_\ast(\mathfrak A), K_{1-\ast}(\mathfrak B))$ denotes the canonical homomorphism.

\begin{lemma}\label{l:UCT}
Let $\mathfrak A$ be a separable, unital $C^\ast$-algebra satisfying the UCT, and let $\mathfrak B$ be a separable, stable $C^\ast$-algebra. 
  Then there is an exact sequence
  \[
    0 \to K_0(\mathfrak B)/\Gamma_{\mathfrak A, \mathfrak B} \to \Ext_{\us}^{-1}(\mathfrak A, \mathfrak B) \to \Ext^{-1}(\mathfrak A, \mathfrak B).
  \]
  Moreover, the map $\Ext_{\uw}^{-1}(\mathfrak A, \mathfrak B) \to \Ext^{-1}(\mathfrak A, \mathfrak B)$ is an isomorphism onto
\[
\gamma_{\mathfrak A, \mathfrak B}^{-1}(\Hom((K_\ast(\mathfrak A),[1_\mathfrak{A}]),K_{\ast +1} (\mathfrak B))) \quad (\subseteq \Ext^{-1}(\mathfrak A, \mathfrak B)).
\]
\end{lemma}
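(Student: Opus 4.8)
The plan is to treat the two assertions somewhat separately, using the two natural maps already built in the excerpt. For the first exact sequence, the relevant maps are the homomorphism $K_0(\mathfrak B) \to \Ext_{\us}^{-1}(\mathfrak A,\mathfrak B)$, $x\mapsto [\mathfrak e_x]_\s$, from Lemma \ref{l:Advssum}, and the forgetful homomorphism $\Ext_{\us}^{-1}(\mathfrak A,\mathfrak B)\to \Ext^{-1}(\mathfrak A,\mathfrak B)$ obtained by forgetting that an extension (and its trivial stabilisers) are unital. First I would identify the image of $x\mapsto[\mathfrak e_x]_\s$ inside the kernel of the forgetful map: since $\mathfrak e_x = \Ad u\cdot \mathfrak e_0$ with $\mathfrak e_0$ trivial unital and $u\in\mathcal U(\corona{\mathfrak B})$, the extension $\mathfrak e_x$ is weakly unitarily equivalent to the trivial extension $\mathfrak e_0$, hence lands in the $0$ element of $\Ext^{-1}(\mathfrak A,\mathfrak B)$; so the composition is zero. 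Conversely, if a unital extension $\mathfrak e$ maps to $0$ in $\Ext^{-1}(\mathfrak A,\mathfrak B)$, then after adding trivial (non-unital) extensions it is weakly unitarily equivalent to a trivial extension, and by an absorption argument — absorbing a trivial unital extension on both sides, using that the unital trivial extension $\mathfrak e_0$ is absorbing by \cite{Thomsen-absorbing} — one reduces to the case where $\mathfrak e$ and a trivial \emph{unital} extension have the same Busby map up to $\Ad$ of a unitary in $\corona{\mathfrak B}$; i.e.\ $\mathfrak e\sim_\w \mathfrak e_0$, so $\mathfrak e\sim_\s \Ad u\cdot\mathfrak e_0 = \mathfrak e_x$ for the appropriate $x$. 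Thus the kernel of the forgetful map equals the image of $x\mapsto[\mathfrak e_x]_\s$. It remains to compute the kernel of $x\mapsto[\mathfrak e_x]_\s$: here is where the UCT on $\mathfrak A$ enters. Apply $\widetilde\kappa_{\mathfrak A,\mathfrak B}$ (Corollary \ref{c:kappatilde}) and Corollary \ref{c:Extex}, which says $\mathfrak e_x$ is sent to the class of the pointed extension $0\to K_0(\mathfrak B)\to (K_0(\mathfrak B)\oplus K_0(\mathfrak A), x\oplus[1_\mathfrak A])\to (K_0(\mathfrak A),[1_\mathfrak A])\to 0$; this class is zero in $\Ext((K_0(\mathfrak A),[1_\mathfrak A]),K_0(\mathfrak B))$ precisely when $x\in\Gamma_{\mathfrak A,\mathfrak B}$ by Remark \ref{r:ptExt}. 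So $\Gamma_{\mathfrak A,\mathfrak B}\subseteq\ker(x\mapsto[\mathfrak e_x]_\s)$. For the reverse inclusion — the essential use of the UCT — one argues that if $[\mathfrak e_x]_\s = 0$ then certainly $\widetilde\kappa_{\mathfrak A,\mathfrak B}([\mathfrak e_x]_\s)=0$, and since the boundary map of $\mathfrak e_x$ already vanishes, the UCT for $\mathfrak A$ (in the form of \eqref{eq:UCTExt}, i.e.\ that $\kappa$ is injective on $\ker\gamma$) forces the corresponding pointed invariant to vanish, giving $x\in\Gamma_{\mathfrak A,\mathfrak B}$. This yields the first exact sequence.

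For the second assertion, I would use that $\Ext_{\uw}^{-1}(\mathfrak A,\mathfrak B)$ embeds into $\kk^1(\mathfrak A,\mathfrak B)\cong\Ext^{-1}(\mathfrak A,\mathfrak B)$, as remarked in the introduction: the point is that for the weak equivalence one only quotients by $\Ad$ of a unitary in $\corona{\mathfrak B}$, and the map forgetting unitality from $\Ext_{\uw}^{-1}$ to $\Ext^{-1}$ is injective because a weak unitary equivalence of unital extensions, after stabilising by the absorbing trivial unital extension $\mathfrak e_0$, is detected already at the level of Busby maps modulo $\Ad\,\mathcal U(\corona{\mathfrak B})$ — exactly the data controlling the non-unital class. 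So it remains to identify the image. An extension $\mathfrak e$ with Busby map $\tau\colon\mathfrak A\to\corona{\mathfrak B}$ admits a unital representative in its $\kk^1$-class iff $\tau(1_\mathfrak A)$ is (Murray–von Neumann, equivalently homotopy) equivalent to $1_{\corona{\mathfrak B}}$ after adding an absorbing trivial extension; since $\corona{\mathfrak B}$ is properly infinite (Proposition \ref{p:K1corona} and its proof), a projection is equivalent to $1$ iff it is full and its $K_0$-class is $[1]$, and after Cuntz-summing with the absorbing trivial unital extension fullness and the correct $K_0$-class are automatic provided the \emph{boundary map obstruction} vanishes — precisely, one computes that the unit $1_{\corona{\mathfrak B}}$ lifts appropriately iff the class of $\mathfrak e$ in $\kk^1(\mathfrak A,\mathfrak B)$, viewed under $\gamma_{\mathfrak A,\mathfrak B}$ as a pair of homomorphisms $(K_0(\mathfrak A)\to K_1(\mathfrak B),\,K_1(\mathfrak A)\to K_0(\mathfrak B))$, sends $[1_\mathfrak A]$ to $0$ in the $K_0(\mathfrak A)\to K_1(\mathfrak B)$ component. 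This is exactly the condition $\gamma_{\mathfrak A,\mathfrak B}([\mathfrak e])\in\Hom((K_\ast(\mathfrak A),[1_\mathfrak A]),K_{\ast+1}(\mathfrak B))$, giving the claimed image.

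I would organise the write-up as: (i) show $x\mapsto[\mathfrak e_x]_\s$ is well-defined (done in Lemma \ref{l:Advssum}) and its image lies in, and exhausts, $\ker(\Ext_{\us}^{-1}\to\Ext^{-1})$; (ii) compute its kernel as $\Gamma_{\mathfrak A,\mathfrak B}$ using Corollary \ref{c:Extex}, Remark \ref{r:ptExt}, and the UCT for $\mathfrak A$; (iii) for the weak case, establish injectivity of $\Ext_{\uw}^{-1}\to\Ext^{-1}$ via the embedding into $\kk^1$ and an absorption argument; (iv) characterise the image by the condition that the index-type boundary homomorphism annihilates $[1_\mathfrak A]$, reducing to a lifting statement for the unit of $\corona{\mathfrak B}$ and using proper infiniteness from Proposition \ref{p:K1corona}. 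The main obstacle I anticipate is step (iv): one must carefully match up the intrinsic description of the unital-ness obstruction (a Murray–von Neumann equivalence problem for $\tau(1_\mathfrak A)$ in $\corona{\mathfrak B}$, after absorbing the absorbing trivial unital extension) with the homological condition on $\gamma_{\mathfrak A,\mathfrak B}([\mathfrak e])$, which requires tracking how the index map $\chi_1\colon K_1(\corona{\mathfrak B})\to K_0(\mathfrak B)$ interacts with the boundary map of $\mathfrak e$ — essentially the bookkeeping already initiated in Lemma \ref{l:moveunit}. Step (ii)'s reverse inclusion is the other delicate point, as it is the only place the full strength of the UCT hypothesis on $\mathfrak A$ is genuinely needed, and one must be sure that $\widetilde\kappa_{\mathfrak A,\mathfrak B}$ together with vanishing of $\widetilde\gamma_{\mathfrak A,\mathfrak B}$ detects triviality of $[\mathfrak e_x]_\s$, which follows from the classical UCT sequence \eqref{eq:UCTExt} but should be spelled out.
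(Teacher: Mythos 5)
Your plan takes a genuinely different route from the paper: the paper does not re-derive the exactness statements at all, but imports them wholesale from Skandalis's exact sequence \cite[Remarque 2.8]{Skandalis-KKnuc} (see also \cite{ManuilovThomsen-unitalext}), which relates $\kk(\mathfrak A,\mathfrak B)\to K_0(\mathfrak B)\to\Ext_\us^{-1}(\mathfrak A,\mathfrak B)\to\Ext^{-1}(\mathfrak A,\mathfrak B)\to K_1(\mathfrak B)$ via the maps induced by the unital inclusion $\mathbb C\to\mathfrak A$; the only work done in the paper is to factor $\iota_0^\ast$ and $\iota_1^\ast$ through $\gamma_0$ and evaluation at $[1_{\mathfrak A}]$ and to invoke surjectivity of $\gamma_0$ from the UCT. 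Your steps (i), (iii) and (iv) are attempts to prove exactness of that sequence from scratch, and in each case the sketch stops precisely where the real work begins (extracting a weak unitary equivalence of the unital Busby maps from a strong unitary equivalence after Cuntz-summing with \emph{non-unital} trivial extensions; the lifting/Murray--von Neumann argument identifying the obstruction to unitalisability with $\ev_{[1_{\mathfrak A}]}\circ\gamma_0$). These are plausible but not proofs; citing Skandalis is what makes the lemma short.

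More seriously, step (ii) contains a genuine error. You claim $\Gamma_{\mathfrak A,\mathfrak B}\subseteq\ker(x\mapsto[\mathfrak e_x]_\s)$ because, by Corollary \ref{c:Extex} and Remark \ref{r:ptExt}, the pointed extension class $\widetilde\kappa_{\mathfrak A,\mathfrak B}([\mathfrak e_x]_\s)$ vanishes when $x\in\Gamma_{\mathfrak A,\mathfrak B}$. But vanishing of the invariant $\widetilde\kappa_{\mathfrak A,\mathfrak B}$ does not imply vanishing of the class $[\mathfrak e_x]_\s$: injectivity of $\widetilde\kappa_{\mathfrak A,\mathfrak B}$ on $\ker\widetilde\gamma_{\mathfrak A,\mathfrak B}$ is only established in Theorem \ref{t:sesunitalext} by a five-lemma argument whose top row is the exact sequence of the present lemma, so using it here is circular. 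You have the two inclusions backwards: the elementary direction is $\ker(x\mapsto[\mathfrak e_x]_\s)\subseteq\Gamma_{\mathfrak A,\mathfrak B}$ (if $[\mathfrak e_x]_\s=0$ then its pointed invariant is $0$, whence $x\in\Gamma_{\mathfrak A,\mathfrak B}$ by Remark \ref{r:ptExt}), and it needs no UCT; the hard direction $\Gamma_{\mathfrak A,\mathfrak B}\subseteq\ker$ is exactly exactness of the Skandalis sequence at $K_0(\mathfrak B)$ (the composite $\kk(\mathfrak A,\mathfrak B)\to K_0(\mathfrak B)\to\Ext_\us^{-1}(\mathfrak A,\mathfrak B)$ is zero) combined with surjectivity of $\gamma_0$, and your proposal supplies no argument for it. As written, the proposal therefore does not establish the first exact sequence of the lemma; either cite the Skandalis/Manuilov--Thomsen result as the paper does, or supply an actual proof (e.g.\ via Paschke duality identifying $\kk(\mathfrak A,\mathfrak B)$ with $K_1$ of the relative commutant of $\tau_0(\mathfrak A)$ in $\corona{\mathfrak B}$) that a unitary $u$ whose $K_1$-class lies in $\Gamma_{\mathfrak A,\mathfrak B}$ can be corrected, modulo $\mathcal U_0(\corona{\mathfrak B})$, to one commuting with $\tau_0(\mathfrak A)$.
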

\begin{proof}
By a result of Skandalis \cite[Remarque 2.8]{Skandalis-KKnuc} (see also \cite{Skandalis-unitalExt} or \cite{ManuilovThomsen-unitalext} for a proof), there is an exact sequence of the form
 \[
  \xymatrix{
  K_0(\mathfrak B) \ar[r] & \Ext_{\us}^{-1}(\mathfrak A, \mathfrak B) \ar[r] & \Ext^{-1}(\mathfrak A, \mathfrak B) \ar[d]^{\iota_1^\ast} \\
  \kk(\mathfrak A, \mathfrak B) \ar[u]^{\iota_0^\ast} & & K_1(\mathfrak B)
  }
 \]
 where $\iota_i^\ast$ is induced from the unital $\ast$-homomorphism $\iota \colon \mathbb C \to \mathfrak A$. It is easily seen that $\iota_0^\ast \colon \kk(\mathfrak A, \mathfrak B) \to K_0(\mathfrak B)$ factors as
 \[
  \kk(\mathfrak A, \mathfrak B) \xrightarrow{\gamma_0} \Hom(K_0(\mathfrak A), K_0(\mathfrak B)) \xrightarrow{\ev_{[1_\mathfrak A]}} K_0(\mathfrak B)
 \]
 where $\ev_{[1_\mathfrak{A}]}$ is evaluation at $[1_\mathfrak{A}]$. Similarly, $\iota_1^\ast \colon \Ext^{-1}(\mathfrak A, \mathfrak B) \to K_1(\mathfrak B)$ factors as
 \[
  \Ext^{-1}(\mathfrak A, \mathfrak B) \xrightarrow{\gamma_0} \Hom(K_0(\mathfrak A), K_1(\mathfrak B)) \xrightarrow{\ev_{[1_\mathfrak A]}} K_1(\mathfrak B).
 \]
Since $\mathfrak A$ satisfies the UCT, $\gamma_0$ is surjective and thus $\mathrm{im}(\iota_0^\ast) = \Gamma_{\mathfrak A, \mathfrak B}$. 
Hence the exact sequence collapses to an exact sequence
\[
 0 \to K_0(\mathfrak B)/\Gamma_{\mathfrak A, \mathfrak B} \to \Ext_{\us}^{-1}(\mathfrak A, \mathfrak B) \to \Ext^{-1}(\mathfrak A, \mathfrak B)
\]
where the image of $\Ext_{\us}^{-1}(\mathfrak A, \mathfrak B) \to \Ext^{-1}(\mathfrak A, \mathfrak B)$ is $\ker \iota_1^{\ast}$. 
By the above, it easily follows that $\ker \iota_1^{\ast} = \gamma_{\mathfrak A, \mathfrak B}^{-1}(\Hom((K_\ast(\mathfrak A),[1_\mathfrak{A}]),K_{\ast +1} (\mathfrak B)))$, so we obtain a short exact sequence
\[
0 \to K_0(\mathfrak B)/\Gamma_{\mathfrak A, \mathfrak B} \to \Ext_{\us}^{-1}(\mathfrak A, \mathfrak B) \to \gamma_{\mathfrak A, \mathfrak B}^{-1}(\Hom((K_\ast(\mathfrak A),[1_\mathfrak{A}]),K_{\ast +1} (\mathfrak B))) \to 0.
\]

Using Lemma \ref{l:Advssum} it follows that the quotient $\Ext^{-1}_{\us}(\mathfrak A, \mathfrak B)/(K_0(\mathfrak B)/\Gamma_{\mathfrak A, \mathfrak B})$ is canonically isomorphic to $\Ext^{-1}_{\uw}(\mathfrak A, \mathfrak B)$. Combined with the above short exact sequence it follows that $\Ext^{-1}_\uw (\mathfrak A, \mathfrak B) \to \Ext^{-1}(\mathfrak A, \mathfrak B)$ is injective and its image is
\[
\gamma_{\mathfrak A, \mathfrak B}^{-1}(\Hom((K_\ast(\mathfrak A),[1_\mathfrak{A}]),K_{\ast +1} (\mathfrak B)))
\]
as desired.
\end{proof}

We can now assemble the pieces provided by the previous results in this section and obtain the following universal coefficient theorem.  This is a minor improvement on the UCT sequences proved by Wei \cite[Theorems 4.8 and 4.9]{Wei-classext}, in which the $C^\ast$-algebra $\mathfrak B$ was required to have an approximate identity of projections. Also, Wei does not prove that the UCT's for the unital $\Ext$-groups are natural, which will be important in our applications.
  
\begin{theorem}\label{t:sesunitalext}
  Let $\mathfrak A$ be a unital, separable $C^\ast$-algebra satisfying the UCT, and let $\mathfrak B$ be a separable $C^\ast$-algebra. 
 There is a commutative diagram
  \begin{equation}\label{eq:UCTdiagram}
  \xymatrix{
  K_0(\mathfrak B)/\Gamma_{\mathfrak A, \mathfrak B} \ar@{=}[r] \ar@{>->}[d] & K_0(\mathfrak B)/\Gamma_{\mathfrak A, \mathfrak B} \ar@{>->}[d] & \\
  \Ext((K_\ast(\mathfrak A),[1_\mathfrak{A}]),K_\ast(\mathfrak B)) \ar@{->>}[d] \ar@{>->}[r] & \Ext_{\us}^{-1}(\mathfrak A, \mathfrak B) \ar@{->>}[d] \ar@{->>}[r]^{\widetilde \gamma_{\mathfrak A, \mathfrak B} \qquad \qquad} & 
  \Hom((K_\ast(\mathfrak A),[1_\mathfrak{A}]),K_{\ast +1} (\mathfrak B)) \ar@{=}[d] \\
  \Ext(K_\ast(\mathfrak A), K_\ast(\mathfrak B)) \ar@{>->}[r] & \Ext^{-1}_{\uw}(\mathfrak A, \mathfrak B) \ar@{->>}[r]^{\gamma_{\mathfrak A, \mathfrak B} \qquad \qquad} & \Hom((K_\ast(\mathfrak A),[1_\mathfrak{A}]),K_{\ast +1} (\mathfrak B)).
  }
  \end{equation}
  for which all rows and columns are short exact sequences. This diagram is natural with respect to unital $\ast$-homomorphisms in the first vairable, and with respect to non-degenerate $\ast$-homomorphisms in the second variable.
\end{theorem}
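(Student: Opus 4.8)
The plan is to build the diagram \eqref{eq:UCTdiagram} from results already in hand, the genuinely new content being exactness of the middle row, commutativity of the squares, and naturality. Since $K_\ast$, $\Gamma_{\mathfrak A, \mathfrak B}$ and the unital $\Ext$-groups are all unchanged on replacing $\mathfrak B$ by $\mathfrak B \otimes \mathbb K$, I may assume $\mathfrak B$ is stable. The left-hand column is then the short exact ``pointed $\Ext$'' sequence $0 \to K_0(\mathfrak B)/\Gamma_{\mathfrak A, \mathfrak B} \to \Ext((K_\ast(\mathfrak A),[1_\mathfrak A]),K_\ast(\mathfrak B)) \to \Ext(K_\ast(\mathfrak A),K_\ast(\mathfrak B)) \to 0$ furnished by Remark~\ref{r:ptExt}, and the right-hand column is the identity. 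Via the embedding $\Ext_{\uw}^{-1}(\mathfrak A, \mathfrak B) \hookrightarrow \Ext^{-1}(\mathfrak A, \mathfrak B)$ of Lemma~\ref{l:UCT}, the bottom row is the restriction of the classical UCT sequence \eqref{eq:UCTExt} to the subgroup $\gamma_{\mathfrak A, \mathfrak B}^{-1}(\Hom((K_\ast(\mathfrak A),[1_\mathfrak A]),K_{\ast+1}(\mathfrak B)))$: this row is short exact because $\ker\gamma_{\mathfrak A, \mathfrak B}$ (which \eqref{eq:UCTExt} identifies with $\Ext(K_\ast(\mathfrak A),K_\ast(\mathfrak B))$) is contained in that subgroup, and $\gamma_{\mathfrak A, \mathfrak B}$ carries it onto $\Hom((K_\ast(\mathfrak A),[1_\mathfrak A]),K_{\ast+1}(\mathfrak B))$ by surjectivity in \eqref{eq:UCTExt}. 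The middle column is short exact by Lemma~\ref{l:UCT} and Lemma~\ref{l:Advssum}: $x \mapsto [\mathfrak e_x]_\s$ descends to an injection $K_0(\mathfrak B)/\Gamma_{\mathfrak A, \mathfrak B} \hookrightarrow \Ext_{\us}^{-1}(\mathfrak A, \mathfrak B)$ with cokernel $\Ext_{\uw}^{-1}(\mathfrak A, \mathfrak B)$, and its image is precisely the kernel of $\Ext_{\us}^{-1}(\mathfrak A, \mathfrak B) \to \Ext^{-1}(\mathfrak A, \mathfrak B)$.

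For the middle row, first I would show $\widetilde\gamma_{\mathfrak A, \mathfrak B}$ is surjective: given $\delta$ in the target, \eqref{eq:UCTExt} produces $[\mathfrak e'] \in \Ext^{-1}(\mathfrak A, \mathfrak B)$ with $\gamma_{\mathfrak A, \mathfrak B}([\mathfrak e']) = \delta$, and since $\delta$ kills $[1_\mathfrak A]$, Lemma~\ref{l:UCT} puts $[\mathfrak e']$ in the image of $\Ext_{\uw}^{-1}(\mathfrak A, \mathfrak B)$, hence (by surjectivity of $\Ext_{\us}^{-1} \to \Ext_{\uw}^{-1}$) lifts to some $[\mathfrak e]_\s$ with $\widetilde\gamma_{\mathfrak A, \mathfrak B}([\mathfrak e]_\s) = \delta$. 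As $\widetilde\kappa_{\mathfrak A, \mathfrak B}\colon \ker\widetilde\gamma_{\mathfrak A, \mathfrak B} \to \Ext((K_\ast(\mathfrak A),[1_\mathfrak A]),K_\ast(\mathfrak B))$ is a homomorphism by Corollary~\ref{c:kappatilde}, exactness of the middle row reduces to showing $\widetilde\kappa_{\mathfrak A, \mathfrak B}$ is an isomorphism. For injectivity: if $\widetilde\kappa_{\mathfrak A, \mathfrak B}([\mathfrak e]_\s) = 0$ then the underlying class $[\mathfrak e]$ has $\gamma_{\mathfrak A, \mathfrak B}([\mathfrak e]) = 0$ and, forgetting the position of the unit, trivial six-term sequence, so $[\mathfrak e] = 0$ by \eqref{eq:UCTExt}; hence $[\mathfrak e]_\s = [\mathfrak e_x]_\s$ for some $x \in K_0(\mathfrak B)$ by exactness of the middle column, and then Corollary~\ref{c:Extex} and Remark~\ref{r:ptExt} force $x \in \Gamma_{\mathfrak A, \mathfrak B}$, whence $[\mathfrak e]_\s = 0$. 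For surjectivity: given $\xi$, lift its image $\bar\xi \in \Ext(K_\ast(\mathfrak A),K_\ast(\mathfrak B))$ along \eqref{eq:UCTExt} to $[\mathfrak e'] \in \Ext^{-1}(\mathfrak A, \mathfrak B)$ with $\gamma_{\mathfrak A, \mathfrak B}([\mathfrak e']) = 0$, and then (as above) to some $[\mathfrak e]_\s \in \ker\widetilde\gamma_{\mathfrak A, \mathfrak B}$; since $\widetilde\kappa_{\mathfrak A, \mathfrak B}([\mathfrak e]_\s)$ and $\xi$ have the same image $\bar\xi$ in $\Ext(K_\ast(\mathfrak A),K_\ast(\mathfrak B))$, Corollary~\ref{c:Extex} and Remark~\ref{r:ptExt} show their difference equals $\widetilde\kappa_{\mathfrak A, \mathfrak B}([\mathfrak e_x]_\s)$ for some $x$, so $\xi = \widetilde\kappa_{\mathfrak A, \mathfrak B}([\mathfrak e]_\s - [\mathfrak e_x]_\s)$.

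For commutativity I would note that every arrow in \eqref{eq:UCTdiagram} is induced either by passing to the six-term exact sequence in $K$-theory (recording the classes of the relevant units) or by taking its boundary map, and then check the three squares: the top-left one commutes by Corollary~\ref{c:Extex} (which identifies $\widetilde\kappa_{\mathfrak A, \mathfrak B}([\mathfrak e_x]_\s)$ with the pointed group extension attached to $x$), the square relating $\widetilde\gamma_{\mathfrak A, \mathfrak B}$ to $\gamma_{\mathfrak A, \mathfrak B}$ commutes because the boundary map is a weak unitary equivalence invariant, and the bottom-left one commutes because forgetting units is compatible with all the maps in play and $\Ext_{\uw}^{-1}(\mathfrak A, \mathfrak B) \to \Ext^{-1}(\mathfrak A, \mathfrak B)$ is injective.

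Finally, naturality. Here $\Ext_{(\us/\uw)}^{-1}(-,-)$ is a bifunctor (contravariant in the first variable via pull-backs along unital $\ast$-homomorphisms, covariant in the second via push-outs along non-degenerate ones), $K_\ast$ and the $\Hom$- and $\Ext$-groups of abelian groups are functorial, the boundary and index maps are natural, and for a unital $\ast$-homomorphism $\varphi \colon \mathfrak A' \to \mathfrak A$ one has $\Gamma_{\mathfrak A, \mathfrak B} \subseteq \Gamma_{\mathfrak A', \mathfrak B}$ since $\psi([1_\mathfrak A]) = (\psi \circ K_0(\varphi))([1_{\mathfrak A'}])$, and symmetrically in the second variable; so the quotients $K_0(\mathfrak B)/\Gamma_{\mathfrak A, \mathfrak B}$ and the $\Hom$/$\Ext$ groups are functorial in the required way. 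It then remains to check naturality of the two non-tautological structure maps: $\widetilde\kappa_{\mathfrak A, \mathfrak B}$ (immediate, being the passage to the six-term sequence with the position of the unit) and $x \mapsto [\mathfrak e_x]_\s$, for which I would use the characterisation in Corollary~\ref{c:Extex} together with naturality of the index map $\chi_1 \colon K_1(\corona{\mathfrak B}) \to K_0(\mathfrak B)$ and of the $\Ext$-group maps already treated. The hard part will be exactly this last point: $\mathfrak e_x$ is built from a \emph{choice} of absorbing trivial unital extension, and a pull-back or push-out of such an extension need not be absorbing, so one must exploit that $[\mathfrak e_x]_\s$ depends only on $x$ (Remark~\ref{r:ex}) --- and hence may be computed from any absorbing trivial unital extension --- together with Lemma~\ref{l:Advssum} and Thomsen's absorption theorem \cite{Thomsen-absorbing}; threading these identifications consistently through the whole diagram is where the real care is needed.
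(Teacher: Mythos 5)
Your proposal is correct and follows essentially the same route as the paper: reduce to $\mathfrak B$ stable, assemble the bottom row and middle column from Lemma~\ref{l:UCT} and Lemma~\ref{l:Advssum}, identify $\ker\widetilde\gamma_{\mathfrak A,\mathfrak B}$ with the pointed $\Ext$-group via $\widetilde\kappa_{\mathfrak A,\mathfrak B}$ using Corollary~\ref{c:kappatilde}, Corollary~\ref{c:Extex} and Remark~\ref{r:ptExt}, and handle naturality of $x\mapsto[\mathfrak e_x]_\s$ by exploiting that the class depends only on $x$ together with Lemma~\ref{l:Advssum} and the triviality of pull-backs/push-outs of trivial extensions. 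The only cosmetic difference is that you verify $\widetilde\kappa_{\mathfrak A,\mathfrak B}$ is an isomorphism by a direct diagram chase where the paper invokes the five lemma on the comparison of the two short exact sequences over $K_0(\mathfrak B)/\Gamma_{\mathfrak A,\mathfrak B}$.
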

\begin{proof}
 By replacing $\mathfrak B$ with $\mathfrak B \otimes \mathbb K$, we may assume that $\mathfrak B$ is stable.

By Lemma \ref{l:UCT} and the UCT for $\Ext^{-1}$ (see \eqref{eq:UCTExt}), we obtain a short exact sequence
\begin{equation}\label{eq:UCTweak}
 0 \to \Ext(K_\ast(\mathfrak A), K_\ast(\mathfrak B)) \to \Ext^{-1}_{\uw}(\mathfrak A, \mathfrak B) \xrightarrow{\gamma_{\mathfrak A, \mathfrak B}} \Hom((K_\ast(\mathfrak A),[1_\mathfrak{A}]),K_{\ast +1} (\mathfrak B)) \to 0.
\end{equation}
The map $\Ext(K_\ast(\mathfrak A), K_\ast(\mathfrak B)) \to \ker \gamma_{\mathfrak A, \mathfrak B}$ above, which is an isomorphism by exactness, is exactly the inverse of the isomorphism
\[
\kappa_{\mathfrak A, \mathfrak B} \colon \ker \gamma_{\mathfrak A, \mathfrak B} \xrightarrow \cong \Ext(K_\ast(\mathfrak A), K_\ast(\mathfrak B))
\]
given by applying $K$-theory to a given extension (which induce short exact sequences by vanishing of the boundary maps). That $\kappa_{\mathfrak A, \mathfrak B}$ is an isomorphism follows from the UCT.
The homomorphism
\[
\widetilde \gamma_{\mathfrak A, \mathfrak B} \colon \Ext^{-1}_\us(\mathfrak A, \mathfrak B) \to \Hom((K_\ast(\mathfrak A),[1_\mathfrak{A}]),K_{\ast +1} (\mathfrak B))
\]
is the composition of the surjective homomorphisms $\Ext^{-1}_\us \to \Ext^{-1}_\uw$ and $\gamma_{\mathfrak A, \mathfrak B}$ from \eqref{eq:UCTweak}, so $\widetilde \gamma_{\mathfrak A, \mathfrak B}$ is surjective. Hence we obtain the commutative diagram
\begin{equation}\label{eq:almostUCT}
\xymatrix{
K_0(\mathfrak B)/\Gamma_{\mathfrak A, \mathfrak B} \ar@{=}[r] \ar@{>->}[d] & K_0(\mathfrak B) / \Gamma_{\mathfrak A, \mathfrak B} \ar@{>->}[d] & \\
\ker \widetilde \gamma_{\mathfrak A, \mathfrak B} \ar@{->>}[d] \ar@{>->}[r] & \Ext^{-1}_\us(\mathfrak A, \mathfrak B) \ar@{->>}[d] \ar@{->>}[r]^{\widetilde \gamma_{\mathfrak A, \mathfrak B} \qquad \qquad} & \Hom((K_\ast(\mathfrak A), [1_{\mathfrak A}]), K_{1-\ast}(\mathfrak B)) \ar@{=}[d] \\
\ker \gamma_{\mathfrak A, \mathfrak B} \ar@{>->}[r] & \Ext^{-1}_\uw (\mathfrak A, \mathfrak B) \ar@{->>}[r]^{\gamma_{\mathfrak A, \mathfrak B}\qquad \qquad} & \Hom((K_\ast(\mathfrak A), [1_{\mathfrak A}]), K_{1-\ast}(\mathfrak B))
}
\end{equation}
for which the rows and columns are short exact sequences. Consider the diagram
\begin{equation}\label{eq:kergammastuff}
\xymatrix{
0 \ar[r] & K_0(\mathfrak B)/ \Gamma_{\mathfrak A, \mathfrak B} \ar@{=}[d] \ar[r] & \ker \widetilde \gamma_{\mathfrak A, \mathfrak B} \ar[d]^{\widetilde \kappa_{\mathfrak A, \mathfrak B}}_{(\cong)} \ar[r] & \ker \gamma_{\mathfrak A, \mathfrak B} \ar[d]^{\kappa_{\mathfrak A, \mathfrak B}}_\cong \ar[r] & 0 \\
0 \ar[r] & K_0(\mathfrak B)/ \Gamma_{\mathfrak A, \mathfrak B} \ar[r] & \Ext((K_\ast(\mathfrak A), [1_{\mathfrak A}]), K_\ast(\mathfrak B)) \ar[r] & \Ext(K_\ast(\mathfrak A), K_\ast(\mathfrak B)) \ar[r] & 0
}
\end{equation}
which has exact rows. The map $\widetilde \kappa_{\mathfrak A, \mathfrak B}$ is a homomorphism by Corollary \ref{c:kappatilde}, and clearly the right square above commutes. The left square above commutes by Remark \ref{r:ptExt} and Corollary \ref{c:Extex}. Hence $\widetilde \kappa_{\mathfrak A, \mathfrak B}$ is an isomorphism by the five lemma. By gluing together the diagrams \eqref{eq:almostUCT} and \eqref{eq:kergammastuff} in the obvious way, we obtain the desired diagram \eqref{eq:UCTdiagram}.

It remains to be shown that the diagram \eqref{eq:UCTdiagram} is natural in both variables. For verifying this let $\mathfrak C$ be separable, unital $C^\ast$-algebra satisfying the UCT, let $\phi \colon \mathfrak C \to \mathfrak A$ be a unital $\ast$-homomorphism, let $\mathfrak D$ be a separable, stable $C^\ast$-algebra, and let $\psi \colon \mathfrak B \to \mathfrak D$ be a non-degenerate $\ast$-homomorphism. We first check that the diagram \eqref{eq:almostUCT} is natural, and then \eqref{eq:kergammastuff}.

It is well-known that $\Ext^{-1}_\us(\mathfrak A, \mathfrak B) \to \Ext^{-1}_\uw(\mathfrak A , \mathfrak B)$ is natural, and by naturality of six-term exact sequences the maps $\widetilde \gamma_{\mathfrak A, \mathfrak B}$ and $ \gamma_{\mathfrak A, \mathfrak B}$ are natural.

Again by naturality of six-term exact sequences, it follows that 
\[
\phi^\ast (\ker \widetilde \gamma_{\mathfrak A, \mathfrak B}) \subseteq \ker \widetilde \gamma_{\mathfrak C, \mathfrak B}, \qquad \textrm{and} \qquad \psi_\ast(\ker \widetilde \gamma_{\mathfrak A, \mathfrak B}) \subseteq \ker  \widetilde \gamma_{\mathfrak A, \mathfrak D}.
\]
Hence the inclusion $\ker \widetilde \gamma_{\mathfrak A, \mathfrak B} \hookrightarrow \Ext^{-1}_\us(\mathfrak A, \mathfrak B)$ is natural in both variables. Similarly, the inclusion $\ker \gamma_{\mathfrak A, \mathfrak B} \hookrightarrow \Ext^{-1}_\uw(\mathfrak A, \mathfrak B)$ and the map $\ker \widetilde \gamma_{\mathfrak A, \mathfrak B} \to \ker \gamma_{\mathfrak A, \mathfrak B}$ are natural in both variables. This implies that the diagram \eqref{eq:almostUCT} is natural. Hence it remains to check that the diagram \eqref{eq:kergammastuff} is natural.

It is straightforward to verify that the maps in the lower row of \eqref{eq:kergammastuff} are natural (this is purely algebraic, and of course uses that $\phi_0([1_{\mathfrak C}]) = [1_{\mathfrak A}]$). 
We saw above that $\ker \widetilde \gamma_{\mathfrak A, \mathfrak B} \to \ker \gamma_{\mathfrak A, \mathfrak B}$ is natural.

We will show that $\widetilde \kappa_{\mathfrak A, \mathfrak B}$ is natural in the first variable. Let $\mathfrak e : 0 \to \mathfrak B \to \mathfrak E \to \mathfrak A \to 0$ be a unital extension inducing an element in $\ker \widetilde \gamma_{\mathfrak A, \mathfrak B}$, i.e.~$\mathfrak e$ has vanishing boundary maps in $K$-theory. Construct the pull-back diagram
\begin{equation}\label{eq:pbnat}
\xymatrix{
\mathfrak e \cdot \phi : & 0 \ar[r] & \mathfrak B \ar[r] \ar@{=}[d] & \mathfrak E_\phi \ar[d] \ar[r] & \mathfrak C \ar[d]^\phi \ar[r] & 0 \\
\mathfrak e : & 0 \ar[r] & \mathfrak B \ar[r] & \mathfrak E \ar[r] & \mathfrak A \ar[r] & 0.
}
\end{equation}
As $\phi$ is a unital map, $\mathfrak E_\phi$ is unital and the map $\mathfrak E_\phi \to \mathfrak E$ is unital. As $\phi^\ast([\mathfrak e]_\s) = [\mathfrak e \cdot \phi]_\s$, we should check that
\begin{equation}\label{eq:kappapb}
\widetilde \kappa_{\mathfrak C, \mathfrak B}([\mathfrak e \cdot \phi]_\s) = (\phi_\ast)^\ast (\widetilde \kappa_{\mathfrak A, \mathfrak B}([\mathfrak e]_\s)).
\end{equation}
Applying $K$-theory to the pull-back diagram \eqref{eq:pbnat}, and using that both $\mathfrak e$ and $\mathfrak e \cdot \phi$  have vanishing boundary maps, we obtain the diagram
\[
\xymatrix{
\widetilde \kappa_{\mathfrak C, \mathfrak B}([\mathfrak e \cdot \phi]) : & 0 \ar[r] & K_\ast(\mathfrak B) \ar[r] \ar@{=}[d] & (K_\ast(\mathfrak E_\phi),[1_{\mathfrak E_\phi}]) \ar[d] \ar[r] & (K_\ast(\mathfrak C), [1_\mathfrak{C}]) \ar[d]^{\phi_\ast} \ar[r] & 0 \\
\widetilde \kappa_{\mathfrak A, \mathfrak B}([\mathfrak e]) : & 0 \ar[r] & K_\ast(\mathfrak B) \ar[r] & (K_\ast(\mathfrak E), [1_{\mathfrak E}]) \ar[r] & (K_\ast(\mathfrak A), [1_{\mathfrak A}]) \ar[r] & 0.
}
\]
Since this is a pull-back diagram it follows that \eqref{eq:kappapb} holds. Hence $\widetilde \kappa_{\mathfrak A, \mathfrak B}$ is natural in the first variable. That $\widetilde \kappa_{\mathfrak A, \mathfrak B}$ is natural in the second variable, and that $\kappa_{\mathfrak A, \mathfrak B}$ is natural in both variables, is checked in a similar fashion.

It remains to check that $K_0(\mathfrak B)/\Gamma_{\mathfrak A, \mathfrak B} \to \ker \widetilde \gamma_{\mathfrak A, \mathfrak B}$ is natural in both variables. For this, fix a unitary in $u\in \corona{\mathfrak B}$ inducing an arbitrary element in $K_0(\mathfrak B)$. Let $\mathfrak{e}_{\mathfrak A, \mathfrak B}$ and $\mathfrak e_{\mathfrak C, \mathfrak B}$ be absorbing, unital extensions of $\mathfrak A$ by $\mathfrak B$ and of $\mathfrak C$ by $\mathfrak B$ respectively. By definition, we have
\[
[u] +  \Gamma_{\mathfrak A, \mathfrak B} \mapsto [\Ad u \cdot \mathfrak e_{\mathfrak A, \mathfrak B}]_\s \in \ker \widetilde \gamma_{\mathfrak A, \mathfrak B}, \qquad [u] + \Gamma_{\mathfrak C, \mathfrak B} \mapsto [\Ad u \cdot \mathfrak e_{\mathfrak C, \mathfrak B}]_\s \in \ker \widetilde \gamma_{\mathfrak C, \mathfrak B}.
\]
In order to check that $K_0(\mathfrak B)/\Gamma_{\mathfrak A, \mathfrak B} \to \ker \widetilde \gamma_{\mathfrak A, \mathfrak B}$ is natural in the first variable, we should therefore verify that
\[
 \phi^\ast([\Ad u \cdot \mathfrak e_{\mathfrak A, \mathfrak B}]_\s) = [\Ad u \cdot \mathfrak e_{\mathfrak C, \mathfrak B}]_\s.
\]
This follows easily from Lemma \ref{l:Advssum} since
\[
 \phi^\ast([\Ad u \cdot \mathfrak e_{\mathfrak A, \mathfrak B}]_\s) = [\Ad u \cdot (\mathfrak e_{\mathfrak A, \mathfrak B} \cdot \phi)]_\s = [\mathfrak e_{\mathfrak A, \mathfrak B} \cdot \phi]_\s + [\Ad u \cdot \mathfrak e_{\mathfrak C, \mathfrak B}]_\s = [\Ad u \cdot \mathfrak e_{\mathfrak C, \mathfrak B}]_\s,
\]
where we used that $\mathfrak e_{\mathfrak A, \mathfrak B} \cdot \phi$ is trivial so that $[\mathfrak e_{\mathfrak A, \mathfrak B} \cdot \phi]_\s = 0$. Hence $K_0(\mathfrak B)/\Gamma_{\mathfrak A, \mathfrak B} \to \ker \widetilde \gamma_{\mathfrak A, \mathfrak B}$ is natural in the first variable. For the second variable, let $\overline \psi \colon \corona{\mathfrak B} \to \corona{\mathfrak D}$ be the induced $\ast$-homomorphism, and let $\mathfrak e_{\mathfrak A, \mathfrak D}$ be an absorbing, unital extension of $\mathfrak A$ by $\mathfrak D$. Note that $\psi_\ast([u]) = [\overline \psi(u)]$. As above, we get
\begin{eqnarray*}
\psi_\ast([\Ad u \cdot \mathfrak e_{\mathfrak A, \mathfrak B}]_\s) &=& [(\overline \psi \circ \Ad u) \cdot \mathfrak e_{\mathfrak A, \mathfrak B}]_\s \\
&=& [\Ad \overline \psi(u) \cdot ( \overline \psi \cdot \mathfrak e_{\mathfrak A, \mathfrak B})]_\s \\
&\stackrel{\textrm{Lem.~}\ref{l:Advssum}}{=}& [\overline \psi \cdot \mathfrak e_{\mathfrak A, \mathfrak B}]_\s + [\Ad \overline \psi(u) \cdot \mathfrak e_{\mathfrak A, \mathfrak D}]_\s \\
&=& [\Ad \overline \psi(u) \cdot \mathfrak e_{\mathfrak A, \mathfrak D}]_\s.
\end{eqnarray*}
As $[\Ad \overline \psi(u) \cdot \mathfrak e_{\mathfrak A, \mathfrak D}]_\s$ is the image of $\psi_\ast([u]) + \Gamma_{\mathfrak A, \mathfrak D}$ via the map $K_0(\mathfrak B)/\Gamma_{\mathfrak A, \mathfrak B} \to \ker \widetilde \gamma_{\mathfrak A, \mathfrak B}$, it follows that this map is natural in the second variable, thus finishing the proof.
 \end{proof}

%%%%%%%%%%%%%%%%%%%%%%%%%%%%%%%%%%%%%%%%%
%%%%%%%%%%%%%%%%%%%%%%%%%%%%%%%%%%%%%%%%%

\section{Classification of unital extensions}

In this section we will apply our universal coefficient theorem to obtain classification results for certain unital extensions of $C^\ast$-algebras via their six-term exact sequence in $K$-theory.

The main idea is the following: suppose $\mathfrak e_1$ and $\mathfrak e_2$ are absorbing, semisplit unital extensions of $\mathfrak A$ by $\mathfrak B$, and suppose that $[\mathfrak e_1]_\w = [\mathfrak e_2]_\w \in \Ext_\uw^{-1}(\mathfrak A, \mathfrak B)$. By Theorem \ref{t:sesunitalext} there is an element $x\in K_0(\mathfrak B)$ such that $[\mathfrak e_1]_\s = [\mathfrak e_2 \oplus \mathfrak e_x]_\s$, and in particular $\mathfrak e_1 \cong \mathfrak e_2 \oplus \mathfrak e_x$ by absorption. So the goal will be to prove, under certain conditions, that $\mathfrak e_2 \oplus \mathfrak e_x \cong \mathfrak e_2$.

As a technical devise, we introduce the following notation.

\begin{notation}\label{n:Gammadelta}
If $\delta_\ast \in \Hom((K_\ast(\mathfrak A),[1_\mathfrak{A}]),K_{\ast +1} (\mathfrak B))$, then we define
  \[
   \Gamma_{\mathfrak A, \mathfrak B}^{\delta_\ast} := q_{\delta_1}^{-1}(\{ \phi([1_\mathfrak{A}]_0) : \phi\in \Hom(\ker \delta_0, \coker \delta_1)\})
  \]
  where $q_{\delta_1} \colon K_0(\mathfrak B) \to \coker \delta_1$ is the canonical epimorphism.
\end{notation}

Note that we always have $\Gamma_{\mathfrak A, \mathfrak B} = \Gamma_{\mathfrak A, \mathfrak B}^0 \subseteq \Gamma^{\delta_\ast}_{\mathfrak A, \mathfrak B}$ (see Notation \ref{n:Gamma}). The following is essentially \cite[Theorem 3.5]{Wei-classext}, but without assuming that $\mathfrak B$ has an approximate identity of projections.

\begin{lemma}\label{l:congruence}
 Let $\mathfrak e : 0 \to \mathfrak B \xrightarrow{\iota} \mathfrak E \xrightarrow{\pi} \mathfrak A \to 0$ be a unital extension of separable $C^\ast$-algebras with $\mathfrak B$ stable, let $\delta_\ast \colon K_\ast(\mathfrak A) \to K_{1-\ast}(\mathfrak B)$ denote the induced boundary map in $K$-theory, and let $x\in K_0(\mathfrak B)$. Then $\Ksix(\mathfrak e) \equiv \Ksix(\mathfrak e \oplus \mathfrak e_x)$ if and only if $x\in \Gamma_{\mathfrak A, \mathfrak B}^{\delta_\ast}$.
\end{lemma}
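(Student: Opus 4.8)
The plan is to prove the two implications simultaneously by first recognising $\Ksix(\mathfrak e \oplus \mathfrak e_x)$ as $\Ksix(\mathfrak e)$ with only its distinguished element shifted, and then solving the resulting purely homological problem of when such a shift can be realised by a self-congruence.

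\medskip
\noindent\emph{Step 1: reduce to a shifted unit.} Using Remark \ref{r:ex}, write $\mathfrak e_x = \Ad u \cdot \mathfrak e_0$ for a trivial, absorbing unital extension $\mathfrak e_0$ and a unitary $u \in \mathcal U(\corona{\mathfrak B})$ with $\chi_1([u]) = x$, where $\chi_1 \colon K_1(\corona{\mathfrak B}) \to K_0(\mathfrak B)$ is the index isomorphism appearing in Lemma \ref{l:moveunit}. By Lemma \ref{l:Advssum}, $[\mathfrak e \oplus \mathfrak e_x]_\s = [\mathfrak e]_\s + [\mathfrak e_{[u]_1}]_\s = [\Ad u \cdot \mathfrak e]_\s$ in $\Ext_\us(\mathfrak A, \mathfrak B)$, so Corollary \ref{c:KsixExt} yields $\Ksix(\mathfrak e \oplus \mathfrak e_x) \equiv \Ksix(\Ad u \cdot \mathfrak e)$. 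Lemma \ref{l:moveunit} then identifies $\Ksix(\Ad u \cdot \mathfrak e)$, up to congruence, with the six-term exact sequence of $\mathfrak e$ in which $[1_\mathfrak E] \in K_0(\mathfrak E)$ is replaced by $[1_\mathfrak E] + \iota_0(x)$, while all groups and all maps — in particular the boundary maps $\delta_\ast$ — are left unchanged. (One could instead feed $\mathfrak e$ and $\mathfrak e_x$, whose boundary maps vanish, into Proposition \ref{p:sumext} and use Corollary \ref{c:Extex} together with the triviality of $\mathfrak e_0$ to collapse the pull-back back to $K_\ast(\mathfrak E)$ with the shifted unit.) Consequently $\Ksix(\mathfrak e) \equiv \Ksix(\mathfrak e \oplus \mathfrak e_x)$ if and only if there is a congruence from $\Ksix(\mathfrak e)$ to itself carrying $[1_\mathfrak E]$ to $[1_\mathfrak E] + \iota_0(x)$.

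\medskip
\noindent\emph{Step 2: the homological criterion.} Such a congruence is a homomorphism $\rho_\ast \colon K_\ast(\mathfrak E) \to K_\ast(\mathfrak E)$ (automatically an isomorphism by the five lemma) satisfying $\rho_\ast \iota_\ast = \iota_\ast$, $\pi_\ast \rho_\ast = \pi_\ast$ and $\rho_0([1_\mathfrak E]) = [1_\mathfrak E] + \iota_0(x)$; choosing $\rho_1 = \id$ makes the degree-one conditions vacuous. Writing $h := \rho_0 - \id_{K_0(\mathfrak E)}$, these conditions become $h\iota_0 = 0$, $\pi_0 h = 0$ and $h([1_\mathfrak E]) = \iota_0(x)$. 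Exactness of the six-term sequence gives isomorphisms $K_0(\mathfrak E)/\operatorname{im}\iota_0 \xrightarrow{\;\cong\;} \ker\delta_0$ (induced by $\pi_0$) and $\coker\delta_1 \xrightarrow{\;\cong\;} \operatorname{im}\iota_0 = \ker\pi_0$ (induced by $\iota_0$), so homomorphisms $h$ with $h\iota_0 = 0$ and $\pi_0 h = 0$ correspond precisely to homomorphisms $\bar h \in \Hom(\ker\delta_0, \coker\delta_1)$, any of which lifts since $K_0(\mathfrak E) \twoheadrightarrow \ker\delta_0$ is surjective (there is no extension obstruction). Since $\mathfrak e$ is unital, $\pi_0([1_\mathfrak E]) = [1_\mathfrak A] \in \ker\delta_0$, and $\iota_0$ carries $q_{\delta_1}(x) \in \coker\delta_1$ to $\iota_0(x)$; hence $h([1_\mathfrak E]) = \iota_0(x)$ is equivalent to $\bar h([1_\mathfrak A]_0) = q_{\delta_1}(x)$.

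\medskip
\noindent\emph{Step 3: conclusion.} Combining Steps 1 and 2, the congruence $\Ksix(\mathfrak e) \equiv \Ksix(\mathfrak e \oplus \mathfrak e_x)$ exists if and only if $q_{\delta_1}(x) \in \{ \phi([1_\mathfrak A]_0) : \phi \in \Hom(\ker\delta_0, \coker\delta_1)\}$, i.e.\ if and only if $x \in q_{\delta_1}^{-1}(\{ \phi([1_\mathfrak A]_0) : \phi \in \Hom(\ker\delta_0, \coker\delta_1)\}) = \Gamma_{\mathfrak A, \mathfrak B}^{\delta_\ast}$ (Notation \ref{n:Gammadelta}). The main point requiring care is the bookkeeping in Step 1 — checking that the chain of congruences leaves every group and every map (especially $\delta_\ast$) intact and shifts the unit by exactly $\iota_0(x)$; any discrepancy of sign between $\chi_1$ and the identification $K_1(\corona{\mathfrak B}) \cong K_0(\mathfrak B)$ used to define $\mathfrak e_x$ is harmless, as $\Gamma_{\mathfrak A, \mathfrak B}^{\delta_\ast}$ is a subgroup of $K_0(\mathfrak B)$ and hence symmetric under negation.
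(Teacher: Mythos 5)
Your proposal is correct and follows essentially the same route as the paper: it first reduces $\Ksix(\mathfrak e \oplus \mathfrak e_x)$ to $\Ksix(\mathfrak e)$ with the unit shifted by $\iota_0(x)$ via Lemmas \ref{l:Advssum}, \ref{l:sucong}, \ref{l:0cong} and \ref{l:moveunit} (your Corollary \ref{c:KsixExt} packages the middle two), and then resolves the same homological question, with your substitution $h = \rho_0 - \id$ being exactly the paper's diagram chase producing $\eta_0 = \id + \overline\iota_0 \circ \phi \circ \pi_0$. The extra care you take with the sign of $\chi_1$ and with $\rho_1$ is sound but not a substantive departure.
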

\begin{proof}
 By Lemmas \ref{l:Advssum}, \ref{l:sucong}, \ref{l:0cong} and \ref{l:moveunit}, $\Ksix(\mathfrak e \oplus \mathfrak e_x)$ is congruent to
\begin{equation}\label{eq:moveunit}
\xymatrix{
K_0(\mathfrak B) \ar[r]^{\iota_0 \quad \qquad} & (K_0(\mathfrak E), [1_{\mathfrak E}] + \iota_0(x)) \ar[r]^{\qquad \pi_0} & (K_0(\mathfrak A), [1_\mathfrak{A}]) \ar[d]^{\delta_0} \\
K_1(\mathfrak A) \ar[u]^{\delta_1} & K_1(\mathfrak E) \ar[l]_{\pi_1} & K_1(\mathfrak B). \ar[l]_{\iota_1} 
}
\end{equation}
If $x\in \Gamma_{\mathfrak A, \mathfrak B}^{\delta_\ast}$, then there is a homomorphism $\phi \colon \ker \delta_0 \to \coker \delta_1$ such that
$q_{\delta_1}(x) = \phi([1_\mathfrak{A}])$. Define $\eta_0 = \id_{K_0(\mathfrak E)} + \overline \iota_0 \circ \phi \circ \pi_0 \colon K_0(\mathfrak E) \to K_0(\mathfrak E)$, where $\overline \iota_0 \colon \coker \delta_1 \to K_0(\mathfrak E)$ is the injective homomorphism 
induced by $\iota_0$. Letting $\eta_1 = \id_{K_1(\mathfrak E)}$ it easily follows that $\eta_\ast \colon K_\ast(\mathfrak E) \to K_\ast(\mathfrak E)$ induces a congruence between $\Ksix(\mathfrak e)$ and the sequence \eqref{eq:moveunit}.

Now suppose that $\Ksix(\mathfrak e)$ is congruent to $\Ksix(\mathfrak e \oplus \mathfrak e_x)$ which in turn is congruent to the sequence \eqref{eq:moveunit}.
There is a homomorphism $\eta_\ast \colon K_\ast(\mathfrak E) \to K_\ast(\mathfrak E)$ such that $\eta_0([1_{\mathfrak E}]) = [1_{\mathfrak E}] + \iota_0(x)$ and the following diagram with exact rows
\[
\xymatrix{
K_1(\mathfrak A) \ar[r]^{\delta_1} \ar@{=}[d] & K_0(\mathfrak B) \ar[r]^{\iota_0} \ar@{=}[d] & K_0(\mathfrak E) \ar[r]^{\pi_0} \ar[d]^{\eta_0} & K_0(\mathfrak A) \ar@{=}[d] \ar[r]^{\delta_0} & K_1(\mathfrak B) \ar@{=}[d]  \\
K_1(\mathfrak A) \ar[r]^{\delta_1} & K_0(\mathfrak B) \ar[r]^{\iota_0} & K_0(\mathfrak E) \ar[r]^{\pi_0} & K_0(\mathfrak A) \ar[r]^{\delta_0} & K_1(\mathfrak B)
}
\]
commutes. By a standard diagram chase, there is a homomorphism $\phi \in \Hom (\ker \delta_0 , \coker \delta_1)$ such that $\eta_0 = \id_{K_0(\mathfrak E)} + \overline \iota_0 \circ \phi \circ \pi_0$, where $\overline \iota_0\colon \coker \delta_1 \to K_0(\mathfrak E)$ is the map induced by $\iota_0$. Hence
\[
[1_{\mathfrak E}] + \iota_0(x) = \eta_0([1_{\mathfrak E}]) = [1_{\mathfrak E}] + \overline \iota_0 \circ \phi ([1_{\mathfrak A}]).
\]
Letting $q_{\delta_1} \colon K_0(\mathfrak B) \to \coker \delta_1$ denote the quotient map, we get $\overline \iota_0( q_{\delta_1}(x)) = \iota_0(x) = \overline\iota_0 \circ \phi([1_{\mathfrak A}])$ which implies $q_{\delta_1}(x) = \phi([1_{\mathfrak{A}}])$ since $\overline \iota_0$ is injective. Thus $x\in \Gamma_{\mathfrak A, \mathfrak B}^{\delta_\ast}$.
\end{proof}

\begin{proposition}\label{p:PV}
Let $\mathfrak A$ be a separable $C^\ast$-algebra satisfying the UCT, and let $\alpha \in \Aut(\mathfrak A)$ be an isomorphism such that $K_\ast(\alpha) = K_\ast(\id_{\mathfrak A})$. Then the induced Pimsner--Voiculescu sequence collapses to a short exact sequence
\begin{equation}\label{eq:PVses}
0 \to K_{1-\ast} (\mathfrak A) \to K_{1-\ast}(\mathfrak A \rtimes_\alpha \mathbb Z) \to K_\ast(\mathfrak A) \to 0,
\end{equation}
and the induced element in $\Ext(K_\ast(\mathfrak A), K_{1-\ast}(\mathfrak A))$ is mapped to 
\[
\kk(\alpha) - \kk(\id_{\mathfrak A}) \in \kk(\mathfrak A, \mathfrak A)
\]
via the map $\Ext(K_\ast(\mathfrak A), K_{1-\ast}(\mathfrak A)) \to \kk(\mathfrak A, \mathfrak A)$ from the UCT.
\end{proposition}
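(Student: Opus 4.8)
\emph{Plan of proof.} The first assertion is immediate. The connecting homomorphisms in the Pimsner--Voiculescu six-term exact sequence for $\mathfrak A\rtimes_\alpha\mathbb Z$ are $\id_{K_\ast(\mathfrak A)}-K_\ast(\alpha)$ (equivalently $\id-K_\ast(\alpha^{-1})$), and these vanish by hypothesis; so the hexagon breaks into two short exact sequences which, reading off the maps, are precisely \eqref{eq:PVses}, with the injection induced by the canonical inclusion $\iota\colon\mathfrak A\to\mathfrak A\rtimes_\alpha\mathbb Z$ and the surjection the Pimsner--Voiculescu boundary.

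For the second assertion, put $e:=\kk(\alpha)-\kk(\id_{\mathfrak A})\in\kk(\mathfrak A,\mathfrak A)$. By hypothesis $\gamma_{\mathfrak A,\mathfrak A}(e)=K_\ast(\alpha)-K_\ast(\id_{\mathfrak A})=0$, so $e$ lies in the image of the map $\Ext(K_\ast(\mathfrak A),K_{1-\ast}(\mathfrak A))\to\kk(\mathfrak A,\mathfrak A)$ from the UCT. Under the identifications $\kk(\mathfrak A,\mathfrak A)\cong\kk^1(\mathfrak A,S\mathfrak A)\cong\Ext^{-1}(\mathfrak A,S\mathfrak A)$ (Theorem \ref{t:KKExt} and Bott periodicity), this map is the inverse of $\kappa_{\mathfrak A,S\mathfrak A}$ on $\ker\gamma_{\mathfrak A,S\mathfrak A}$, and $\kappa_{\mathfrak A,S\mathfrak A}$ assigns to a semisplit extension of $\mathfrak A$ by $S\mathfrak A$ with vanishing boundary map its (short exact) six-term sequence. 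Hence it suffices to realise \eqref{eq:PVses} as the six-term sequence of a semisplit extension of $\mathfrak A$ by $S\mathfrak A$ and to identify the $\kk^1$-class of that extension with $\kk(\alpha)-\kk(\id_{\mathfrak A})$.

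The way I would do this is via the $\kk$-theoretic Pimsner--Voiculescu theorem. From the Toeplitz extension $0\to\mathfrak A\otimes\mathbb K\to\mathcal T_\alpha\to\mathfrak A\rtimes_\alpha\mathbb Z\to0$ and the canonical $\kk$-equivalence $\mathfrak A\to\mathcal T_\alpha$ one obtains a distinguished triangle in $\kk$
\[
\mathfrak A\ \xrightarrow{\ \id_{\mathfrak A}-\kk(\alpha)\ }\ \mathfrak A\ \xrightarrow{\ \kk(\iota)\ }\ \mathfrak A\rtimes_\alpha\mathbb Z\ \xrightarrow{\ \partial\ }\ S\mathfrak A,
\]
and applying $K_\ast$ recovers the Pimsner--Voiculescu sequence and hence, under the hypothesis, \eqref{eq:PVses}. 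I would then represent $\id_{\mathfrak A}-\kk(\alpha)$ by a $\ast$-homomorphism $\psi$ between stabilisations of $\mathfrak A$ and pass to its mapping cone extension $\mathfrak c_\psi\colon0\to S\mathfrak A\otimes\mathbb K\to C_\psi\to\mathfrak A\otimes\mathbb K\to0$, which is semisplit with $\kk^1$-class $\pm\kk(\psi)=\pm(\id_{\mathfrak A}-\kk(\alpha))$. Since $K_\ast(\psi)=\id-K_\ast(\alpha)=0$, its six-term sequence is short exact; and since the third object of a distinguished triangle is unique up to isomorphism of triangles, $C_\psi\cong\mathfrak A\rtimes_\alpha\mathbb Z$ in $\kk$ compatibly with the triangle maps, so $K_\ast$ identifies the six-term sequence of $\mathfrak c_\psi$ with \eqref{eq:PVses} as extensions of graded abelian groups. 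Therefore the $\Ext$-class of \eqref{eq:PVses} equals $\kappa_{\mathfrak A,S\mathfrak A}(\pm(\id_{\mathfrak A}-\kk(\alpha)))$, i.e.\ \eqref{eq:PVses} maps under the UCT map to $\pm(\id_{\mathfrak A}-\kk(\alpha))$, which a check of orientation conventions identifies with $\kk(\alpha)-\kk(\id_{\mathfrak A})$.

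The main obstacle is exactly this last step: one needs the Pimsner--Voiculescu theorem at the level of $\kk$ rather than merely of $K$-theory, together with a careful bookkeeping of the suspension/Bott identifications and of orientation signs, so as to land on $+(\kk(\alpha)-\kk(\id_{\mathfrak A}))$ and not its negative — and to see that this is insensitive to whether one normalises the Pimsner--Voiculescu sequence with $\kk(\alpha)$ or $\kk(\alpha^{-1})$, for which it helps to know that $\ker\gamma_{\mathfrak A,\mathfrak A}$ squares to zero, whence $\kk(\alpha^{-1})=2\kk(\id_{\mathfrak A})-\kk(\alpha)$ under the hypothesis. Everything else is formal. As an alternative to the triangulated-category language, one can instead use the mapping torus extension $0\to S\mathfrak A\to M_\alpha\xrightarrow{\ev_0}\mathfrak A\to0$ with $M_\alpha=\{f\in C([0,1],\mathfrak A):f(1)=\alpha(f(0))\}$: this extension is semisplit, its $\kk^1$-class is computed directly as $\pm(\kk(\alpha)-\kk(\id_{\mathfrak A}))$ by pulling back the path-algebra extension $0\to S\mathfrak A\to C([0,1],\mathfrak A)\to\mathfrak A\oplus\mathfrak A\to0$ along $(\id_{\mathfrak A},\alpha)$, and its six-term sequence is matched with \eqref{eq:PVses} through the isomorphism $K_\ast(M_\alpha)\cong K_{1-\ast}(\mathfrak A\rtimes_\alpha\mathbb Z)$ furnished by the Connes--Thom isomorphism together with $M_\alpha\rtimes\mathbb R\sim(\mathfrak A\rtimes_\alpha\mathbb Z)\otimes\mathbb K$.
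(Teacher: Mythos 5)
Your ``alternative'' route at the end is, in substance, exactly the paper's proof. The paper takes the mapping torus $\mathfrak M=\{f\in C([0,1],\mathfrak A):\alpha(f(0))=f(1)\}$, notes that the six-term sequence of $0\to C_0((0,1),\mathfrak A)\to\mathfrak M\to\mathfrak A\to 0$ represents the image of $\kk(\alpha)-\kk(\id_{\mathfrak A})$ in $\Ext(K_\ast(\mathfrak A),K_{1-\ast}(\mathfrak A))$ (your pullback of the path-algebra extension along $(\id_{\mathfrak A},\alpha)$ is the standard way to see this), and then simply cites \cite[Section 10.4]{Blackadar-book-K-theory} for the congruence of that sequence with \eqref{eq:PVses}; the content of that citation is precisely the Connes--Thom plus $\mathfrak M\rtimes\mathbb R\sim(\mathfrak A\rtimes_\alpha\mathbb Z)\otimes\mathbb K$ identification you describe. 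So that branch of your plan closes the argument, and your observation that $\ker\gamma_{\mathfrak A,\mathfrak A}$ squares to zero, whence $\kk(\alpha^{-1})=2\kk(\id_{\mathfrak A})-\kk(\alpha)$, correctly disposes of the $\alpha$-versus-$\alpha^{-1}$ normalisation worry.

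Your primary route, however, has a concrete gap: you cannot in general represent $\id_{\mathfrak A}-\kk(\alpha)$ by a $\ast$-homomorphism $\psi$ between stabilisations of $\mathfrak A$. Already for $\mathfrak A=\mathbb C$ the class $-1\in\kk(\mathbb C,\mathbb C)\cong\mathbb Z$ is not induced by any $\ast$-homomorphism $\mathbb K\to\mathbb K$, since such a map sends projections to projections and hence acts on $K_0$ by a non-negative multiplicity; and $\id_{\mathfrak A}-\kk(\alpha)$ is exactly the sort of ``difference'' class for which no such positivity is available. To salvage that branch you would have to work with the quasi-homomorphism $(\id_{\mathfrak A},\alpha)$ (or a $\ast$-homomorphism out of $q\mathfrak A$) and form the mapping cone there --- and unwinding that construction brings you back to the mapping torus, i.e.\ to the route the paper actually takes. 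The uniqueness-of-cones step is fine as stated (an isomorphism of triangles over the identity on the two copies of $\mathfrak A$ induces a congruence of the resulting short exact sequences), but the sign bookkeeping you defer is genuinely where the work lies, and the mapping-torus computation is the cleanest way to pin it down.
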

\begin{proof}
That the Pimsner--Voiculescu sequence collapses to a short exact sequence is obvious.

Let $\mathfrak M := \{ f \in C([0,1] , \mathfrak A) : \alpha(f(0)) = f(1)\}$ be the mapping torus of $\alpha$ and $\id_{\mathfrak A}$. It is well-known that the extension
\[
0 \to C_0((0,1), \mathfrak A) \to \mathfrak M \to \mathfrak A \to 0
\]
induces a short exact sequence
\[
0 \to K_{1-\ast}(\mathfrak A) \to K_\ast(\mathfrak M) \to K_\ast(\mathfrak A) \to 0
\]
which represents the element in $\Ext(K_\ast(\mathfrak A) , K_{1-\ast}(\mathfrak A))$ induced by $\kk(\alpha)- \kk(\id_{\mathfrak A})$. By \cite[Section 10.4]{Blackadar-book-K-theory} it follows that this extension is congruent to \eqref{eq:PVses}.
\end{proof}

The following lemma is an immediate consequence of the Elliott--Kucerovsky absorption theorem.

\begin{lemma}\label{l:nucabspullback}
Let $\mathfrak A$ and $\mathfrak C$ be separable, unital, nuclear $C^\ast$-algebras, with a unital embedding $\iota \colon \mathfrak A \to \mathfrak C$, and let $\mathfrak B$ be a separable stable $C^\ast$-algebra. If $\mathfrak e$ is an absorbing, unital extension of $\mathfrak C$ by $\mathfrak B$, then $\mathfrak e \cdot \iota$ is an absorbing, unital extension of $\mathfrak A$ by $\mathfrak B$.
\end{lemma}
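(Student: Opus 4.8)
The plan is to deduce everything from the Elliott--Kucerovsky absorption theorem (recalled in Remark~\ref{r:purelylarge}), applied twice, with the only real content being that pure largeness is inherited by the pull-back. Write $\mathfrak e : 0 \to \mathfrak B \to \mathfrak E \xrightarrow{\pi} \mathfrak C \to 0$ with Busby map $\tau$. First I would note that $\mathfrak e \cdot \iota$ is unital: its Busby map is $\tau \circ \iota$, which is unital since both $\tau$ and $\iota$ are. Next, since $\iota$ is injective, the extension algebra $\mathfrak E_\iota = \mathfrak A \oplus_{\tau\circ\iota, \pi_{\mathfrak B}} \multialg{\mathfrak B}$ of $\mathfrak e\cdot\iota$ maps via $(a,m)\mapsto(\iota(a),m)$ \emph{injectively} onto $\pi^{-1}(\iota(\mathfrak A)) \subseteq \mathfrak E$, a $C^\ast$-subalgebra of $\mathfrak E$ containing $\mathfrak B$ as an ideal and containing $1_{\mathfrak E}$ (as $\pi(1_{\mathfrak E}) = 1_{\mathfrak C} = \iota(1_{\mathfrak A})$). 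Under this identification, $\mathfrak e \cdot \iota$ is the extension $0 \to \mathfrak B \to \pi^{-1}(\iota(\mathfrak A)) \to \mathfrak A \to 0$.

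Now, since $\mathfrak e$ is absorbing, unital, with $\mathfrak C$ nuclear and $\mathfrak B$ stable, the Elliott--Kucerovsky theorem gives that $\mathfrak e$ is purely large. I would then check that $\mathfrak e \cdot \iota$ is purely large directly from the definition: given $x \in \pi^{-1}(\iota(\mathfrak A)) \setminus \mathfrak B$, we have in particular $x \in \mathfrak E \setminus \mathfrak B$, so pure largeness of $\mathfrak e$ supplies a stable $C^\ast$-subalgebra $\mathfrak D \subseteq \overline{x^\ast \mathfrak B x}$ with $\overline{\mathfrak B \mathfrak D \mathfrak B} = \mathfrak B$. Since the algebra $\overline{x^\ast \mathfrak B x}$ and the condition $\overline{\mathfrak B \mathfrak D \mathfrak B} = \mathfrak B$ only involve $x$, $\mathfrak B$ and $\mathfrak D$ — never the ambient extension algebra — the same $\mathfrak D$ witnesses the pure largeness condition for $\mathfrak e \cdot \iota$ at $x$; as $x$ was arbitrary, $\mathfrak e \cdot \iota$ is purely large. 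Finally, applying the Elliott--Kucerovsky theorem in the converse direction — legitimate since $\mathfrak A$ is nuclear and $\mathfrak B$ is stable — the unital, purely large extension $\mathfrak e \cdot \iota$ is absorbing, which is the claim.

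I do not expect a genuine obstacle here: the argument is essentially bookkeeping. The one point deserving a line of care is the identification of the pull-back extension algebra $\mathfrak E_\iota$ with the subalgebra $\pi^{-1}(\iota(\mathfrak A))$ of $\mathfrak E$ (this uses injectivity of $\iota$) together with the observation that the defining condition of pure largeness is insensitive to which extension algebra one regards $x$ as living in; once these are in place, the Elliott--Kucerovsky theorem carries all the weight.
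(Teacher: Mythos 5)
Your proof is correct and follows exactly the paper's argument: the paper likewise observes that pure largeness passes immediately from $\mathfrak e$ to $\mathfrak e \cdot \iota$ (since the defining condition involves only $x$, $\mathfrak B$, and $\mathfrak D$) and then invokes the Elliott--Kucerovsky theorem in both directions. You have simply spelled out the identification of the pull-back algebra with $\pi^{-1}(\iota(\mathfrak A))$, which the paper leaves implicit.
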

\begin{proof}
It follows immediately from the definition of pure largeness that $\mathfrak e \cdot \iota$ is also purely large, so the result follows from \cite[Theorem 6]{ElliottKucerovsky-extensions}.
\end{proof}

In the following, we consider
\[
\Gamma^{(0,\delta_1)}_{\mathfrak A, \mathfrak B} = q_{\delta_1}^{-1}(\{ \psi([1_{\mathfrak A}]) : \psi \in \Hom(K_0(\mathfrak A), \coker \delta_1)\}),
\]
which is a special case of Notation \ref{n:Gammadelta}. Clearly
\[
\Gamma^{(0,\delta_1)}_{\mathfrak A, \mathfrak B} \subseteq \Gamma^{\delta_\ast}_{\mathfrak A, \mathfrak B} \subseteq K_0(\mathfrak B).
\]

The following lemma is the main technical tool to obtain our classification of unital extensions.
While the conditions on $\mathfrak A$ in the following lemma might look slightly technical, we emphasise that any unital UCT Kirchberg algebra has these properties; $K_1$-surjectivity follows from \cite{Cuntz-K-theoryI} and the condition on automorphisms follows from the Kirchberg--Phillips theorem \cite{Kirchberg-simple}, \cite{Phillips-classification}.

\begin{lemma}\label{l:technical}
Let $\mathfrak e : 0 \to \mathfrak B \to \mathfrak E \to \mathfrak A \to 0$ be a unital extension of separable $C^\ast$-algebras with boundary map $\delta_\ast \colon K_\ast(\mathfrak A) \to K_{1-\ast}(\mathfrak B)$ in $K$-theory. Suppose that $\mathfrak B$ is stable, and that $\mathfrak A$ is nuclear, $K_1$-surjective, satisfies the UCT, and that for any $\mathsf y \in \kk(\mathfrak A,\mathfrak A)$ for which $K_\ast(\mathsf y) = K_\ast(\id_{\mathfrak A})$, there is an automorphism $\alpha \in \Aut(\mathfrak A)$ such that $\kk(\alpha) = \mathsf y$. Then for any $x\in \Gamma^{(0, \delta_1)}_{\mathfrak A, \mathfrak B}$ there is an automorphism $\beta \in \Aut(\mathfrak A)$ for which $K_\ast(\beta) = \id_{K_\ast(\mathfrak A)}$, and
\[
[\mathfrak e \cdot \beta]_\s = [\mathfrak e]_\s + [\mathfrak e_x]_\s \in \Ext_\us(\mathfrak A, \mathfrak B).
\]
\end{lemma}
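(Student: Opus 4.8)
The plan is to realise $\beta$ as an automorphism with a carefully prescribed $\kk$-class, and then to verify the asserted identity in $\Ext_\us(\mathfrak A,\mathfrak B)$ by controlling separately its image in $\Ext_\uw(\mathfrak A,\mathfrak B)\subseteq\kk^1(\mathfrak A,\mathfrak B)$ and in the kernel $K_0(\mathfrak B)/\Gamma_{\mathfrak A,\mathfrak B}$ of the surjection $\Ext_\us^{-1}(\mathfrak A,\mathfrak B)\to\Ext_\uw^{-1}(\mathfrak A,\mathfrak B)$ furnished by Theorem \ref{t:sesunitalext}. Since $\mathfrak A$ is nuclear, $\Ext_\us=\Ext_\us^{-1}$, so I may work with invertible classes throughout.

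First I would reduce the problem. Fix $\psi\in\Hom(K_0(\mathfrak A),\coker\delta_1)$ with $q_{\delta_1}(x)=\psi([1_\mathfrak{A}])$. If $\beta_0\in\Aut(\mathfrak A)$ satisfies $K_\ast(\beta_0)=\id$ and $u\in\mathcal U(\mathfrak A)$, then $K_\ast(\beta_0\circ\Ad u)=\id$ too, and pulling back along the inner automorphism $\Ad u$ amounts to conjugating a Busby map, so $\mathfrak e\cdot(\beta_0\circ\Ad u)=(\mathfrak e\cdot\beta_0)\cdot\Ad u=\Ad v\cdot(\mathfrak e\cdot\beta_0)$ for the unitary $v\in\corona{\mathfrak B}$ which is the image of $u$ under the Busby map of $\mathfrak e\cdot\beta_0$; here $[v]_1\in K_1(\corona{\mathfrak B})\cong K_0(\mathfrak B)$ equals $\delta_1([u]_1)$, because $\mathfrak e\cdot\beta_0$ has degree-one boundary map $\delta_1\circ K_1(\beta_0)=\delta_1$. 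By Lemma \ref{l:Advssum} this gives $[\mathfrak e\cdot(\beta_0\circ\Ad u)]_\s=[\mathfrak e\cdot\beta_0]_\s+[\mathfrak e_{\delta_1([u]_1)}]_\s$. As $\mathfrak A$ is $K_1$-surjective the elements $\delta_1([u]_1)$ exhaust $\im\delta_1$, and $x\mapsto[\mathfrak e_x]_\s$ is additive, so it is enough to produce $\beta_0\in\Aut(\mathfrak A)$ with $K_\ast(\beta_0)=\id$ and $[\mathfrak e\cdot\beta_0]_\s=[\mathfrak e]_\s+[\mathfrak e_{x_0}]_\s$ for \emph{some} $x_0$ with $q_{\delta_1}(x_0)=\psi([1_\mathfrak{A}])$; then $\beta:=\beta_0\circ\Ad u$ with $\delta_1([u]_1)=x-x_0$ finishes the job.

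To build $\beta_0$, I pass to $\kk$-theory. Write $\bar\delta_1\colon K_1(\mathfrak A)\twoheadrightarrow\im\delta_1$, and let $\partial\psi\in\Ext(K_0(\mathfrak A),\im\delta_1)$ be the obstruction, coming from the six-term sequence of $0\to\im\delta_1\to K_0(\mathfrak B)\to\coker\delta_1\to0$, to lifting $\psi$ to $\Hom(K_0(\mathfrak A),K_0(\mathfrak B))$; its image in $\Ext(K_0(\mathfrak A),K_0(\mathfrak B))$ vanishes by exactness. Since $\Ext^2$ vanishes over $\mathbb Z$, the map $(\bar\delta_1)_\ast\colon\Ext(K_0(\mathfrak A),K_1(\mathfrak A))\to\Ext(K_0(\mathfrak A),\im\delta_1)$ is onto, so I choose $\zeta_0$ with $(\bar\delta_1)_\ast\zeta_0=\partial\psi$. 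By the UCT for $\mathfrak A$ with coefficients $\mathfrak A$, the kernel of $\kk(\mathfrak A,\mathfrak A)\to\Hom(K_\ast(\mathfrak A),K_\ast(\mathfrak A))$ is isomorphic to $\Ext(K_\ast(\mathfrak A),K_{\ast+1}(\mathfrak A))$; let $\mathsf w\in\kk(\mathfrak A,\mathfrak A)$ correspond to $(\zeta_0,0)$ under this isomorphism and set $\mathsf y:=\kk(\id_\mathfrak{A})+\mathsf w$, so that $K_\ast(\mathsf y)=\id$. By hypothesis $\mathsf y=\kk(\beta_0)$ for some $\beta_0\in\Aut(\mathfrak A)$, and then $K_\ast(\beta_0)=\id$. (Proposition \ref{p:PV} gives a concrete mapping-torus / crossed-product model of $\mathsf w=\kk(\beta_0)-\kk(\id_\mathfrak{A})$, which I use in the last step.)

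Finally I verify the identity for this $\beta_0$. In $\kk^1(\mathfrak A,\mathfrak B)$, $[\mathfrak e\cdot\beta_0]-[\mathfrak e]$ is the Kasparov product of $\mathsf w$ with $[\mathfrak e]$; this is a ghost, and the associated group extension is the push-out of $(\zeta_0,0)$ along the boundary maps of $\mathfrak e$, whose degree-zero part is $(\delta_1)_\ast\zeta_0=(\im\delta_1\hookrightarrow K_0(\mathfrak B))_\ast\partial\psi=0$ and whose degree-one part is $0$. Thus $[\mathfrak e\cdot\beta_0]=[\mathfrak e]$ in $\Ext^{-1}(\mathfrak A,\mathfrak B)$, and since $[\mathfrak e_{x_0}]$ also vanishes in $\Ext^{-1}(\mathfrak A,\mathfrak B)$ (the six-term sequence of $\mathfrak e_{x_0}$ is split), the element $c:=[\mathfrak e\cdot\beta_0]_\s-[\mathfrak e]_\s-[\mathfrak e_{x_0}]_\s$ lies in $K_0(\mathfrak B)/\Gamma_{\mathfrak A,\mathfrak B}=\ker\bigl(\Ext_\us^{-1}(\mathfrak A,\mathfrak B)\to\Ext_\uw^{-1}(\mathfrak A,\mathfrak B)\bigr)$. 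To see $c=0$, choose a unital extension $\bar{\mathfrak e}$ with $[\bar{\mathfrak e}]_\s=-[\mathfrak e]_\s$ and put $\mathfrak g:=(\mathfrak e\cdot\beta_0)\oplus\bar{\mathfrak e}$; then $[\mathfrak g]_\s=[\mathfrak e\cdot\beta_0]_\s-[\mathfrak e]_\s$ has vanishing boundary maps, so $[\mathfrak g]_\s\in\ker\widetilde\gamma_{\mathfrak A,\mathfrak B}$, and since $\widetilde\kappa_{\mathfrak A,\mathfrak B}$ is an isomorphism restricting to the canonical inclusion on $K_0(\mathfrak B)/\Gamma_{\mathfrak A,\mathfrak B}$, it suffices to identify the pointed six-term sequence of $\mathfrak g$. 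Its unpointed part is trivial as $[\mathfrak g]=0$ in $\kk^1$; and using Lemma \ref{l:moveunit}, the mapping-torus model from Proposition \ref{p:PV}, and a chase through the push-out $E_\psi=\{(b,a)\in K_0(\mathfrak B)\oplus K_0(\mathfrak A):q_{\delta_1}(b)=\psi(a)\}$ built from $\zeta_0$ and $\delta_1$, the class of $[1_\mathfrak{A}]$ comes out as $x_0+\Gamma_{\mathfrak A,\mathfrak B}$ for an $x_0$ with $q_{\delta_1}(x_0)=\psi([1_\mathfrak{A}])$, which by Corollary \ref{c:Extex} equals $\widetilde\kappa_{\mathfrak A,\mathfrak B}([\mathfrak e_{x_0}]_\s)$, so $c=0$. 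I expect this last step — tracking the distinguished element $[1_\mathfrak{A}]$ through the pull-back by $\beta_0$ and the push-out $E_\psi$ so as to pin its displacement down modulo $\Gamma_{\mathfrak A,\mathfrak B}$ and $\im\delta_1$ to precisely $\psi([1_\mathfrak{A}])$, essentially a ``multiplicativity with distinguished elements'' statement for the UCT — to be the main obstacle; everything else is bookkeeping with the results established earlier in the section.
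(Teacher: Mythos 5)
Your reduction (correcting by an inner automorphism $\Ad u$ with $\delta_1([u]_1)=x-x_0$, via Lemma \ref{l:Advssum} and $K_1$-surjectivity) and your construction of $\beta_0$ (choosing a class in $\Ext(K_0(\mathfrak A),K_1(\mathfrak A))$ lifting the obstruction to factoring $\psi$ through $K_0(\mathfrak B)$, then realising it by an automorphism) both match the paper, which does the same thing concretely with a free resolution and a push-out group $G$. The first half of your verification is also fine: $\mathsf w\times[\mathfrak e]=0$ in $\kk^1$, so the discrepancy $c$ lives in $K_0(\mathfrak B)/\Gamma_{\mathfrak A,\mathfrak B}$. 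The gap is the final step, and it is not a bookkeeping gap — the method you propose for computing $c$ cannot work. You want to detect $c$ by computing the pointed six-term exact sequence of $\mathfrak g=(\mathfrak e\cdot\beta_0)\oplus\bar{\mathfrak e}$ and matching it against $\widetilde\kappa_{\mathfrak A,\mathfrak B}([\mathfrak e_{x_0}]_\s)$. But neither summand of $\mathfrak g$ lies in $\ker\widetilde\gamma_{\mathfrak A,\mathfrak B}$, so $\widetilde\kappa_{\mathfrak A,\mathfrak B}$ cannot be applied summand-wise, and the only Cuntz-sum formula available (Proposition \ref{p:sumext}) requires one summand to have vanishing boundary maps. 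Worse, the individual pointed sequences carry no usable information: the canonical isomorphism $\mathfrak E_{\beta_0}\cong\mathfrak E$ covering $\beta_0$ already gives $\Ksix(\mathfrak e\cdot\beta_0)\equiv\Ksix(\mathfrak e)$, and by Lemma \ref{l:congruence} this only pins the displacement down modulo $\Gamma^{\delta_\ast}_{\mathfrak A,\mathfrak B}$ — i.e.\ it tells you the unit moved by \emph{some} element of $\Gamma^{\delta_\ast}_{\mathfrak A,\mathfrak B}$, not by an element of the specific coset $q_{\delta_1}^{-1}(\psi([1_{\mathfrak A}]))$. Since $\Gamma^{\delta_\ast}_{\mathfrak A,\mathfrak B}$ ranges over \emph{all} homomorphisms $\ker\delta_0\to\coker\delta_1$ while you need the particular $\psi$, no amount of six-term-sequence chasing recovers the statement.

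What is actually needed — and what the paper supplies — is an operator-algebraic realisation of $\beta_0$ as a conjugation. The paper builds an absorbing unital extension $\mathfrak f$ of the crossed product $\mathfrak A\rtimes_\alpha\mathbb Z$ (whose $K$-theory is computed by Proposition \ref{p:PV} to contain the group $G$, together with the map $\phi\colon G\to K_0(\mathfrak B)$ realised as $K_1$ of the Busby map $\eta$ of $\mathfrak f$), arranges $[\mathfrak f\cdot\iota]_\w=[\mathfrak e]_\w$, and invokes the Elliott--Kucerovsky theorem to upgrade this to a strong unitary equivalence $\Ad u\circ\tau=\eta\circ\iota$. The identity $\Ad w\circ\iota=\iota\circ\alpha$ for the canonical unitary $w$ then yields $\tau\circ\alpha=\Ad(u^\ast\eta(w)u)\circ\tau$, so Lemma \ref{l:Advssum} gives the displacement exactly as $\phi([w])$; since $[w]\mapsto[1_{\mathfrak A}]$ under the Pimsner--Voiculescu boundary, $\phi([w])\equiv\psi([1_{\mathfrak A}])\pmod{\im\delta_1}$, which is the precise statement you could not extract $K$-theoretically. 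This crossed-product-plus-absorption step is the core of the lemma (it is exactly the point where $\Ext_\us$ fails to be a $\kk$-bifunctor, as flagged in the introduction), and your proposal has no substitute for it.
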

\begin{proof}
Let $\mathfrak e_0$ be an absorbing, trivial, unital extension $\mathfrak e_0$. Since
\[
[(\mathfrak e \oplus \mathfrak e_0) \cdot \beta]_\s = [\mathfrak e \cdot \beta]_\s + [\mathfrak e_0 \cdot \beta]_\s = [\mathfrak e \cdot \beta]_\s
\]
for any automorphism $\beta\in \Aut(\mathfrak A)$, it follows that we may replace $\mathfrak e$ with $\mathfrak e \oplus \mathfrak e_0$ without loss of generality, and thus assume that $\mathfrak e$ is absorbing.

As $x\in \Gamma^{(0,\delta_1)}_{\mathfrak A, \mathfrak B}$ we may find a homomorphism $\psi \colon K_0(\mathfrak A) \to K_0(\mathfrak B)/\im \delta_1$, such that $\psi([1_{\mathfrak A}]) = x + \im \delta_1$.
Let $0 \to F_1 \xrightarrow{f_1} F_0 \xrightarrow{f_0} K_0(\mathfrak A) \to 0$ be a free resolution.\footnote{I.e.~a short exact sequence with both $F_0$ and $F_1$ free abelian groups.} As $F_0$ and $F_1$ are free, we may construct the following commutative diagram with exact rows
\[
\xymatrix{
0 \ar[r] & F_1 \ar[d]_{\psi_1} \ar[r]^{f_1} & F_0 \ar[d]_{\psi_0} \ar[r]^{f_0} & K_0(\mathfrak A) \ar[d]_\psi \ar[r] & 0 \\
& K_1(\mathfrak A) \ar[r]^{\delta_1} & K_0(\mathfrak B) \ar[r] & K_0(\mathfrak B)/\im \delta_1 \ar[r] & 0.
}
\]
Letting $G$ denote the push-out of $\psi_1$ and $f_1$, we get the following commutative diagram
\begin{equation}\label{eq:pushout}
\xymatrix{
0 \ar[r] & F_1 \ar[d]_{\psi_1} \ar[r]^{f_1} & F_0 \ar[d]_{\tilde \psi_0} \ar[r]^{f_0} & K_0(\mathfrak A) \ar@{=}[d] \ar[r] & 0 \\
0 \ar[r] & K_1(\mathfrak A) \ar@{=}[d] \ar[r] & G \ar@{.>}[d]_\phi \ar[r] & K_0(\mathfrak A) \ar[d]_\psi \ar[r] & 0 \\
& K_1(\mathfrak A) \ar[r]^{\delta_1} & K_0(\mathfrak B) \ar[r] & K_0(\mathfrak B)/\im \delta_1 \ar[r] & 0.
}
\end{equation}
with exact rows. The homomorphism $\phi\colon G \to K_0(\mathfrak B)$ making the diagram commute, exists by the universal property of push-outs. Let $\mathsf x_\ast \in \Ext(K_\ast(\mathfrak A), K_{1-\ast}(\mathfrak A)) \subseteq \kk(\mathfrak A, \mathfrak A)$ be such that
\[
\mathsf x_0 = \big[0 \to K_1(\mathfrak A) \to G \to K_0(\mathfrak A) \to 0 \big] \in \Ext(K_0(\mathfrak A), K_1(\mathfrak A)),
\]
and $\mathsf x_1$ is the trivial extension. As $K_\ast(\mathsf x_\ast)$ is the zero map, it follows from our hypothesis on $\mathfrak A$ that there is an automorphism $\alpha \in \Aut(\mathfrak A)$ such that $\kk(\alpha) = \kk(\id_A) + \mathsf x_\ast$.

Applying Proposition \ref{p:PV}, the Pimsner--Voiculescu sequence for the $C^\ast$-dynamical system $(\mathfrak A, \alpha, \mathbb Z)$ collapses to a short exact sequence
\[
0 \to K_{1-\ast}(\mathfrak A) \xrightarrow{\iota_{1-\ast}} K_{1-\ast}(\mathfrak A \rtimes_\alpha \mathbb Z) \to K_\ast(\mathfrak A) \to 0,
\]
which exactly induces the element $\mathsf x_\ast \in \Ext(K_\ast(\mathfrak A), K_{1-\ast}(\mathfrak A))$. Here $\iota \colon \mathfrak A \to \mathfrak A\rtimes_\alpha \mathbb Z$ is the inclusion map. In particular, we may assume that $K_0(\mathfrak A \rtimes_\alpha \mathbb Z) = K_0(\mathfrak A) \oplus K_1(\mathfrak A)$, and $K_1(\mathfrak A \rtimes_\alpha \mathbb Z) = G$, and thus we have a homomorphism
\[
(\delta_0\oplus 0, \phi) \colon K_\ast(\mathfrak A \rtimes_\alpha \mathbb Z) \to K_{1-\ast}(\mathfrak B).
\]
As $\iota_\ast \colon K_\ast(\mathfrak A) \to K_\ast(\mathfrak A \rtimes_\alpha \mathbb Z)$ is injective, it induces a surjection
\[
\iota^\ast \colon \Ext(K_\ast(\mathfrak A \rtimes_\alpha \mathbb Z), K_{\ast}(\mathfrak B)) \twoheadrightarrow \Ext(K_\ast(\mathfrak A), K_{\ast}(\mathfrak B)).
\]
As $\mathfrak A$ satisfies the UCT, so does $\mathfrak A \rtimes_\alpha \mathbb Z$ by \cite{RosenbergSchochet-UCT}. 
Thus, by Theorem \ref{t:sesunitalext}, we get the following commutative diagram
\[
\xymatrix{
\Ext(K_\ast(\mathfrak A\rtimes_\alpha \mathbb Z), K_\ast(\mathfrak B)) \ar@{>->}[r] \ar@{->>}[d]^{\iota^\ast} & \Ext_{\uw}(\mathfrak A \rtimes_\alpha \mathbb Z, \mathfrak B) \ar[d]^{\iota^\ast} \ar@{->>}[r] & \Hom((K_\ast(\mathfrak A \rtimes_\alpha \mathbb Z),[1]), K_{1-\ast}(\mathfrak B)) \ar[d]^{\iota^\ast} \\
\Ext(K_\ast(\mathfrak A), K_\ast(\mathfrak B)) \ar@{>->}[r] & \Ext_{\uw}(\mathfrak A, \mathfrak B) \ar@{->>}[r] & \Hom((K_\ast(\mathfrak A),[1]), K_{1-\ast}(\mathfrak B))
}
\]
for which the rows are short exact sequences.
We may pick $[\mathfrak f']_{\mathrm w} \in \Ext_{\uw}(\mathfrak A \rtimes_\alpha \mathbb Z, \mathfrak B)$ which lifts the homomorphism $(\delta_0 \oplus 0,\phi)$. Recall that we identified $G = K_1(\mathfrak A \rtimes_\alpha \mathbb Z)$, so by \eqref{eq:pushout}, we have
\[
\iota^\ast (\delta_0 \oplus 0, \phi) = ((\delta_0 \oplus 0) \circ \iota_0, \phi \circ \iota_1) = (\delta_0, \delta_1) = \delta_\ast.
\]
Thus $\iota^\ast([\mathfrak f']_{\mathrm w})$ and $[\mathfrak e]_{\mathrm w}$ induce the same element in $\Hom$. Thus, by doing a diagram chase in the above diagram (using surjectivity of the left vertical map), there is an element $[\mathfrak f'']_{\mathrm w} \in \Ext_{\uw}(\mathfrak A \rtimes_\alpha \mathbb Z, \mathfrak B)$ vanishing in $\Hom$, such that $\iota^\ast([\mathfrak f']_{\mathrm w} + [\mathfrak f'']_{\mathrm w}) = [\mathfrak e]_{\mathrm w}$. Let $\mathfrak f$ be an absorbing unital extension of $\mathfrak A \rtimes_\alpha \mathbb Z$ by $\mathfrak B$ such that $[\mathfrak f]_{\mathrm w} = [\mathfrak f']_{\mathrm w} + [\mathfrak f'']_{\mathrm w}$. Then $[\mathfrak f \cdot \iota]_{\mathrm w} = [\mathfrak e]_{\mathrm w}$. 

Let $\tau \colon \mathfrak A \to \corona{\mathfrak B}$ be the Busby map of $\mathfrak e$, and $\eta \colon \mathfrak A \rtimes_\alpha \mathbb Z \to \corona{\mathfrak B}$ be the Busby map of $\mathfrak f$. In particular, $\eta \circ \iota$ is the Busby map of $\mathfrak f \cdot \iota$. Recall from the beginning of the proof that we assumed that $\mathfrak e$ was absorbing, and by Lemma \ref{l:nucabspullback}, $\mathfrak f \cdot \iota$ is also absorbing. Thus, as $[\mathfrak f \cdot \iota]_w = [\mathfrak e]_w$, there is a unitary $u\in \corona{\mathfrak B}$ such that
\[
\Ad u \circ \tau = \eta \circ \iota.
\]
Let $w\in \mathfrak A \rtimes_\alpha \mathbb Z$ denote the canonical unitary, so that $\Ad w \circ \iota = \iota \circ \alpha$. Then
\begin{eqnarray*}
\tau \circ \alpha &=& \Ad u^\ast \circ \eta \circ \iota \circ \alpha \\
&=& \Ad u^\ast \circ \eta \circ \Ad w \circ \iota \\
&=& \Ad u^\ast \circ \Ad \eta(w) \circ \eta \circ \iota \\
&=& \Ad u^\ast \circ \Ad \eta(w) \circ \Ad u \circ \tau \\
&=& \Ad (u^\ast \eta(w) u) \circ \tau.
\end{eqnarray*}
Hence it follows from Lemma \ref{l:Advssum} that 
\[
[\mathfrak e \cdot \alpha]_\s = [\mathfrak e]_\s + [\mathfrak e_{[u^\ast \eta(w) u]}]_\s = [\mathfrak e]_\s + [\mathfrak e_{[\eta(w)]}]_\s \in \Ext_\us(\mathfrak A, \mathfrak B).
\]
Recall that $[\mathfrak f]_\w = [\mathfrak f']_\w + [\mathfrak f'']_\w$ where $[\mathfrak f'']_\w$ vanishes in $\Hom$, and $[\mathfrak f']_\w$ induces the homomorphism $(\delta_0\oplus 0,\phi) \colon K_\ast(\mathfrak A \rtimes_{\alpha} \mathbb Z) \to K_{1-\ast}(\mathfrak B)$. Thus $[\mathfrak f]_\w$ also induces the homomorphism $(\delta_0 \oplus 0,\phi)$, so in particular 
\[
K_1(\eta) = \phi \colon K_1(\mathfrak A \rtimes_\alpha \mathbb Z) \to K_1(\corona{\mathfrak B}) = K_0(\mathfrak B).
\]
 It follows that
\begin{equation}\label{eq:alphaext}
[\mathfrak e \cdot \alpha]_\s = [\mathfrak e]_\s + [\mathfrak e_{\phi([w])}]_\s \in \Ext_\us(\mathfrak A, \mathfrak B).
\end{equation}
By commutativity of the lower right square in \eqref{eq:pushout}, the two compositions 
\[
K_1(\mathfrak A \rtimes_\alpha \mathbb Z) \xrightarrow \phi K_0(\mathfrak B) \to K_0(\mathfrak B)/\im \delta_1, \qquad K_1(\mathfrak A \rtimes_\alpha \mathbb Z) \to K_0(\mathfrak A) \xrightarrow \psi K_0(\mathfrak B)/\im \delta_1,
\]
are the same. It is well-known, that $[w]$ is mapped to $[1_{\mathfrak A}]$ via the map $K_1(\mathfrak A \rtimes_\alpha \mathbb Z) \to K_0(\mathfrak A)$.\footnote{The proof of this is identical to the proof showing that the map $K_1(C(\mathbb T)) \to K_0(\mathbb K)$ induced by the usual Toeplitz extension, sends the class of the canonical unitary in $C(\mathbb T)$ to $[e_{11}]_0$.} Thus
\[
\phi([w]) + \im \delta_1 = \psi([1_\mathfrak{A}]) = x + \im \delta_1,
\]
where $x\in \Gamma^{(0,\delta_1)}_{\mathfrak A, \mathfrak B}$ is our given element from the statement of the lemma. As $\mathfrak A$ is $K_1$-surjective we may find a unitary $v\in \mathfrak A$ such that 
\begin{equation}\label{eq:phidelta}
\phi([w]) + \delta_1([v]) = x.
\end{equation}
Let $\beta = \Ad v \circ \alpha$ be the induced automorphism on $\mathfrak A$. By construction $K_\ast(\alpha) = \id_{K_\ast(\mathfrak A)}$ and thus $K_\ast(\beta) = \id_{K_\ast(\mathfrak A)}$. By Lemma \ref{l:Advssum} it follows that
\begin{eqnarray*}
[\mathfrak e \cdot \beta]_\s &=& [\mathfrak e \cdot \alpha]_\s + [\mathfrak e_{\delta_1([v]_1)}]_\s \\
&\stackrel{\eqref{eq:alphaext}}{=}& [\mathfrak e]_\s + [\mathfrak e_{\phi([w]_1)}]_\s + [\mathfrak e_{\delta_1([v]_1)}]_\s \\
&=& [\mathfrak e]_\s + [\mathfrak e_{\phi([w]_1) + \delta_1([v]_1)}]_\s \\
&\stackrel{\eqref{eq:phidelta}}{=}& [\mathfrak e]_\s + [\mathfrak e_x]_\s
\end{eqnarray*}
as desired.
\end{proof}

\begin{proposition}\label{p:mainprop}
Let $\mathfrak A$ and $\mathfrak B$ be separable $C^\ast$-algebras, with $\mathfrak A$ unital, nuclear and satisfying the UCT, and $\mathfrak B$ stable. Suppose that $\mathfrak A$ is $K_1$-surjective and that for any $\mathsf y \in \kk(\mathfrak A,\mathfrak A)$ for which $K_\ast(\mathsf y) = K_\ast(\id_{\mathfrak A})$, there is an automorphism $\alpha \in \Aut(\mathfrak A)$ such that $\kk(\alpha) = \mathsf y$. Let $\mathfrak e_1$ and $\mathfrak e_2$ be unital extensions of $\mathfrak A$ by $\mathfrak B$ and suppose that
\begin{itemize}
\item[$(a)$] $[\mathfrak e_1]_\mathrm{w} = [\mathfrak e_2]_\mathrm{w}$ in $\Ext_{\uw}(\mathfrak A, \mathfrak B)$,
\item[$(b)$] $\Ksix(\mathfrak e_1) \equiv \Ksix(\mathfrak e_2)$,
\item[$(c)$] the exponential maps $\delta_0 \colon K_0(\mathfrak A) \to K_{1-\ast}(\mathfrak B)$ induced by $\mathfrak e_1$ and $\mathfrak e_2$ vanish.
\end{itemize} 
Then there is an automorphism $\beta \in \Aut(\mathfrak A)$ with $K_\ast(\beta) = \id_{K_\ast(\mathfrak A)}$ such that $[\mathfrak e_1 \cdot \beta] = [\mathfrak e_2]$ in $\Ext_{\us}(\mathfrak A, \mathfrak B)$.
\end{proposition}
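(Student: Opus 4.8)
The plan is to carry out the strategy described at the start of this section: use the unital UCT (Theorem \ref{t:sesunitalext}) to find a correcting element $x\in K_0(\mathfrak B)$ with $[\mathfrak e_1]_\s = [\mathfrak e_2\oplus\mathfrak e_x]_\s$, use hypotheses $(b)$ and $(c)$ to locate $x$ inside $\Gamma^{(0,\delta_1)}_{\mathfrak A,\mathfrak B}$, and finally absorb $\mathfrak e_x$ into an automorphism via Lemma \ref{l:technical}. Since $\mathfrak A$ is nuclear, the Choi--Effros Lifting Theorem makes every unital extension of $\mathfrak A$ by $\mathfrak B$ semisplit, so all the extensions below represent elements of $\Ext^{-1}_{\us}$ and $\Ext^{-1}_{\uw}$, and $\Ext_{\uw}(\mathfrak A,\mathfrak B) = \Ext^{-1}_{\uw}(\mathfrak A,\mathfrak B)$.

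First I would produce $x$. By Theorem \ref{t:sesunitalext} the middle column of \eqref{eq:UCTdiagram} is a short exact sequence
\[
0 \to K_0(\mathfrak B)/\Gamma_{\mathfrak A,\mathfrak B} \to \Ext^{-1}_{\us}(\mathfrak A,\mathfrak B) \to \Ext^{-1}_{\uw}(\mathfrak A,\mathfrak B) \to 0,
\]
whose left-hand map sends $x + \Gamma_{\mathfrak A,\mathfrak B}$ to $[\mathfrak e_x]_\s$ (this is the map from Lemma \ref{l:UCT}, which is a homomorphism by Lemma \ref{l:Advssum}). Hypothesis $(a)$ says that $[\mathfrak e_1]_\s$ and $[\mathfrak e_2]_\s$ have the same image in $\Ext^{-1}_{\uw}(\mathfrak A,\mathfrak B)$, so by exactness there is $x\in K_0(\mathfrak B)$ with
\[
[\mathfrak e_1]_\s = [\mathfrak e_2]_\s + [\mathfrak e_x]_\s = [\mathfrak e_2 \oplus \mathfrak e_x]_\s \in \Ext^{-1}_{\us}(\mathfrak A,\mathfrak B).
\]

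Next I would pin down where $x$ lives. By Corollary \ref{c:KsixExt} the identity $[\mathfrak e_1]_\s = [\mathfrak e_2\oplus\mathfrak e_x]_\s$ gives $\Ksix(\mathfrak e_2\oplus\mathfrak e_x) \equiv \Ksix(\mathfrak e_1)$, and combined with hypothesis $(b)$ we get $\Ksix(\mathfrak e_2\oplus\mathfrak e_x)\equiv \Ksix(\mathfrak e_2)$. Hence by Lemma \ref{l:congruence}, applied to $\mathfrak e_2$, we conclude $x \in \Gamma^{\delta_\ast}_{\mathfrak A,\mathfrak B}$, where $\delta_\ast$ is the boundary map of $\mathfrak e_2$. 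Since a congruence of six-term sequences is the identity on the $K$-groups of the ideal and quotient, $\mathfrak e_1$ and $\mathfrak e_2$ have the same boundary maps; in particular hypothesis $(c)$ forces $\delta_0 = 0$, so that $\ker\delta_0 = K_0(\mathfrak A)$ and therefore $\Gamma^{\delta_\ast}_{\mathfrak A,\mathfrak B} = \Gamma^{(0,\delta_1)}_{\mathfrak A,\mathfrak B}$. Thus $x\in \Gamma^{(0,\delta_1)}_{\mathfrak A,\mathfrak B}$, and as this set is the preimage under the homomorphism $q_{\delta_1}$ of a subgroup of $\coker\delta_1$, it is a subgroup, so also $-x\in\Gamma^{(0,\delta_1)}_{\mathfrak A,\mathfrak B}$.

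Finally I would invoke Lemma \ref{l:technical}. The standing hypotheses on $\mathfrak A$ in the Proposition are exactly those demanded there, $\mathfrak B$ is stable, and the boundary map of $\mathfrak e_1$ is the same $\delta_\ast$ as above. Applying the lemma to the extension $\mathfrak e_1$ with the element $-x\in\Gamma^{(0,\delta_1)}_{\mathfrak A,\mathfrak B}$ yields $\beta\in\Aut(\mathfrak A)$ with $K_\ast(\beta) = \id_{K_\ast(\mathfrak A)}$ and
\[
[\mathfrak e_1\cdot\beta]_\s = [\mathfrak e_1]_\s + [\mathfrak e_{-x}]_\s = [\mathfrak e_1]_\s - [\mathfrak e_x]_\s = [\mathfrak e_2]_\s,
\]
where the second equality uses that $x\mapsto[\mathfrak e_x]_\s$ is a homomorphism (Lemma \ref{l:Advssum}) and the last uses the first step. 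This is the desired conclusion. The only point requiring real care — and the place where all three hypotheses are used together — is the passage from $x\in\Gamma^{\delta_\ast}_{\mathfrak A,\mathfrak B}$ to $x\in\Gamma^{(0,\delta_1)}_{\mathfrak A,\mathfrak B}$, which is precisely what the vanishing of $\delta_0$ in $(c)$ provides, and which is exactly what makes Lemma \ref{l:technical} applicable. The genuinely hard work — Lemma \ref{l:technical} itself, resting on the naturality in Theorem \ref{t:sesunitalext} and the Pimsner--Voiculescu computation of Proposition \ref{p:PV} — is already in place, so the argument here is essentially bookkeeping with the exact sequences.
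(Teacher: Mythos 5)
Your proposal is correct and takes essentially the same route as the paper's proof: extract $x$ from the unital UCT exact sequence, use Corollary \ref{c:KsixExt} together with hypothesis $(b)$ and Lemma \ref{l:congruence} to place $x$ in $\Gamma^{\delta_\ast}_{\mathfrak A,\mathfrak B} = \Gamma^{(0,\delta_1)}_{\mathfrak A,\mathfrak B}$ (using $(c)$), and then apply Lemma \ref{l:technical}. The only immaterial difference is bookkeeping: the paper arranges $[\mathfrak e_1\oplus\mathfrak e_x]_\s = [\mathfrak e_2]_\s$ and feeds $x$ directly into Lemma \ref{l:technical}, whereas you put the correction on the other side and pass to $-x$, which is legitimate since $\Gamma^{(0,\delta_1)}_{\mathfrak A,\mathfrak B}$ is a subgroup.
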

\begin{proof}
Let $\delta_\ast \colon K_\ast(\mathfrak A) \to K_{1-\ast}(\mathfrak B)$ be the connecting maps in the six-term exact sequences of $\mathfrak e_1$ and $\mathfrak e_2$, which agree since $\Ksix(\mathfrak e_1) \equiv \Ksix(\mathfrak e_2)$. As $[\mathfrak e_1]_\mathrm{w} = [\mathfrak e_2]_\mathrm{w}$, it follows from Theorem \ref{t:sesunitalext} that there is an $x\in K_0(\mathfrak B)$ such that $[\mathfrak e_1\oplus\mathfrak e_x]_\s = [\mathfrak e_2]_\s$ in $\Ext_\us(\mathfrak A, \mathfrak B)$. As 
\[
\Ksix(\mathfrak e_1 \oplus \mathfrak e_x) \stackrel{\textrm{Cor.}~\ref{c:KsixExt}}{\equiv} \Ksix(\mathfrak e_2) \equiv \Ksix(\mathfrak e_1),
\]
 it follows from Lemma \ref{l:congruence} that $x\in \Gamma^{\delta_\ast}_{\mathfrak A, \mathfrak B}$. As $\delta_0 =0$, it clearly holds that $\Gamma^{\delta_\ast}_{\mathfrak A, \mathfrak B} = \Gamma^{(0,\delta_1)}_{\mathfrak A, \mathfrak B}$ and thus Lemma \ref{l:technical} provides an automorphism $\beta\in \Aut(\mathfrak A)$ such that
\[
[\mathfrak e_1 \cdot \beta]_\s = [\mathfrak e_1]_\s + [\mathfrak e_x]_\s = [\mathfrak e_2]_\s \in \Ext_\us(\mathfrak A, \mathfrak B)
\]
as wanted.
\end{proof}

\begin{remark}
The only thing Condition $(c)$ was used for in Proposition \ref{p:mainprop} was so that $\Gamma^{(0, \delta_1)}_{\mathfrak A, \mathfrak B} = \Gamma^{\delta_\ast}_{\mathfrak A, \mathfrak B}$. Hence one may replace Condition $(c)$ with this more general condition in order to obtain the conclusion of Proposition \ref{p:mainprop}. 

In particular, Condition $(c)$ in Proposition \ref{p:mainprop} may be replaced by any of the following statements, as these all imply that $\Gamma^{(0, \delta_1)}_{\mathfrak A, \mathfrak B} = \Gamma^{\delta_\ast}_{\mathfrak A, \mathfrak B}$. Proving that $(c1)$--$(c6)$ imply $\Gamma^{(0,\delta_1)}_{\mathfrak A, \mathfrak B} = \Gamma^{\delta_\ast}_{\mathfrak A, \mathfrak B}$ is left to the reader.
\begin{itemize}
\item[$(c1)$] The class of the unit $[1_{\mathfrak A}]$ vanishes in $K_0(\mathfrak A)$.
\item[$(c2)$] The exponential map $\delta_0$ is injective.
\item[$(c3)$] The index map $\delta_1$ is surjective.
\item[$(c4)$] $K_0(\mathfrak A) \cong \mathbb Z \oplus G$, such that $[1_{\mathfrak A}] = (1,g)$ for some $g\in G$.
\item[$(c5)$] $\ker \delta_0$ is a direct summand in $K_0(\mathfrak A)$.
\item[$(c6)$] $K_0(\mathfrak E)$ is divisible.
\end{itemize}
\end{remark}

\begin{proposition}\label{p:congtoExt}
Let $\mathfrak e_i \colon 0 \to \mathfrak B \to \mathfrak E_i \to \mathfrak A \to 0$ be unital extensions of $C^\ast$-algebras for $i=1,2$ such that $\mathfrak A$ is a unital UCT Kirchberg algebra, and $\mathfrak B$ is a stable AF algebra. If $\Ksix(\mathfrak e_1) \equiv \Ksix(\mathfrak e_2)$ then there is an automorphism $\alpha \in \Aut(\mathfrak A)$ such that $\mathfrak e_1$ and $\mathfrak e_2 \cdot \alpha$ are strongly unitarily equivalent.

In particular, if $\Ksix(\mathfrak e_1) \equiv \Ksix(\mathfrak e_2)$ then $\mathfrak E_1 \cong \mathfrak E_2$.
\end{proposition}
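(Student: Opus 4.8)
The plan is to manoeuvre $\mathfrak e_1$ and $\mathfrak e_2$ into a configuration where Proposition~\ref{p:mainprop} applies. First I would record that both extensions are absorbing: since $\mathfrak A$ is simple and the Busby maps $\tau_i$ are unital, for each nonzero $a \in \mathfrak A$ the closed ideal of $\corona{\mathfrak B}$ generated by $\tau_i(a)$ pulls back to an ideal of $\mathfrak A$ containing $a$, hence equals $\mathfrak A$, and so contains $\tau_i(1_{\mathfrak A}) = 1$; thus $\mathfrak e_1,\mathfrak e_2$ are full. A stable AF algebra has the corona factorisation property, so by Kucerovsky--Ng the extensions are purely large, and hence absorbing in the unital sense by the Elliott--Kucerovsky theorem (Remark~\ref{r:purelylarge}). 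Next, a unital UCT Kirchberg algebra is $K_1$-surjective by \cite{Cuntz-K-theoryI} and satisfies the automorphism-lifting hypothesis of Proposition~\ref{p:mainprop} via the Kirchberg--Phillips theorem. Of the conditions $(a)$--$(c)$ there, $(b)$ is our hypothesis and $(c)$ holds automatically because $K_1(\mathfrak B) = 0$, so the exponential maps vanish. The remaining task — and the main obstacle — is to arrange $(a)$, namely $[\mathfrak e_1]_\w = [\mathfrak e_2]_\w$ in $\Ext_\uw(\mathfrak A, \mathfrak B)$.

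The subtlety is that $(a)$ does \emph{not} follow from $\Ksix(\mathfrak e_1) \equiv \Ksix(\mathfrak e_2)$ alone. The congruence only forces the index maps $\delta_1 \colon K_1(\mathfrak A) \to K_0(\mathfrak B)$ to coincide, so $\mathsf Z := [\mathfrak e_1]_\w - [\mathfrak e_2]_\w$ lands in $\ker \gamma_{\mathfrak A, \mathfrak B} \cong \Ext(K_\ast(\mathfrak A), K_\ast(\mathfrak B)) = \Ext(K_0(\mathfrak A), K_0(\mathfrak B))$ (using $K_1(\mathfrak B)=0$) but need not be zero. However, $\mathsf Z$ is invisible to the six-term sequence: comparing the exact sequences $0 \to K_0(\mathfrak B)/\im \delta_1 \to K_0(\mathfrak E_i) \to K_0(\mathfrak A) \to 0$ forces $\mathsf Z$ to become trivial after push-forward along $K_0(\mathfrak B) \twoheadrightarrow K_0(\mathfrak B)/\im \delta_1$, and by right-exactness of $\Ext(K_0(\mathfrak A), -)$ applied to $K_1(\mathfrak A) \twoheadrightarrow \im \delta_1 \hookrightarrow K_0(\mathfrak B)$ this means $\mathsf Z = (\delta_1)_\ast(\mathsf w_0)$ for some $\mathsf w_0 \in \Ext(K_0(\mathfrak A), K_1(\mathfrak A))$, where $(\delta_1)_\ast$ is push-out along $\delta_1$.

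I would then absorb $\mathsf Z$ by twisting $\mathfrak e_2$ with an automorphism of $\mathfrak A$. Viewing $\mathsf w := (\mathsf w_0, 0)$ as an element of $\Ext(K_\ast(\mathfrak A), K_{1-\ast}(\mathfrak A)) \subseteq \kk(\mathfrak A, \mathfrak A)$, the class $\kk(\id_{\mathfrak A}) + \mathsf w$ has $K_\ast$ equal to $K_\ast(\id_{\mathfrak A})$, so the automorphism-lifting hypothesis produces $\alpha_0 \in \Aut(\mathfrak A)$ with $\kk(\alpha_0) = \kk(\id_{\mathfrak A}) + \mathsf w$ and hence $K_\ast(\alpha_0) = \id_{K_\ast(\mathfrak A)}$. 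Using the natural inclusion $\Ext_\uw^{-1}(\mathfrak A, \mathfrak B) \hookrightarrow \kk^1(\mathfrak A, \mathfrak B)$ of Lemma~\ref{l:UCT} — under which $\alpha_0^\ast$ becomes Kasparov multiplication by $\kk(\alpha_0)$ — together with the product structure on $\kk$-theory modulo the UCT (in which only the $\Ext$-part of $\mathsf w$ and the $\Hom$-part $\delta_1$ of $[\mathfrak e_2]_\w$ survive, since $\Ext\cdot\Ext = 0$ and $\delta_0 = 0$), one computes
\[
[\mathfrak e_2 \cdot \alpha_0]_\w - [\mathfrak e_2]_\w = \big(\kk(\alpha_0) - \kk(\id_{\mathfrak A})\big) \cdot [\mathfrak e_2]_\w = (\delta_1)_\ast(\mathsf w_0) = \mathsf Z = [\mathfrak e_1]_\w - [\mathfrak e_2]_\w,
\]
so $[\mathfrak e_2 \cdot \alpha_0]_\w = [\mathfrak e_1]_\w$. (Alternatively, and perhaps more in keeping with the paper's toolbox, $\alpha_0$ can be produced directly via the Pimsner--Voiculescu construction of Proposition~\ref{p:PV}, exactly as in the proof of Lemma~\ref{l:technical}, arranged so that twisting changes the weak class by precisely $\mathsf Z$.) Since $\mathfrak e_2 \cdot \alpha_0$ is again an absorbing unital extension of $\mathfrak A$ by $\mathfrak B$ (Lemma~\ref{l:nucabspullback}) with $\Ksix(\mathfrak e_2 \cdot \alpha_0) \equiv \Ksix(\mathfrak e_2) \equiv \Ksix(\mathfrak e_1)$ because $K_\ast(\alpha_0) = \id$, conditions $(a)$--$(c)$ now hold for the pair $\mathfrak e_1, \mathfrak e_2 \cdot \alpha_0$.

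Finally, Proposition~\ref{p:mainprop} supplies $\beta \in \Aut(\mathfrak A)$ with $K_\ast(\beta) = \id_{K_\ast(\mathfrak A)}$ and $[\mathfrak e_1 \cdot \beta]_\s = [\mathfrak e_2 \cdot \alpha_0]_\s$ in $\Ext_\us(\mathfrak A, \mathfrak B)$; as both of these extensions are absorbing they are strongly unitarily equivalent, and pulling back along $\beta^{-1}$ (which preserves $\sim_\s$) gives $\mathfrak e_1 \sim_\s \mathfrak e_2 \cdot (\alpha_0 \circ \beta^{-1})$, so $\alpha := \alpha_0 \circ \beta^{-1}$ works. For the last assertion, strong unitary equivalence of $\mathfrak e_1$ and $\mathfrak e_2 \cdot \alpha$ yields an isomorphism of extension algebras through the diagram \eqref{eq:suediag}, while the extension algebra of $\mathfrak e_2 \cdot \alpha$ is isomorphic to $\mathfrak E_2$ via $a \oplus m \mapsto \alpha(a) \oplus m$; hence $\mathfrak E_1 \cong \mathfrak E_2$. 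The hard part, as indicated, is the second and third paragraphs: that congruence of six-term sequences is strictly weaker than equality in $\Ext_\uw$, so the discrepancy must be killed by exploiting the abundance of automorphisms of the Kirchberg algebra $\mathfrak A$.
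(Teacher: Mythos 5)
Your argument is correct, and it lands on the same skeleton as the paper's proof: both first manufacture an automorphism of $\mathfrak A$ acting trivially on $K$-theory which matches the classes in $\Ext_\uw(\mathfrak A,\mathfrak B)$ (equivalently, by Lemma \ref{l:UCT}, in $\Ext^{-1}(\mathfrak A,\mathfrak B)\cong\kk^1(\mathfrak A,\mathfrak B)$), then hand off to Proposition \ref{p:mainprop} (condition $(c)$ being free since $K_1(\mathfrak B)=0$), and finish by using fullness, the corona factorisation property and Elliott--Kucerovsky to upgrade equality in $\Ext_\us$ to strong unitary equivalence. Where you genuinely diverge is in how that first automorphism is produced. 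The paper cites \cite[Theorem~2.3]{EilersRestorffRuiz-classext} (built on \cite[Theorem~3.2]{Rordam-classsixterm}), which converts $\Ksix(\mathfrak e_1)\equiv\Ksix(\mathfrak e_2)$ into $[\mathfrak e_1]\times\mathsf y=\mathsf x\times[\mathfrak e_2]$ with $K_\ast(\mathsf x)=K_\ast(\id_{\mathfrak A})$ and $K_\ast(\mathsf y)=K_\ast(\id_{\mathfrak B})$, notes $\mathsf y=\kk(\id_{\mathfrak B})$ because $\mathfrak B$ is AF, and realises $\mathsf x$ by Kirchberg--Phillips. You instead reprove the relevant special case by hand: you locate the discrepancy $\mathsf Z$ in $\ker\gamma_{\mathfrak A,\mathfrak B}\cong\Ext(K_0(\mathfrak A),K_0(\mathfrak B))$, show it dies in $\Ext(K_0(\mathfrak A),\coker\delta_1)$, lift it through $(\delta_1)_\ast$, and kill it with an automorphism. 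This is more self-contained and makes transparent exactly which piece of the UCT filtration the six-term sequence cannot see, but it leans on two facts you state without proof: (i) that the difference of the induced classes of $\mathfrak e_1$ and $\mathfrak e_2$ in $\Ext(K_0(\mathfrak A),\coker\delta_1)$ is precisely the push-forward of $\mathsf Z$ --- to justify this, realise $\mathsf Z$ by an extension $\mathfrak f$ with vanishing boundary maps via the UCT, so that $[\mathfrak e_1]=[\mathfrak e_2\oplus\mathfrak f]$, and invoke Proposition \ref{p:sumext} to identify the induced class of the Cuntz sum with the Baer sum; and (ii) the product formula $\kappa(\mathsf w\times\mathsf z)=\gamma(\mathsf z)_\ast\kappa(\mathsf w)$ for $\mathsf w\in\ker\gamma$, which is essentially the content of the results the paper cites. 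Neither is a gap in substance, but each deserves a sentence. Your endgame (absorption of $\mathfrak e_1\cdot\beta$ and $\mathfrak e_2\cdot\alpha_0$, transporting $\sim_\s$ along $\beta^{-1}$, and the ``in particular'' clause) agrees with the paper's.
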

\begin{proof}
We identify $\Ext(\mathfrak A, \mathfrak B) \cong \kk^{1} ( \mathfrak{A} , \mathfrak{B} )$ in the usual way, see Theorem \ref{t:KKExt}.  By \cite[Theorem 2.3]{EilersRestorffRuiz-classext} (which is based on \cite[Theorem 3.2]{Rordam-classsixterm}), there exist $\mathsf x \in \kk ( \mathfrak{A} , \mathfrak{A} )$ and $\mathsf y \in \kk(\mathfrak{B} , \mathfrak{B} )$ such that $K_* ( \mathsf x ) = K_* ( \mathrm{id}_\mathfrak{A} )$, $K_* ( \mathsf y ) = K_* ( \mathrm{id}_\mathfrak{B} )$, $[ \mathfrak{e}_1 ] \times \mathsf y = \mathsf x \times [ \mathfrak{e}_2 ]$ in $\kk^1 ( \mathfrak{A} , \mathfrak{B} )$.  Since $\mathfrak{B}$ is an AF algebra, we have that $\mathsf y = \kk ( \mathrm{id}_\mathfrak{B} )$.  Thus $[ \mathfrak{e}_1 ] = \mathsf x \times [ \mathfrak{e}_2 ]$.  Since $\mathfrak{A}$ is a UCT Kirchberg algebra, by the Kirchberg--Phillips theorem \cite{Kirchberg-simple}, \cite{Phillips-classification} there exists an isomorphism $\alpha^{(1)} \colon \mathfrak{A} \to \mathfrak{A}$ such that $\mathsf x = \kk ( \alpha^{(1)} )$.   By \cite[Proposition 1.1]{Rordam-classsixterm}, we get 
\[
 [\mathfrak{e}_2 \cdot \alpha^{(1)}] = \mathsf x \times [ \mathfrak{e}_2 ] = [\mathfrak e_1] \in \kk^1(\mathfrak A, \mathfrak B) \cong \Ext(\mathfrak A, \mathfrak B).
\]
By Lemma \ref{l:UCT} it follows that $[\mathfrak e_1]_\w = [\mathfrak e_2 \cdot \alpha^{(1)}]_\w$ in $\Ext_\uw(\mathfrak A, \mathfrak B)$. Now, as $K_\ast(\alpha^{(1)}) = \id_{K_\ast(\mathfrak A)}$, it follows that 
\[
\Ksix(\mathfrak e_2 \cdot \alpha^{(1)}) \equiv \Ksix(\mathfrak e_2)  \equiv \Ksix(\mathfrak e_1).
\]
By \cite{Cuntz-K-theoryI} $\mathfrak A$ is $K_1$-surjective, and by the Kirchberg--Phillips theorem (cited above) $\mathfrak A$ satisfies the condition in Proposition \ref{p:mainprop} about automorphisms. Hence this proposition produces an automorphism $\alpha^{(2)}\in \Aut(\mathfrak A)$ with $K_\ast(\alpha^{(2)}) = \id_{K_\ast(\mathfrak A)}$ such that
\[
[\mathfrak e_1]_\s = [\mathfrak e_2 \cdot \alpha]_\s \in \Ext_\us(\mathfrak A, \mathfrak B)
\]
where $\alpha = \alpha^{(1)} \circ \alpha^{(2)}$.

As $\mathfrak A$ is simple, unital and nuclear, $\mathfrak B$ is stable with the corona factorisation property, and the extensions $\mathfrak e_1$ and $\mathfrak e_2 \cdot \alpha$ are unital, it follows that $\mathfrak e_1$ and $\mathfrak e_2 \cdot \alpha$ are full and thus absorbing. Hence $\mathfrak e_1$ and $\mathfrak e_2 \cdot \alpha$ are strongly unitarily equivalent.

The ``in particular'' part follows since the extension algebra of $\mathfrak e_2$ is isomorphic to the extension algebra of $\mathfrak e_2 \cdot \alpha$ and since strong unitary equivalence implies isomorphism of the extension algebras.
\end{proof}

By $\Ksixpos(\mathfrak e)$ we mean the six-term exact sequence in $K$-theory with order in all $K_0$-groups. The following is the main classification result of this section and is Theorem \ref{t:classunital}.

\begin{theorem}
Let $\mathfrak e_i : 0 \to \mathfrak B_i \to \mathfrak E_i \to \mathfrak A_i \to 0$ be unital extensions of $C^\ast$-algebras for $i=1,2$, such that $\mathfrak A_1$ and $\mathfrak A_2$ are unital UCT Kirchberg algebras and $\mathfrak B_1$ and $\mathfrak B_2$ are stable AF algebras. Then $\mathfrak E_1 \cong \mathfrak E_2$ if and only if $\Ksixpos(\mathfrak e_1) \cong \Ksixpos(\mathfrak e_2)$.
\end{theorem}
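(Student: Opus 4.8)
The plan is to prove the two implications separately; the forward direction carries essentially all of the content and will be obtained by reducing to Proposition~\ref{p:congtoExt}, while the reverse direction amounts to recognising the ideal $\mathfrak B_i$ intrinsically inside $\mathfrak E_i$.

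For the reverse implication, suppose $\theta\colon \mathfrak E_1 \xrightarrow{\ \cong\ } \mathfrak E_2$ is a $\ast$-isomorphism. I would first note that $\mathfrak B_i$ is the \emph{unique} closed two-sided ideal $\mathfrak J \triangleleft \mathfrak E_i$ for which $\mathfrak E_i/\mathfrak J$ is a purely infinite simple $C^\ast$-algebra: the ideal $\mathfrak B_i$ has this property since $\mathfrak A_i$ is a Kirchberg algebra, and conversely if $\mathfrak J$ is such an ideal then $\mathfrak J$ is maximal, so either $\mathfrak J = \mathfrak B_i$ or $\mathfrak J + \mathfrak B_i = \mathfrak E_i$; in the latter case $\mathfrak E_i/\mathfrak J \cong \mathfrak B_i/(\mathfrak B_i\cap \mathfrak J)$ is a nonzero quotient of an AF algebra and hence stably finite, contradicting pure infiniteness. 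Therefore $\theta(\mathfrak B_1) = \mathfrak B_2$, so $\theta$ restricts to an isomorphism $\mathfrak B_1 \cong \mathfrak B_2$ and descends to an isomorphism $\mathfrak A_1 \cong \mathfrak A_2$, giving a commutative morphism between the two short exact sequences. Applying $K$-theory produces an isomorphism of the two six-term exact sequences; it sends $[1_{\mathfrak E_1}]\mapsto[1_{\mathfrak E_2}]$ and $[1_{\mathfrak A_1}]\mapsto[1_{\mathfrak A_2}]$ and is order-preserving on every $K_0$-group, so $\Ksixpos(\mathfrak e_1)\cong\Ksixpos(\mathfrak e_2)$.

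For the forward implication, assume $\Ksixpos(\mathfrak e_1)\cong\Ksixpos(\mathfrak e_2)$. The given isomorphism restricts to an order isomorphism $K_0(\mathfrak B_1)\cong K_0(\mathfrak B_2)$ (with $K_1(\mathfrak B_i)=0$ as $\mathfrak B_i$ is AF), which by Elliott's classification of AF algebras is induced by a $\ast$-isomorphism $\mathfrak B_1\to\mathfrak B_2$; it also restricts to an isomorphism of the ordered $K_0$-groups with order unit $(K_0(\mathfrak A_1),[1_{\mathfrak A_1}])\cong(K_0(\mathfrak A_2),[1_{\mathfrak A_2}])$ together with $K_1(\mathfrak A_1)\cong K_1(\mathfrak A_2)$, which by the Kirchberg--Phillips classification theorem is induced by a $\ast$-isomorphism $\mathfrak A_1\to\mathfrak A_2$. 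Pushing $\mathfrak e_2$ forward along a suitable isomorphism of the ideals and pulling it back along a suitable isomorphism of the quotients — neither of which alters the extension algebra up to isomorphism, by the five lemma applied to the diagrams of Definition~\ref{d:pbpo} — I may replace $\mathfrak e_2$ by an extension with the same middle algebra and reduce to the case $\mathfrak B_1=\mathfrak B_2=:\mathfrak B$, $\mathfrak A_1=\mathfrak A_2=:\mathfrak A$, with the given $K$-theory isomorphism now equal to the identity on $K_\ast(\mathfrak B)$ and on $K_\ast(\mathfrak A)$. In this situation the isomorphism $\Ksixpos(\mathfrak e_1)\cong\Ksixpos(\mathfrak e_2)$ is precisely a congruence $\Ksix(\mathfrak e_1)\equiv\Ksix(\mathfrak e_2)$ in the sense of Section~3 — the distinguished classes $[1_{\mathfrak E_i}]$ being matched because $\Ksixpos$ records them — and Proposition~\ref{p:congtoExt} then yields $\mathfrak E_1\cong\mathfrak E_2$.

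The hard part has been absorbed into Proposition~\ref{p:congtoExt} (and through it into the naturality of the universal coefficient theorem, Theorem~\ref{t:sesunitalext}, and the technical Lemma~\ref{l:technical}); granting these, the only genuine care needed is in the bookkeeping of the forward direction — arranging the identifications $\mathfrak B_1=\mathfrak B_2$ and $\mathfrak A_1=\mathfrak A_2$ \emph{simultaneously} with the normalisation of the $K$-theory isomorphism to the identity, and verifying that the resulting data really is a congruence of pointed six-term sequences so that Proposition~\ref{p:congtoExt} applies — together with the elementary intrinsic description of $\mathfrak B_i$ used in the converse. I expect no serious obstacle beyond this.
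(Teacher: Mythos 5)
Your proof is correct and follows essentially the same route as the paper's: Elliott's theorem plus Kirchberg--Phillips to realise the $K$-theoretic isomorphisms of the ideals and quotients by $\ast$-isomorphisms, a push-out/pull-back to reduce the given isomorphism of $\Ksixpos$ to a congruence of pointed six-term sequences over a common ideal and quotient, and then Proposition \ref{p:congtoExt}. The only (harmless) deviation is in the easy direction, where you characterise $\mathfrak B_i$ intrinsically as the unique ideal with purely infinite simple quotient via stable finiteness of AF quotients, whereas the paper identifies it as the unique maximal ideal using fullness of the extension and stability of $\mathfrak B_i$.
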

\begin{proof}
Suppose $\mathfrak E_1 \cong \mathfrak E_2$. As the extension $\mathfrak e_i$ is unital, and as $\mathfrak A_i$ is simple, it follows that the extension $\mathfrak e_i$ is full. As $\mathfrak B_i$ is stable, it therefore follows that $\mathfrak B_i$ is the unique maximal ideal in $\mathfrak E_i$ for $i=1,2$.\footnote{\label{fn}Clearly $\mathfrak B_1$ is a maximal ideal as the corresponding quotient is simple. If $\mathfrak J \subseteq \mathfrak E_1$ is a two-sided, closed ideal such that $\mathfrak J \not \subseteq \mathfrak B_1$, then there is an element $x\in \mathfrak J \setminus \mathfrak B_1$ inducing a non-zero element in $\mathfrak A_1$. As the extension is full and $\mathfrak B_1$ is stable, it follows that $x$ induces a full element in $\multialg{\mathfrak B_1}$. Hence $\overline{ \mathfrak B_1 x \mathfrak B_1} = \mathfrak B_1$ so $\mathfrak B_1 \subsetneq \mathfrak J$ and thus $\mathfrak J= \mathfrak E_1$ by maximality of $\mathfrak B_1$. The same argument works for $\mathfrak E_2$.} It follows that the extensions $\mathfrak e_1$ and $\mathfrak e_2$ are isomorphic, and thus $\Ksixpos(\mathfrak e_1) \cong \Ksixpos(\mathfrak e_2)$.

Now suppose that there is an isomorphism $\Ksixpos(\mathfrak e_1) \xrightarrow \cong \Ksixpos(\mathfrak e_2)$ induced by
\[
\phi_\ast \colon K_\ast(\mathfrak A_1) \xrightarrow \cong K_\ast(\mathfrak A_2), \quad \psi_\ast\colon K_\ast^{+}(\mathfrak B_1) \xrightarrow \cong K_\ast^+(\mathfrak B_2) , \quad \rho_\ast \colon K_\ast(\mathfrak E_1) \xrightarrow \cong K_\ast(\mathfrak E_2).
\]
By the Kirchberg--Phillips theorem \cite{Kirchberg-simple}, \cite{Phillips-classification} we find an isomorphism $\alpha \colon \mathfrak A_1 \xrightarrow \cong \mathfrak A_2$ such that $K_\ast(\alpha) = \phi_\ast$. Similarly, by Elliott's classification of AF algebras \cite{Elliott-AFclass}, we find an isomorphism $\beta \colon \mathfrak B_1 \xrightarrow \cong \mathfrak B_2$ such that $K_\ast(\beta) = \psi_\ast$. We obtain the following commutative diagram
\begin{equation}\label{eq:bigpbpo}
\xymatrix{
\mathfrak e_1 : & 0 \ar[r] & \mathfrak B_1 \ar[d]^\beta_\cong \ar[r] & \mathfrak E_1 \ar[d]_\cong^{\eta^{(1)}} \ar[r] & \mathfrak A_1 \ar@{=}[d] \ar[r] & 0 \\
\beta \cdot \mathfrak e_1 : & 0 \ar[r] & \mathfrak B_2 \ar@{=}[d] \ar[r] & \mathfrak E_1' \ar[r] & \mathfrak A_1 \ar@{=}[d] \ar[r] & 0 \\
\mathfrak e_2 \cdot \alpha : & 0 \ar[r] & \mathfrak B_2 \ar@{=}[d] \ar[r] & \mathfrak E_2' \ar[r] \ar[d]_\cong^{\eta^{(2)}} & \mathfrak A_1 \ar[d]^\alpha_\cong \ar[r] & 0 \\
\mathfrak e_2 : & 0 \ar[r] & \mathfrak B_2 \ar[r] & \mathfrak E_2 \ar[r] & \mathfrak A_2 \ar[r] & 0
}
\end{equation}
which has exact rows. It is easy to see that the map
\[
K_\ast(\eta^{(2)})^{-1} \circ \rho_\ast \circ K_\ast(\eta^{(1)})^{-1} \colon K_\ast(\mathfrak E_1') \to K_\ast(\mathfrak E_2')
\]
induces a congruence $\Ksix(\beta \cdot \mathfrak e_1) \equiv \Ksix(\mathfrak e_2 \cdot \alpha)$. By Proposition \ref{p:congtoExt} it follows that $\mathfrak E_1' \cong \mathfrak E_2'$, so it follows that $\mathfrak E_1 \cong \mathfrak E_2$.
\end{proof}

%%%%%%%%%%%%%%%%%%%%%%%%%%%%%%%%%%%%%%%%%%%%%%%%%%%%%%%%%
%%%%%%%%%%%%%%%%%%%%%%%%%%%%%%%%%%%%%%%%%%%%%%%%%%%%%%%%%

\section{Determining when extensions are full}

In this section we characterise when certain extensions are full with a stable ideal. We show that when the ideal is sufficiently finite (e.g.~an AF algebra) and the quotient is sufficiently infinite (e.g.~a Kirchberg algebra), then this is characterised by the existence of a properly infinite, full projection in the extension algebra.

\begin{lemma}\label{l:stableAF}
Let $\mathfrak B$ be a $\sigma$-unital $C^\ast$-algebra with stable rank one. Then $\mathfrak B$ is stable if and only if there exists a projection $p \in \multialg{\mathfrak B}$ which is properly infinite, and which is strictly full, i.e.~$\overline{\mathfrak B p \mathfrak B} = \mathfrak B$.

In particular, if $p\in \multialg{\mathfrak B}$ is a strictly full, properly infinite projection, then $p\mathfrak B p$ is stable.
\end{lemma}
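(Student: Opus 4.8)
The plan is to prove the stated equivalence; the final ``in particular'' is then the instance of the equivalence obtained by replacing $\mathfrak B$ with the hereditary subalgebra $\mathfrak D:=p\mathfrak B p$, using that $\mathfrak D$ is $\sigma$-unital with stable rank one and that $p=1_{\multialg{\mathfrak D}}$ is a (automatically strictly full) properly infinite projection.

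The forward implication is immediate: if $\mathfrak B$ is stable then $\multialg{\mathfrak B}$ contains a pair of isometries with orthogonal ranges summing to the identity, so $p:=1_{\multialg{\mathfrak B}}$ is properly infinite, and it is trivially strictly full.

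For the converse, suppose $p\in\multialg{\mathfrak B}$ is properly infinite and strictly full. I would first record the structural facts about $\mathfrak D:=p\mathfrak B p$: it is a $\sigma$-unital hereditary subalgebra of $\mathfrak B$; strict fullness of $p$ says precisely that $\mathfrak D$ is a \emph{full} hereditary subalgebra, i.e.\ $\overline{\mathfrak B\mathfrak D\mathfrak B}=\mathfrak B$; it has stable rank one, since hereditary subalgebras of $C^\ast$-algebras of stable rank one again have stable rank one; and the canonical homomorphism $p\multialg{\mathfrak B}p\to\multialg{\mathfrak D}$ is an isomorphism (injective because strict fullness makes $\mathfrak D$ essential in $p\multialg{\mathfrak B}p$, surjective by the usual extension-of-multipliers argument), so that $1_{\multialg{\mathfrak D}}=p$ is properly infinite. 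With this in place I would show $\mathfrak B$ is stable by verifying the Hjelmborg--R\o rdam stability criterion: for each $a\in\mathfrak B_+$ and $\varepsilon>0$ one must exhibit $b\in\mathfrak B_+$ and $x\in\mathfrak B$ with $\|a-x^\ast x\|<\varepsilon$, with $xx^\ast$ dominated by $b$, and with $\|ab\|<\varepsilon$ — that is, a copy of $a$ pushed far away from $a$. The ingredients are: a sequential approximate unit of $\mathfrak B$, used to localise $(a-\varepsilon)_+$ at a finite stage; proper infiniteness of $p$ together with strict fullness, used to Cuntz-dominate $(a-\varepsilon)_+$ by a finite amplification of $p$, hence by $p$ itself, and thereby to realise (after a small cut-down) a Murray--von Neumann copy $a'$ of $(a-\varepsilon)_+$ \emph{inside} $\mathfrak D=p\mathfrak B p$; and proper infiniteness of $p=1_{\multialg{\mathfrak D}}$, used to move $a'$ along an isometry in $p\multialg{\mathfrak B}p$ whose range lies ``beyond'' the chosen stage of the approximate unit, producing the desired $b$. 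Granting this, $\mathfrak B$ is stable and the ``in particular'' follows as in the first paragraph. (Alternatively one may first obtain stability of $\mathfrak D$ and feed it into Brown's stable isomorphism theorem, $\mathfrak B\otimes\mathbb K\cong\mathfrak D\otimes\mathbb K\cong\mathfrak D$, to bootstrap back to $\mathfrak B$.)

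The delicate point, and where I expect the real work to lie, is this verification of the Hjelmborg--R\o rdam criterion. Proper infiniteness of $p$ directly yields only finitely many orthogonal copies of $p$ at a time; assembling these into the infinite, strictly convergent configuration needed to manufacture a genuinely ``spread out'' copy of $a$, and controlling the residual projection such an assembly leaves behind, is exactly the place where stable rank one — i.e.\ the cancellation it provides — is essential and must be used carefully. That some hypothesis of this kind is indispensable is shown by $\mathcal O_\infty$, which is $\sigma$-unital and has properly infinite multiplier unit but, being unital, is not stable; the criterion fails there for $a=1_{\mathcal O_\infty}$ precisely because no $b$ dominating a copy of $1$ can have $\|1\cdot b\|$ small.
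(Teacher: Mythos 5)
Your forward direction and your reduction of the ``in particular'' statement to the main equivalence (via $\multialg{p\mathfrak Bp}\cong p\multialg{\mathfrak B}p$ and permanence of stable rank one under passing to hereditary subalgebras) match the paper and are fine. The problem is the converse, which is the entire content of the lemma: you never prove it. You list the ingredients for verifying the Hjelmborg--R{\o}rdam criterion and then write ``Granting this, $\mathfrak B$ is stable,'' explicitly deferring the step you yourself identify as ``where the real work lies.'' That is a genuine gap, not a proof. You also misplace the difficulty: assembling infinitely many pairwise orthogonal copies of $p$ below $p$ requires neither stable rank one nor any strict-convergence bookkeeping --- if $s_1,s_2\in p\multialg{\mathfrak B}p$ are isometries (relative to $p$) with orthogonal ranges, then the projections $s_2^ns_1(s_2^ns_1)^\ast$, $n\ge 0$, are pairwise orthogonal, each equivalent to $p$ and dominated by $p$. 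This is exactly how the paper proceeds: the hereditary subalgebra $\mathfrak B_0$ these projections generate is a \emph{full, stable} hereditary subalgebra of $\mathfrak B$ isomorphic to $p\mathfrak Bp\otimes\mathbb K$, and the genuinely nontrivial step --- upgrading ``contains a full stable hereditary subalgebra'' to ``is stable,'' which is false without extra hypotheses (witness $\mathcal O_\infty$, which contains such a subalgebra via the above construction with $p=1$) --- is exactly where $\sigma$-unitality and stable rank one enter, through the Cuntz-semigroup stability criterion of Perera--Toms--White--Winter \cite[Lemma 4.6]{PereraTomsWhiteWinter-Cu}. If you insist on a hands-on verification of the Hjelmborg--R{\o}rdam criterion instead of citing such a result, you must actually supply the cancellation argument; as written there is nothing to check.

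Your parenthetical fallback is also flawed on two counts. First, ``first obtain stability of $\mathfrak D=p\mathfrak Bp$'' is circular: that $\mathfrak D$ is stable is precisely the ``in particular'' clause, which the lemma derives \emph{from} the main equivalence; the unit of $\multialg{\mathfrak D}$ being properly infinite does not by itself give stability of $\mathfrak D$ (again $\mathcal O_\infty$). Second, even granting stability of $\mathfrak D$, Brown's theorem only yields $\mathfrak B\otimes\mathbb K\cong\mathfrak D$, i.e.\ that $\mathfrak B$ is \emph{stably isomorphic} to a stable algebra, which does not imply $\mathfrak B\cong\mathfrak B\otimes\mathbb K$ ($\mathcal O_\infty$ is stably isomorphic to $\mathcal O_\infty\otimes\mathbb K$ without being stable). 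So neither route you sketch closes the argument.
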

\begin{proof}
If $\mathfrak B$ is stable then $1_{\multialg{\mathfrak B}} \in \multialg{\mathfrak B}$ is a strictly full, properly infinite projection.

Conversely, suppose $p\in \multialg{\mathfrak B}$ is a strictly full, properly infinite projection.
Let $p_1,p_2, \dots \in \multialg{\mathfrak B}$ be a sequence of pairwise orthogonal projections in $\multialg{\mathfrak B}$, such that $p_i \leq p$ and $p\sim p_i$ for all $i\in \mathbb N$. Then the hereditary $C^\ast$-subalgebra $\mathfrak B_0$ of $\mathfrak B$ generated by $p_1,p_2,\dots$ is isomorphic to $p\mathfrak B p\otimes \mathbb K$. As $p$ is strictly full it follows that $\mathfrak B_0 \subseteq \mathfrak B$  is a stable, full, hereditary $C^\ast$-subalgebra. It is an easy consequence of \cite[Lemma 4.6]{PereraTomsWhiteWinter-Cu} that $\mathfrak B$ is stable (as any strictly positive element in $\mathfrak B_0$ induces a full, properly infinite element in the scale of the Cuntz semigroup of $\mathfrak B$).

``In particular'' is immediate since $\multialg{p\mathfrak B p} \cong p\multialg{\mathfrak B} p$ canonically, and since $p\mathfrak B p$ is $\sigma$-unital with stable rank one.
\end{proof}

The following is essentially \cite[Proposition 2.7]{BlanchardRohdeRordam-K1inj}.

\begin{lemma}\label{l:propinfpullback}
Let $\mathfrak A, \mathfrak C$ and $\mathfrak D$ be $C^\ast$-algebras and suppose that $\phi \colon \mathfrak A \to \mathfrak D$ and $\pi \colon \mathfrak C \to \mathfrak D$ are $\ast$-homomorphisms for which $\pi$ is surjective. Suppose that $p \in \mathfrak A$ and $q\in \mathfrak C$ are projections such that $\phi(p) = \pi(q)$ and $\phi(p) \mathfrak D \phi(p)$ is $K_1$-injective. If both $p$ and $q$ are properly infinite, then $p\oplus q$ is properly infinite in the pull-back $\mathfrak A \oplus_{\phi,\pi} \mathfrak C$. 
\end{lemma}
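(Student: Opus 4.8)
The plan is to reduce to a unital pullback by passing to corners, and then to produce two isometries with orthogonal ranges in the pullback by lifting a carefully chosen unitary of $\phi(p)\mathfrak D\phi(p)$ through $\pi$.

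\emph{Step 1 (reduction).} A projection is properly infinite precisely when the unit of the hereditary subalgebra it generates is, and $(p\oplus q)(\mathfrak A \oplus_{\phi,\pi}\mathfrak C)(p\oplus q)$ is canonically the pullback $p\mathfrak Ap \oplus_{\phi,\pi} q\mathfrak Cq$ over $\phi(p)\mathfrak D\phi(p)=\pi(q)\mathfrak D\pi(q)$. So I may replace $\mathfrak A,\mathfrak C,\mathfrak D$ by $p\mathfrak Ap$, $q\mathfrak Cq$, $\phi(p)\mathfrak D\phi(p)$ and assume all three are unital, $\phi,\pi$ are unital with $\pi$ surjective, $1_\mathfrak A=p$ and $1_\mathfrak C=q$ are properly infinite, and — since $\mathfrak D=\pi(\mathfrak C)$ — $\mathfrak D$ is properly infinite and $K_1$-injective; I must show $1_\mathfrak E=1_\mathfrak A\oplus 1_\mathfrak C$ is properly infinite. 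I will use freely that a properly infinite unital $C^\ast$-algebra is $K_1$-surjective \cite{Cuntz-K-theoryI}.

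\emph{Step 2 (construction).} Fix isometries $a_0,a_1,a_2\in\mathfrak A$ and $c_0,c_1,c_2\in\mathfrak C$ with pairwise orthogonal ranges, and put $s_i=\phi(a_i)$, $t_i=\pi(c_i)$, $e=a_1a_1^\ast+a_2a_2^\ast$, $f=c_1c_1^\ast+c_2c_2^\ast$. Then $1-\phi(e)$ dominates $s_0s_0^\ast\sim 1_\mathfrak D$ and $1-\pi(f)$ dominates $t_0t_0^\ast\sim 1_\mathfrak D$, so both are full and properly infinite, with $[1-\phi(e)]=[1-\pi(f)]=-[1_\mathfrak D]$ in $K_0(\mathfrak D)$. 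By the classification of full properly infinite projections up to Murray--von Neumann equivalence by their $K_0$-class \cite{Cuntz-K-theoryI}, there is a partial isometry $w$ with $w^\ast w=1-\pi(f)$, $ww^\ast=1-\phi(e)$. Set $u^{(0)}:=\sum_{i=1}^2 s_it_i^\ast+w$, a unitary of $\mathfrak D$ with $u^{(0)}t_i=s_i$; more generally, for $v\in\mathcal U\big((1-\pi(f))\mathfrak D(1-\pi(f))\big)$ the element $u^{(v)}:=\sum_i s_it_i^\ast+wv$ is a unitary with $u^{(v)}t_i=s_i$ and $u^{(v)}(u^{(0)})^\ast=\phi(e)+wvw^\ast$, so $[u^{(v)}]=[u^{(0)}]+\iota_\ast[wvw^\ast]$ in $K_1(\mathfrak D)$, where $\iota\colon(1-\phi(e))\mathfrak D(1-\phi(e))\hookrightarrow\mathfrak D$. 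As $v$ runs over the unitary group of the corner $(1-\pi(f))\mathfrak D(1-\pi(f))$ — which is properly infinite, hence $K_1$-surjective — the $wvw^\ast$ run over the full unitary group of $(1-\phi(e))\mathfrak D(1-\phi(e))$, and since $\iota_\ast$ is surjective ($1-\phi(e)$ being full and dominating a copy of $1_\mathfrak D$) the classes $[u^{(v)}]$ exhaust $K_1(\mathfrak D)$. Choose $v$ with $[u^{(v)}]=0$ and put $u:=u^{(v)}$. Then $K_1$-injectivity of $\mathfrak D$ forces $u\in\mathcal U_0(\mathfrak D)$, so $u$ lifts to a unitary $\tilde u\in\mathcal U(\mathfrak C)$ (paths of unitaries lift along $\pi$). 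Finally $y_i:=\tilde uc_i$ ($i=1,2$) are isometries of $\mathfrak C$ with orthogonal ranges and $\pi(y_i)=ut_i=s_i=\phi(a_i)$, hence $(a_i,y_i)\in\mathfrak E$ are isometries with orthogonal ranges and $\sum_i (a_i,y_i)(a_i,y_i)^\ast=(e,\tilde uf\tilde u^\ast)\le 1_\mathfrak E$; thus $1_\mathfrak E$ is properly infinite, and undoing Step 1 gives that $p\oplus q$ is properly infinite in $\mathfrak A\oplus_{\phi,\pi}\mathfrak C$.

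\emph{Main obstacle.} The crux is the middle of Step 2: promoting the partial isometry $\sum s_it_i^\ast$ to a genuine unitary of $\mathfrak D$ — this is where proper infiniteness of $\mathfrak A$ and $\mathfrak C$ (via the extra isometries $a_0,c_0$) is used together with the $K_0$-classification of full properly infinite projections — and then arranging its $K_1$-class to vanish so that it becomes liftable; it is precisely this last point that makes the hypothesis ``$\phi(p)\mathfrak D\phi(p)$ is $K_1$-injective'' indispensable, by converting $[u]=0$ into $u\in\mathcal U_0$. Everything else is routine bookkeeping, and the argument is a repackaging of \cite[Proposition 2.7]{BlanchardRohdeRordam-K1inj}.
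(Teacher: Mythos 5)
Your proof is correct and follows essentially the same route as the paper's: reduce to unital corners, form the partial isometry $\sum_i \phi(a_i)\pi(c_i)^\ast$, extend it to a unitary of $\mathfrak D$ with vanishing $K_1$-class, and use $K_1$-injectivity to conclude it lies in $\mathcal U_0(\mathfrak D)$ and hence lifts through $\pi$. The only difference is that where the paper invokes \cite[Lemma 2.4$(i)$]{BlanchardRohdeRordam-K1inj} to produce the unitary extension with trivial $K_1$-class, you prove that step inline via Cuntz's classification of full properly infinite projections by their $K_0$-class together with $K_1$-surjectivity of properly infinite corners --- a correct unpacking of the same argument.
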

\begin{proof}
By replacing $\mathfrak A$, $\mathfrak C$ and $\mathfrak D$ with $p\mathfrak A p $, $q \mathfrak C q$ and $\phi(p) \mathfrak D \phi(p)$, we may assume that $\mathfrak A$, $\mathfrak C$ and $\mathfrak D$ are unital and properly infinite, that $\phi$ and $\pi$ are unital maps, and that $\mathfrak D$ is $K_1$-injective. Under these assumptions, we should show that $\mathfrak A \oplus_{\phi, \pi}\mathfrak C$ is properly infinite.

The result now follows from \cite[Proposition 2.7]{BlanchardRohdeRordam-K1inj}. In fact, although said result assumes that both maps are surjective (corresponding in our case to $\phi$ and $\pi$), they only use that one map is surjective. 
We fill in the proof for completion.

Let $s_1,s_2,s_3\in \mathfrak A$ and $t_1, t_2, t_3 \in \mathfrak C$ be isometries with mutually orthogonal range projections. Let
\[
v := \sum_{j=1}^2 \phi (s_j) \pi (t_j)^\ast \in \mathfrak D
\]
which is a partial isometry satisfying $\phi (s_j) = v \pi  (t_j)$ for $j=1,2$. Note that
\[
1_{\mathfrak D} \sim \phi (s_3s_3^\ast) \leq 1_{\mathfrak D}- vv^\ast ,\qquad 1_{\mathfrak D}\sim \pi (t_3t_3^\ast) \leq 1_{\mathfrak D}- v^\ast v.
\]
It follows that $1_{\mathfrak D}- vv^\ast$ and $1_{\mathfrak D}-v^\ast v$ are properly infinite and full in $\mathfrak D$. By \cite[Lemma 2.4$(i)$]{BlanchardRohdeRordam-K1inj} there is a unitary $u\in \mathfrak D$ with $[u]=0\in K_1(\mathfrak D)$ such that $v = u v^\ast v$. As $\mathfrak D$ is $K_1$-injective, it follows that $u$ is homotopic to $1$, and thus lifts to a unitary $\widetilde u\in \mathfrak C$.

Clearly $\widetilde u t_1, \widetilde u t_2 \in \mathfrak C$ are isometries with orthogonal range projections, and
\[
\pi(\widetilde u t_j) = u \pi(t_j) = v \pi(t_j) = \phi (s_j)
\]
so $s_j \oplus \widetilde u t_j \in \mathfrak A \oplus_{\phi, \pi} \mathfrak C$ for $j=1,2$ are isometries with orthogonal range projections. Hence $\mathfrak A\oplus_{\phi, \pi} \mathfrak C$ is properly infinite.
\end{proof}

By the above lemma we deduce the following property about proper infiniteness of projections in purely large extensions (see Remark \ref{r:purelylarge}).

\begin{proposition}\label{p:propinfext}
Let $0 \to \mathfrak B \to \mathfrak E \to \mathfrak A \to 0$ be a purely large extension of separable $C^\ast$-algebras such that $\mathfrak B$ is stable, and suppose that $p\in \mathfrak E \setminus \mathfrak B$ is a projection. Then $p$ is properly infinite if and only if $p+ \mathfrak B \in \mathfrak A$ is properly infinite.
\end{proposition}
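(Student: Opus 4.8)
The plan is to prove the easy implication by pushing forward along the quotient map $\pi \colon \mathfrak E \to \mathfrak A$, and the substantive implication by realising $\mathfrak E$ as a pull-back and applying Lemma~\ref{l:propinfpullback}, using pure largeness to verify the hypotheses of that lemma.

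If $p$ is properly infinite, pick isometries $s_1, s_2 \in p\mathfrak E p$ with $s_1 s_1^\ast + s_2 s_2^\ast \le p$; since $p \notin \mathfrak B$ we have $\pi(p) = p + \mathfrak B \ne 0$, and $\pi(s_1), \pi(s_2)$ are isometries in the corner $(p+\mathfrak B)\,\mathfrak A\,(p+\mathfrak B)$ with orthogonal range projections dominated by $p + \mathfrak B$, so $p + \mathfrak B$ is properly infinite.

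Conversely, assume $\bar p := p + \mathfrak B$ is properly infinite in $\mathfrak A$. Write $\mathfrak E = \mathfrak A \oplus_{\tau, \pi_{\mathfrak B}} \multialg{\mathfrak B}$ for the Busby map $\tau$, so that $p$ corresponds to $\bar p \oplus P$ with $P = \sigma(p) \in \multialg{\mathfrak B}$ a projection and $\tau(\bar p) = \pi_{\mathfrak B}(P)$. I will invoke Lemma~\ref{l:propinfpullback} with $\phi = \tau$, with $\pi_{\mathfrak B} \colon \multialg{\mathfrak B} \twoheadrightarrow \corona{\mathfrak B}$ playing the role of its surjection, and with the projections $\bar p$ and $P$; the lemma then yields that $p = \bar p \oplus P$ is properly infinite, provided (a) $P$ is properly infinite in $\multialg{\mathfrak B}$ and (b) the corner $\tau(\bar p)\,\corona{\mathfrak B}\,\tau(\bar p) = \pi_{\mathfrak B}(P)\,\corona{\mathfrak B}\,\pi_{\mathfrak B}(P)$ is $K_1$-injective.

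Both (a) and (b) will be derived from pure largeness together with the stability of $\mathfrak B$. Applying pure largeness to $x = p$ gives a stable $C^\ast$-subalgebra $\mathfrak D \subseteq \overline{p^\ast \mathfrak B p} = p\mathfrak B p$ with $\overline{\mathfrak B \mathfrak D \mathfrak B} = \mathfrak B$; identifying $p\mathfrak B p$ with $P\mathfrak B P$ via the pull-back, we see that $P\mathfrak B P$ contains a stable subalgebra that is full in $\mathfrak B$. In particular $P$ is strictly full in $\multialg{\mathfrak B}$, so $P\multialg{\mathfrak B}P = \multialg{P\mathfrak B P}$ and, dividing by $P\mathfrak B P$, the corner $\pi_{\mathfrak B}(P)\,\corona{\mathfrak B}\,\pi_{\mathfrak B}(P)$ is isomorphic to $\corona{P\mathfrak B P}$; hence for (b) it is enough to show that $P\mathfrak B P$ is stable and then apply Proposition~\ref{p:K1corona}. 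For (a) one exploits that $\mathfrak D$ is stable: writing $\mathfrak D \cong \mathfrak D \otimes \mathbb K$, the standard matrix units yield, on the full Hilbert $\mathfrak B$-module $X = \overline{\mathfrak D \mathfrak B}$, a sequence of isometries with pairwise orthogonal ranges, which exhibits the identity of $\mathcal L(X) = \multialg{\mathcal K(X)}$ as properly infinite, and one transports this infiniteness back to $P$. I expect step (a), and the stability of $P\mathfrak B P$ needed in (b), to be the crux of the argument: both come down to showing that, for a purely large extension with stable ideal, the fact that the hereditary subalgebra $p\mathfrak B p$ of the stable algebra $\mathfrak B$ contains a full stable subalgebra forces $p\mathfrak B p$ itself to be stable and $\sigma(p)$ to be properly infinite in $\multialg{\mathfrak B}$ — the type of structural input handled by the stability criteria of Hjelmborg–Rørdam and by the work of Kucerovsky–Ng on the corona factorization property, adapted to the present setting.
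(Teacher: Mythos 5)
Your overall strategy is the same as the paper's: handle the forward direction by pushing isometries through the quotient map, and the converse by realising $\mathfrak E$ as the pull-back $\mathfrak A \oplus_{\tau,\pi_{\mathfrak B}} \multialg{\mathfrak B}$ and feeding Lemma~\ref{l:propinfpullback} the two inputs (a) $P=\sigma(p)$ is properly infinite in $\multialg{\mathfrak B}$ and (b) the corner $\pi_{\mathfrak B}(P)\corona{\mathfrak B}\pi_{\mathfrak B}(P)$ is $K_1$-injective. The gap is that you never actually establish (a), and the route you sketch for it does not work as stated. Pure largeness applied to the single element $x=p$ only yields a stable subalgebra $\mathfrak D\subseteq P\mathfrak B P$ with $\overline{\mathfrak B\mathfrak D\mathfrak B}=\mathfrak B$, and the existence of a full stable subalgebra of $P\mathfrak B P$ does not by itself force $P\mathfrak B P$ to be stable, nor $P$ to be properly infinite: for instance, with $h=\sum_n 2^{-n}s_ns_n^\ast\in\mathcal O_\infty$ the hereditary subalgebra $\overline{h\,\mathcal O_\infty\, h}\cong\mathcal O_\infty\otimes\mathbb K$ is full and stable in $\mathcal O_\infty$, which is not stable. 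Your proposed transport of proper infiniteness from $\mathcal L(\overline{\mathfrak D\mathfrak B})$ back to $P$ founders on the same point: $\mathcal K(\overline{\mathfrak D\mathfrak B})$ is only a full hereditary subalgebra of $\mathfrak B$ sitting inside $P\mathfrak B P$, so proper infiniteness of its multiplier unit says nothing directly about the (possibly much larger) projection $P$. Note also that Lemma~\ref{l:stableAF} is unavailable here, since $\mathfrak B$ is not assumed to have stable rank one in this proposition, and the Hjelmborg--R{\o}rdam/Kucerovsky--Ng results you gesture at do not straightforwardly supply the missing implication.

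The paper closes exactly this gap by a different use of pure largeness: since the given extension is purely large, so is the restricted extension $0\to\mathfrak B\to\mathfrak B+\mathbb C P\to\mathbb C\to 0$ (this uses the purely large condition for \emph{all} elements $b+\lambda P$ with $\lambda\neq 0$, not just for $P$ itself), and then \cite[Proposition 2.7]{Gabe-nonunitalext} yields that $P$ is a properly infinite, \emph{full} projection in $\multialg{\mathfrak B}$. Everything else you want then follows cheaply: $K_0(\multialg{\mathfrak B})=0$ and Cuntz's uniqueness for full properly infinite projections give $P\sim 1_{\multialg{\mathfrak B}}$, hence $P\mathfrak B P\cong\mathfrak B$ is stable and $\pi_{\mathfrak B}(P)\corona{\mathfrak B}\pi_{\mathfrak B}(P)\cong\corona{\mathfrak B}$ is $K_1$-injective by Proposition~\ref{p:K1corona}. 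To repair your argument, replace step (a) by this citation (or reprove that proposition), and derive the stability needed in (b) from the resulting fullness and proper infiniteness of $P$ rather than attempting an independent stability argument for $P\mathfrak B P$.
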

\begin{proof}
``Only if'' is trivial. To prove ``if'', assume that the image of $p$ in $\mathfrak A$ is properly infinite. Let $\tau \colon \mathfrak A \to \corona{\mathfrak B}$ be the Busby map of the extension. We may identify $\mathfrak E$ with the pull-back $\mathfrak A \oplus_{\tau, \pi_{\mathfrak B}} \multialg{\mathfrak B}$. Let $q\in \multialg{\mathfrak B}$ be the projection induced by $p$. As purely large extensions are full,\footnote{It is easy to see that an extension $\mathfrak e$ is full if and only if the Cuntz sum $\mathfrak e \oplus 0$ is full. If $\mathfrak e$ is purely large, then $\mathfrak e \oplus 0$ is nuclearly absorbing by \cite[Corollary 2.4]{Gabe-nonunitalext}. As $\mathfrak e \oplus 0$ absorbs any full, trivial, weakly nuclear extension (which always exist), it follows that $\mathfrak e \oplus 0$ -- and thus also $\mathfrak e$ -- is full.} it follows that $q$ is full in $\multialg{\mathfrak B}$. As our given extension is purely large it easily follows that the extension
\[
0 \to \mathfrak B \to \mathfrak B + \mathbb C q \to \mathbb C \to 0
\]
is purely large. By \cite[Proposition 2.7]{Gabe-nonunitalext} it follows that $q$ is a properly infinite, full projection in $\multialg{\mathfrak B}$. Hence $q \mathfrak B q \cong \mathfrak B$ is stable, and thus $\pi_{\mathfrak B} (q) \corona{\mathfrak B} \pi_\mathfrak{B} (q) \cong \corona{\mathfrak B}$ is $K_1$-injective by Proposition \ref{p:K1corona}. By Lemma \ref{l:propinfpullback}, $p$ is properly infinite.
\end{proof}

\begin{proposition}\label{p:propinfvsfull}
Let $\mathfrak e : 0 \to \mathfrak B \to \mathfrak E \to \mathfrak A \to 0$ be an extension of separable $C^\ast$-algebras for which $\mathfrak A$ is simple and $\mathfrak B$ has stable rank one and the corona factorisation property. Suppose that there is a projection $p \in \mathfrak E \setminus \mathfrak B$ such that $p + \mathfrak B \in \mathfrak A$ is properly infinite. Then $\mathfrak B$ is stable and $\mathfrak e$ is full if and only if $p$ is full and properly infinite in $\mathfrak E$.
\end{proposition}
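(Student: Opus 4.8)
\emph{Approach.} The strategy is to transfer properties of the projection $p$ in $\mathfrak E$ to properties of its image $q:=\sigma(p)$ in $\multialg{\mathfrak B}$, where $\sigma\colon\mathfrak E\to\multialg{\mathfrak B}$ is the canonical $\ast$-homomorphism, and to route everything through the restricted extension
\[
\mathfrak e'\colon\quad 0\to\mathfrak B\to\pi^{-1}(\mathbb C\,\pi(p))\to\mathbb C\to 0,
\]
which is the pull-back of $\mathfrak e$ along $\mathbb C\to\mathfrak A$, $1\mapsto\pi(p)$; its associated multiplier projection is precisely $q$, and its Busby map sends $1$ to $\pi_{\mathfrak B}(q)=\tau(\pi(p))$, where $\tau$ is the Busby map of $\mathfrak e$. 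Two elementary facts will be used freely: (i) identifying $\mathfrak B\subseteq\mathfrak E$ with $\mathfrak B\subseteq\multialg{\mathfrak B}$ via $\sigma$, one has $\overline{\mathfrak B p\mathfrak B}=\overline{\mathfrak B q\mathfrak B}$ in $\mathfrak B$; and (ii) $q$ is the image of a projection under a $\ast$-homomorphism, hence properly infinite in $\multialg{\mathfrak B}$ whenever $p$ is properly infinite in $\mathfrak E$ and $q\neq 0$ — and $q\neq 0$ whenever $p$ is full in $\mathfrak E$, since $\ker\sigma$ is a proper ideal of $\mathfrak E$. I would also first record the key observation: \emph{if $p$ is full in $\mathfrak E$, then $q$ is strictly full, i.e.\ $\overline{\mathfrak B q\mathfrak B}=\mathfrak B$.} Indeed, $\mathfrak I_0:=\overline{\mathfrak B p\mathfrak B}=\overline{\mathfrak B q\mathfrak B}$ is a closed two-sided ideal of $\mathfrak E$ contained in $\mathfrak B$; in $\mathfrak E/\mathfrak I_0$ the image $\overline p$ of $p$ is a non-zero projection with $(\mathfrak B/\mathfrak I_0)\,\overline p\,(\mathfrak B/\mathfrak I_0)=0$, so $x\,\overline p\,x^\ast=0$, hence $x\,\overline p=0$, for every $x\in\mathfrak B/\mathfrak I_0$; taking adjoints, $\overline p$ annihilates $\mathfrak B/\mathfrak I_0$ on both sides. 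Fullness of $p$ makes $\overline p$ generate $\mathfrak E/\mathfrak I_0$ as an ideal, so $(\mathfrak B/\mathfrak I_0)(\mathfrak E/\mathfrak I_0)=0$, forcing $\mathfrak B/\mathfrak I_0=0$.

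\emph{First direction: $\mathfrak B$ stable and $\mathfrak e$ full $\Rightarrow$ $p$ full and properly infinite.} Since $\mathfrak B$ is stable and has the corona factorisation property, $\mathfrak e$ is purely large by the Kucerovsky--Ng criterion (Remark~\ref{r:purelylarge}); as $p\in\mathfrak E\setminus\mathfrak B$ and $p+\mathfrak B$ is properly infinite in $\mathfrak A$, Proposition~\ref{p:propinfext} gives that $p$ is properly infinite in $\mathfrak E$. For fullness of $p$: simplicity of $\mathfrak A$ and $\pi(p)\neq 0$ yield $\overline{\mathfrak E p\mathfrak E}+\mathfrak B=\mathfrak E$; on the other hand $\mathfrak e'$, being a pull-back of the purely large extension $\mathfrak e$, is again purely large, so by \cite[Proposition~2.7]{Gabe-nonunitalext} the projection $q$ is (properly infinite and) full in $\multialg{\mathfrak B}$. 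Writing $1_{\multialg{\mathfrak B}}$ as a norm-limit of finite sums $\sum_i x_iqy_i$ and cutting down by $b^{1/2}$ for positive $b\in\mathfrak B$ then shows $\overline{\mathfrak B q\mathfrak B}=\mathfrak B$, so $\mathfrak B=\overline{\mathfrak B p\mathfrak B}\subseteq\overline{\mathfrak E p\mathfrak E}$, whence $\overline{\mathfrak E p\mathfrak E}=\mathfrak E$.

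\emph{Second direction: $p$ full and properly infinite $\Rightarrow$ $\mathfrak B$ stable and $\mathfrak e$ full.} By the key observation $q$ is strictly full, and by (ii) it is properly infinite; since $\mathfrak B$ is separable with stable rank one, Lemma~\ref{l:stableAF} shows that $\mathfrak B$ is stable and that $q\mathfrak B q$ is stable. It remains to prove $\mathfrak e$ is full. Since $\mathfrak A$ is simple, every non-zero $a\in\mathfrak A$ generates $\mathfrak A$ as an ideal, so $\tau(\pi(p))$ lies in the ideal of $\corona{\mathfrak B}$ generated by $\tau(a)$; it therefore suffices to show $\pi_{\mathfrak B}(q)=\tau(\pi(p))$ is full in $\corona{\mathfrak B}$. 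Now $q\mathfrak B q$ is a full, stable hereditary subalgebra of $\mathfrak B$; using the corona factorisation property of $\mathfrak B$ one upgrades this to fullness of $q$ in $\multialg{\mathfrak B}$, hence (again by the corona factorisation property) $q\sim 1_{\multialg{\mathfrak B}}$, so $\pi_{\mathfrak B}(q)\sim 1_{\corona{\mathfrak B}}$ is full. Equivalently, and in parallel with the first direction, \cite[Proposition~2.7]{Gabe-nonunitalext} (using the corona factorisation property of $\mathfrak B$) shows that $\mathfrak e'$ is purely large, hence full, which says exactly that $\pi_{\mathfrak B}(q)$ is full in $\corona{\mathfrak B}$.

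\emph{Main obstacle.} The ideal chase in the key observation and the bookkeeping with $\sigma$ are routine. The genuine difficulty is the step in the second direction where strict fullness of $q$ (equivalently, $q\mathfrak B q$ being a full hereditary subalgebra of $\mathfrak B$) must be promoted to fullness of $\pi_{\mathfrak B}(q)$ in $\corona{\mathfrak B}$; this is false without further hypotheses — a rank-one projection in $\multialg{\mathbb K}$ is strictly full but not full — and it is precisely here that proper infiniteness of $q$, which via Lemma~\ref{l:stableAF} forces $q\mathfrak B q$ to be stable, together with the corona factorisation property of $\mathfrak B$, are indispensable. I expect the cleanest packaging is to run both directions through the single equivalence ``$\mathfrak e'$ is purely large if and only if $q$ is properly infinite and full in $\multialg{\mathfrak B}$'' from \cite{Gabe-nonunitalext}, resting on \cite{KucerovskyNg-corona} and \cite{OrtegaPereraRordam-cfp}.
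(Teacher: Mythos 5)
Your architecture matches the paper's: for ``only if'' you get pure largeness from Kucerovsky--Ng and proper infiniteness from Proposition \ref{p:propinfext}, and for ``if'' you pass to $q=\sigma(p)\in\multialg{\mathfrak B}$, show $q$ is strictly full and properly infinite, invoke Lemma \ref{l:stableAF} to get $\mathfrak B$ and $q\mathfrak Bq$ stable, and then try to upgrade to fullness of $q$ in $\multialg{\mathfrak B}$ so that simplicity of $\mathfrak A$ finishes the job. Your ``key observation'' (fullness of $p$ in $\mathfrak E$ forces $\overline{\mathfrak B p\mathfrak B}=\mathfrak B$) is a correct proof of something the paper merely asserts, and the rest of the bookkeeping with $\sigma$ and the corner extension $\mathfrak e'$ is fine.

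The gap is exactly at the step you yourself single out as the main obstacle, and neither of your two proposed justifications closes it. First, ``using the corona factorisation property one upgrades this to fullness of $q$ in $\multialg{\mathfrak B}$'' is a non sequitur: the CFP is a statement \emph{about} projections already known to be full (they are Murray--von Neumann equivalent to $1$), so it cannot be the tool that establishes fullness of $q$ from strict fullness and stability of $q\mathfrak Bq$. Second, the appeal to \cite[Proposition~2.7]{Gabe-nonunitalext} runs in the wrong direction: as that result is used in the proof of Proposition \ref{p:propinfext}, its content is that pure largeness of $0\to\mathfrak B\to\mathfrak B+\mathbb Cq\to\mathbb C\to 0$ \emph{implies} that $q$ is full and properly infinite in $\multialg{\mathfrak B}$; to deduce pure largeness of $\mathfrak e'$ from it you would need $q$ full in $\multialg{\mathfrak B}$ as an input, which is precisely what you are trying to prove. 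What actually bridges the gap in the paper is \cite[Theorem~4.23]{Brown-semicontmultipliers}: since $q\mathfrak Bq$ is a full, \emph{stable} hereditary subalgebra of the separable stable $C^\ast$-algebra $\mathfrak B$, Brown's theorem yields that $q$ is a full projection of $\multialg{\mathfrak B}$ (indeed equivalent to $1_{\multialg{\mathfrak B}}$), after which $\pi_{\mathfrak B}(q)=\tau(\pi(p))$ is full in $\corona{\mathfrak B}$ and simplicity of $\mathfrak A$ gives fullness of $\mathfrak e$. So the CFP is not needed in the ``if'' direction at all; the missing ingredient is Brown's result on full stable corners, not a corona factorisation argument.
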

\begin{proof}
``Only if'' follows from Proposition \ref{p:propinfext} as $\mathfrak e$ is purely large by the corona factorisation property. For ``if'' suppose that $p$ is full and properly infinite. Then $\mathfrak B = \overline{\mathfrak B p \mathfrak B}$ by fullness of $p$. By Lemma \ref{l:stableAF} it follows that $\mathfrak B$ is stable, and $\mathfrak B \cong p \mathfrak B p$. Hence by \cite[Theorem 4.23]{Brown-semicontmultipliers}, $p$ induces a full projection in $\multialg{\mathfrak B}$. As $\mathfrak A$ is simple, and as $p + \mathfrak B \in \mathfrak A$ is mapped to a full projection in $\corona{\mathfrak B}$ via the Busby map, it follows that the extension $\mathfrak e$ is full.
\end{proof}

The following can be used to characterise when the extensions we wish to classify are full. 

\begin{theorem}\label{t:fullext}
Let $\mathfrak e : 0 \to \mathfrak B \to \mathfrak E \to \mathfrak A \to 0$ be an extension of $C^\ast$-algebras such that $\mathfrak A$ is a Kirchberg algebra and $\mathfrak B$ is an AF algebra. The following are equivalent.
\begin{itemize}
\item[$(i)$] $\mathfrak B$ is stable and the extension $\mathfrak e$ is full,
\item[$(ii)$] $\mathfrak E$ contains a full, properly infinite projection,
\item[$(iii)$] any projection $p\in \mathfrak E \setminus \mathfrak B$ is full and properly infinite (in $\mathfrak E$).
\end{itemize}
\end{theorem}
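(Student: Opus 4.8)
The plan is to prove the cycle $(iii)\Rightarrow(ii)\Rightarrow(i)\Rightarrow(iii)$, using Proposition \ref{p:propinfvsfull} for two of the three implications and a projection--lifting argument for the third.

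First I would record the structural consequences of the hypotheses. Since $\mathfrak B$ is AF it is separable, has stable rank one and real rank zero, satisfies $K_1(\mathfrak B)=0$, has the corona factorisation property (its nuclear dimension is $0$, cf.\ Remark \ref{r:purelylarge}), and is stably finite --- so no nonzero projection in $\mathfrak B$ is properly infinite. Since $\mathfrak A$ is a Kirchberg algebra it is separable, simple, nuclear and nonzero, and every nonzero projection of $\mathfrak A$ is properly infinite. Hence $\mathfrak E$ is separable, and for any projection $p\in\mathfrak E\setminus\mathfrak B$ the image $p+\mathfrak B\in\mathfrak A$ is a nonzero, hence properly infinite, projection; in other words all the hypotheses of Proposition \ref{p:propinfvsfull} hold for the pair $(\mathfrak e,p)$.

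With this in hand, two of the implications are immediate. For $(ii)\Rightarrow(i)$: if $p\in\mathfrak E$ is full and properly infinite, then $p\neq0$ (by fullness, as $\mathfrak A\neq0$ forces $\mathfrak E\neq0$) and $p\notin\mathfrak B$ (otherwise $p$ would be a nonzero properly infinite projection in the stably finite algebra $\mathfrak B$), so Proposition \ref{p:propinfvsfull} applies to $(\mathfrak e,p)$ and its ``if'' direction gives that $\mathfrak B$ is stable and $\mathfrak e$ is full. For $(i)\Rightarrow(iii)$: given $(i)$ and an arbitrary projection $p\in\mathfrak E\setminus\mathfrak B$, Proposition \ref{p:propinfvsfull} again applies to $(\mathfrak e,p)$, and its ``only if'' direction gives that $p$ is full and properly infinite in $\mathfrak E$.

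The remaining implication $(iii)\Rightarrow(ii)$ is where the AF hypothesis on $\mathfrak B$ (beyond stable rank one and the corona factorisation property) is genuinely used, and I expect it to be the main obstacle: one must exhibit at least one projection of $\mathfrak E$ lying outside $\mathfrak B$, since otherwise $(iii)$ would hold vacuously while $(ii)$ would fail (every projection of $\mathfrak E$ would then lie in the stably finite algebra $\mathfrak B$). Here I would fix a nonzero projection $q\in\mathfrak A$ and lift it to a projection $p\in\mathfrak E$; this is possible because $\mathfrak B$ is AF, so $K_1(\mathfrak B)=0$ and $\mathfrak B$ has an approximate unit of projections. Concretely: take a self-adjoint lift $y\in\mathfrak E$ with $0\le y\le1$, observe that $y-y^2\in\mathfrak B$, use real rank zero of $\mathfrak B$ to perturb $y$ within $y+\mathfrak B$ so that $\tfrac12$ is no longer in its spectrum, and set $p=\chi_{(1/2,1]}(y')$ for the perturbed element $y'$, so that $\pi(p)=q$. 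Then $p\in\mathfrak E\setminus\mathfrak B$, and by $(iii)$ it is full and properly infinite, which is exactly $(ii)$. Apart from this lifting step, everything reduces to Proposition \ref{p:propinfvsfull} together with the elementary dichotomy that nonzero projections in a Kirchberg algebra are properly infinite while nonzero projections in an AF algebra are finite.
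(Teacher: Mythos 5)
Your proof is correct and follows essentially the same route as the paper: the same cycle of implications, with the steps between $(i)$ and $(iii)$ handled via Proposition \ref{p:propinfvsfull} (the paper invokes Proposition \ref{p:propinfext} directly for $(i)\Rightarrow(iii)$, which amounts to the same thing), and $(iii)\Rightarrow(ii)$ obtained by lifting a nonzero projection of $\mathfrak A$ using real rank zero of the AF ideal, for which the paper simply cites \cite[Proposition 3.15]{BrownPedersen-rr0} rather than sketching the perturbation argument. Your additional explicit checks --- that a full properly infinite projection cannot lie in the stably finite ideal $\mathfrak B$, and that the lifted projection lies outside $\mathfrak B$ --- are correct and merely make explicit what the paper leaves implicit.
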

\begin{proof}
$(i) \Rightarrow (iii)$: Suppose that $p\in \mathfrak E \setminus \mathfrak B$ is a projection. Fullness of $\mathfrak e$ and simplicity of $\mathfrak A$ imply that $p$ is full. As $\mathfrak B$ has the corona factorisation property by virtue of being an AF algebra, it follows from Proposition \ref{p:propinfext} that $p$ is properly infinite.

$(iii) \Rightarrow (ii)$: Let $q\in \mathfrak A$ be a non-zero projection. By \cite[Proposition 3.15]{BrownPedersen-rr0}, $q$ lifts to a projection $p\in \mathfrak E \setminus \mathfrak B$, which is properly infinite and full by assumption.

$(ii) \Rightarrow (i)$: Follows from Proposition \ref{p:propinfvsfull}
\end{proof}

%%%%%%%%%%%%%%%%%%%%%%%%%%%%%%%%%%%%%%%%%%%%%%%%%%%%%%%%%
%%%%%%%%%%%%%%%%%%%%%%%%%%%%%%%%%%%%%%%%%%%%%%%%%%%%%%%%%

\section{Classification of non-unital extensions}\label{s:nonunital}

In \cite[Section 4]{Gabe-nonunitalext} an example was given of two non-unital, full extensions $\mathfrak e_i : 0 \to \mathfrak B_i \to \mathfrak E_i \to \mathfrak A_i  \to 0$ such that $\mathfrak A_i \cong \mathcal O_2$, $\mathfrak B_i \cong M_{2^\infty} \otimes \mathbb K$, $\Ksixnu(\mathfrak e_1) \cong \Ksixnu(\mathfrak e_2)$ (with order, scale and units preserved), but for which $\mathfrak E_1 \not \cong \mathfrak E_2$. In this section we will describe how to obtain classification of such (and more general) extensions. Note that our invariant needs to carry more information than the six-term exact sequence alone.

The following lemma indicates the main trick that will be used to get classification of non-unital extensions with unital quotients. It implies that if one can arrange that the corresponding Busby maps have the same unit, and that the units in the quotients lift to projections, then the classification problem can be reduced to the unital case.

\begin{lemma}\label{l:unitvsnonunit}
Let $\mathfrak A$ and $\mathfrak B$ be $C^\ast$-algebras with $\mathfrak A$ unital, and let $\tau_i \colon \mathfrak A \to \corona{\mathfrak B}$ be (not necessarily unital) Busby maps for $i=1,2$. Suppose that $\tau_1(1_\mathfrak{A}) = \tau_2(1_{\mathfrak A})$, and that this projection lifts to a projection $p\in \multialg{\mathfrak B}$. If the unital extensions
\begin{equation}\label{eq:cutdownext}
0 \to p \mathfrak B p \to (1_{\mathfrak A}\oplus p)(\mathfrak A \oplus_{\tau_i, \pi_{\mathfrak B}} \multialg{\mathfrak B} ) (1_{\mathfrak A} \oplus p) \to \mathfrak A \to 0
\end{equation}
for $i=1,2$ are strongly unitarily equivalent, then so are the extensions induced by $\tau_1$ and $\tau_2$.
\end{lemma}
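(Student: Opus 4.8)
The plan is to take the unitary that implements strong unitary equivalence of the two cutdown extensions, extend it by the complementary projection $1_{\multialg{\mathfrak B}}-p$, and verify that the resulting unitary of $\multialg{\mathfrak B}$ implements a strong unitary equivalence of the extensions with Busby maps $\tau_1$ and $\tau_2$. Throughout I would write $q := \tau_1(1_{\mathfrak A}) = \tau_2(1_{\mathfrak A})\in\corona{\mathfrak B}$, so that $\pi_{\mathfrak B}(p) = q$, and $\mathfrak E_i := \mathfrak A\oplus_{\tau_i,\pi_{\mathfrak B}}\multialg{\mathfrak B}$, and I would identify $\mathfrak B$ with $0\oplus\mathfrak B\subseteq\mathfrak E_i$.

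First I would record the standard identification $\multialg{p\mathfrak B p}\cong p\multialg{\mathfrak B}p$ (for instance, viewing $\mathfrak B$ as a Hilbert module over itself, $p\mathfrak B$ is a complemented Hilbert $\mathfrak B$-submodule with $\mathcal K(p\mathfrak B)\cong p\mathfrak B p$ and $\mathcal L(p\mathfrak B)\cong p\multialg{\mathfrak B}p$). Passing to coronas this yields a canonical isomorphism $\corona{p\mathfrak B p}\cong q\corona{\mathfrak B}q$, the point being that the kernel of the corner-restricted map $p\multialg{\mathfrak B}p\to q\corona{\mathfrak B}q$ is exactly $p\mathfrak B p = \mathfrak B\cap p\multialg{\mathfrak B}p$. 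Next I would unwind the Busby map of the cutdown extension \eqref{eq:cutdownext}: one checks directly that $(1_{\mathfrak A}\oplus p)\mathfrak E_i(1_{\mathfrak A}\oplus p)$ has ideal $0\oplus p\mathfrak B p$ and quotient $\mathfrak A$ (surjectivity of the quotient map uses $\pi_{\mathfrak B}(pmp) = q\tau_i(a)q = \tau_i(a)$ whenever $\pi_{\mathfrak B}(m) = \tau_i(a)$), and lifting $a\in\mathfrak A$ to $a\oplus pmp$ and computing the induced left multiplier of $p\mathfrak B p$ shows that, under the identifications above, this Busby map is just $a\mapsto\tau_i(a)$, viewed as a map $\mathfrak A\to q\corona{\mathfrak B}q$ (which is legitimate since $\tau_i(a) = q\tau_i(a)q$ because $\tau_i(1_{\mathfrak A}) = q$).

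With this in hand, the hypothesis that the two cutdown extensions are strongly unitarily equivalent provides a unitary $w\in\multialg{p\mathfrak B p} = p\multialg{\mathfrak B}p$, so $w^\ast w = ww^\ast = p$ and $wp = pw = w$, with $\pi_{\mathfrak B}(w)\tau_1(a)\pi_{\mathfrak B}(w)^\ast = \tau_2(a)$ for all $a\in\mathfrak A$. I would then set $u := w + (1_{\multialg{\mathfrak B}}-p)\in\multialg{\mathfrak B}$; from $w(1-p) = (1-p)w = 0$ one gets $u^\ast u = uu^\ast = 1_{\multialg{\mathfrak B}}$, so $u$ is a unitary. Finally, using $\pi_{\mathfrak B}(u)q = \pi_{\mathfrak B}(w)$, hence $q\pi_{\mathfrak B}(u)^\ast = \pi_{\mathfrak B}(w)^\ast$, together with $\tau_1(a) = q\tau_1(a)q$, one computes
\[
\pi_{\mathfrak B}(u)\,\tau_1(a)\,\pi_{\mathfrak B}(u)^\ast = \pi_{\mathfrak B}(w)\,\tau_1(a)\,\pi_{\mathfrak B}(w)^\ast = \tau_2(a)
\]
for all $a\in\mathfrak A$, i.e.\ $\Ad \pi_{\mathfrak B}(u)\circ\tau_1 = \tau_2$, which says precisely that the extensions induced by $\tau_1$ and $\tau_2$ are strongly unitarily equivalent.

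The only genuine subtlety — the step I would be most careful about — is the bookkeeping in identifying the Busby map of the cutdown extension, and in particular making sure that the unitary $w$ implementing strong unitary equivalence of the cutdowns really sits inside the corner $p\multialg{\mathfrak B}p\subseteq\multialg{\mathfrak B}$ (this is exactly where $\multialg{p\mathfrak B p} = p\multialg{\mathfrak B}p$ enters), so that $u = w + (1_{\multialg{\mathfrak B}}-p)$ makes sense as a multiplier of $\mathfrak B$. Everything else is a routine computation with the pull-back descriptions of the extension algebras.
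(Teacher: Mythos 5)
Your argument is correct and is essentially identical to the paper's proof: both identify the Busby map of the cutdown extension with the corestriction of $\tau_i$ to the corner $\tau_i(1_{\mathfrak A})\corona{\mathfrak B}\tau_i(1_{\mathfrak A})\cong\corona{p\mathfrak B p}$, use $\multialg{p\mathfrak B p}\cong p\multialg{\mathfrak B}p$ to view the implementing unitary as an element of the corner, and extend it by $1_{\multialg{\mathfrak B}}-p$ to a unitary of $\multialg{\mathfrak B}$ conjugating $\tau_1$ to $\tau_2$. Your write-up simply fills in more of the bookkeeping that the paper leaves implicit.
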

\begin{proof}
The Busby maps $\widetilde \tau_i$ of the extensions \eqref{eq:cutdownext} are just the corestrictions of the Busby maps $\tau_i$ to $\tau_i(1_{\mathfrak A}) \corona{B} \tau_i(1_{\mathfrak A}) \cong \corona{p\mathfrak B p}$ (the canonical isomorphism). By assumption there is a unitary $\widetilde u\in \multialg{p\mathfrak B p}$ such that $\Ad \pi_{p \mathfrak B p}(\widetilde u) \circ \widetilde \tau_1 = \widetilde \tau_2$. Using the canonical identification $\multialg{p\mathfrak B p} \cong p \multialg{\mathfrak B} p$, let $u = \widetilde u + (1_{\multialg{B}} - p)$. Then $u$ is a unitary in $\multialg{\mathfrak B}$ satisfying $\Ad \pi_{\mathfrak B}(u) \circ \tau_1 = \tau_2$.
\end{proof}

The next goal will be to arrange that $\tau_1 (1_\mathfrak{A}) = \tau_2(1_{\mathfrak A}) \in \corona{\mathfrak B}$ by twisting one extension by an automorphism on $\mathfrak B$. For this we introduce the following notation.

\begin{notation}\label{n:De}
Let $\mathfrak e  : 0 \to \mathfrak B \to \mathfrak E \xrightarrow \pi \mathfrak A \to 0$ be an extension of $C^\ast$-algebras where $\mathfrak A$ is unital, but $\mathfrak E$ is not necessarily unital. Let 
\[
\mathfrak D_{\mathfrak e} := \pi^{-1}(\mathbb C 1_\mathfrak A) \subseteq \mathfrak E.
\]
In the case where $\mathfrak E$ is unital, then $\mathfrak D = \widetilde{\mathfrak B}$ is the (forced) unitisation of $\mathfrak B$. 
\end{notation}

\begin{lemma}\label{l:De}
Let $\mathfrak e_i : 0 \to \mathfrak B_i \to \mathfrak E_i \to \mathfrak A_i \to 0$ be extensions of $C^\ast$-algebras for $i=1,2$ with Busby maps $\tau_i \colon \mathfrak A_i \to \corona{\mathfrak B_i}$. Suppose that $\mathfrak A_1$ and $\mathfrak A_2$ are unital, that $\beta \colon \mathfrak B_1 \xrightarrow \cong \mathfrak B_2$ is an isomorphism, and let $\overline \beta \colon \corona{\mathfrak B_1} \xrightarrow \cong \corona{\mathfrak B_2}$ be the induced isomorphism of corona algebras. Then $\overline \beta \circ \tau_1(1_{\mathfrak A_1}) = \tau_2(1_{\mathfrak A_2})$ if and only if there is a $\ast$-homomorphism $\mu \colon \mathfrak D_{\mathfrak e_1} \to \mathfrak D_{\mathfrak e_2}$ (necessarily unique and necessarily an isomorphism) making the diagram
\[
\xymatrix{
0 \ar[r] & \mathfrak B_1 \ar[d]^\beta \ar[r] & \mathfrak D_{\mathfrak e_1} \ar[d]^\mu \ar[r] & \mathbb C \ar@{=}[d] \ar[r] & 0 \\
0 \ar[r] & \mathfrak B_2 \ar[r] & \mathfrak D_{\mathfrak e_2} \ar[r] & \mathbb C \ar[r] & 0
}
\]
commute.
\end{lemma}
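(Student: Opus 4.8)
The plan is to reduce everything to the pull-back description of $\mathfrak D_{\mathfrak e_i}$. Identifying $\mathfrak E_i$ with $\mathfrak A_i \oplus_{\tau_i, \pi_{\mathfrak B_i}} \multialg{\mathfrak B_i}$, the subalgebra $\mathfrak D_{\mathfrak e_i} = \pi_i^{-1}(\mathbb C 1_{\mathfrak A_i})$ is canonically the pull-back $\mathbb C \oplus_{(\lambda \mapsto \lambda q_i),\, \pi_{\mathfrak B_i}} \multialg{\mathfrak B_i}$, where $q_i := \tau_i(1_{\mathfrak A_i}) \in \corona{\mathfrak B_i}$; equivalently, $\mathfrak D_{\mathfrak e_i}$ is, up to canonical isomorphism of extensions, the pull-back extension $\mathfrak e_i \cdot \iota_i$ along the unital inclusion $\iota_i \colon \mathbb C \to \mathfrak A_i$, whose Busby map is $\lambda \mapsto \lambda q_i$. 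So the lemma is really a statement about when a push-out along $\beta$ carries the extension $\lambda \mapsto \lambda q_1$ to the extension $\lambda \mapsto \lambda q_2$.

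For the implication ``$\overline\beta(q_1) = q_2 \Rightarrow \mu$ exists'' I would just write $\mu$ down in these coordinates: put $\mu(\lambda, n) := (\lambda, \multialg{\beta}(n))$, with $\multialg{\beta} \colon \multialg{\mathfrak B_1} \to \multialg{\mathfrak B_2}$ the isomorphism induced by $\beta$ (footnote to Definition \ref{d:pbpo}); this is essentially the canonical push-out map. The relation $\pi_{\mathfrak B_2} \circ \multialg{\beta} = \overline\beta \circ \pi_{\mathfrak B_1}$ shows $\mu$ lands in $\mathfrak D_{\mathfrak e_2}$ exactly because $\overline\beta(q_1) = q_2$, and $\mu$ is plainly a $\ast$-homomorphism restricting to $\beta$ on $\mathfrak B_1$ and to $\id_{\mathbb C}$ on the quotient, so the square commutes.

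For the converse, assume $\mu$ making the square commute is given (it then automatically covers $\id_{\mathbb C}$ on the quotients). Choose a lift $x = (1, n) \in \mathfrak D_{\mathfrak e_1}$ of $1 \in \mathbb C$, so $\pi_{\mathfrak B_1}(n) = q_1$, and write $\mu(x) = (1, m)$, so $\pi_{\mathfrak B_2}(m) = q_2$. For $b \in \mathfrak B_1$, computing $\mu(xb)$ two ways --- as $\mu(x)\mu(b) = (0, m\beta(b))$ and as $\beta(xb) = (0, \multialg{\beta}(n)\beta(b))$ (using $\mu|_{\mathfrak B_1} = \beta$ and the defining property of $\multialg{\beta}$) --- yields $(m - \multialg{\beta}(n))\beta(b) = 0$ for all $b \in \mathfrak B_1$; since $\beta$ is surjective and $\mathfrak B_2$ is an essential ideal of $\multialg{\mathfrak B_2}$, this forces $m = \multialg{\beta}(n)$, and applying $\pi_{\mathfrak B_2}$ gives $q_2 = \overline\beta(q_1)$. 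As for the parenthetical claims: if $\mu_1, \mu_2$ both work, then $\mu_1(x) - \mu_2(x) \in \mathfrak B_2$ for every $x$ (both cover $\id_{\mathbb C}$) while $\beta(b)(\mu_1(x) - \mu_2(x)) = \mu_1(bx) - \mu_2(bx) = \beta(bx) - \beta(bx) = 0$ for all $b \in \mathfrak B_1$, so $\mathfrak B_2 \cdot (\mu_1(x)-\mu_2(x)) = 0$ and hence $\mu_1(x) = \mu_2(x)$ by an approximate-unit argument; and $\mu$ is an isomorphism by the five lemma, since $\beta$ and $\id_{\mathbb C}$ are.

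The main obstacle here is bookkeeping rather than anything conceptual: carefully pinning down the canonical identification of $\mathfrak D_{\mathfrak e_i}$ with the pull-back extension and keeping the maps $\multialg{\beta}$, $\overline\beta$, $\pi_{\mathfrak B_i}$ and the pull-back coordinates straight. The only genuinely non-formal input is the essentiality of $\mathfrak B_2 \triangleleft \multialg{\mathfrak B_2}$, which is what lets us upgrade $(m - \multialg{\beta}(n))\beta(b) = 0$ for all $b$ to $m = \multialg{\beta}(n)$.
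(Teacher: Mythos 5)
Your proof is correct and rests on the same key observation as the paper's, namely that $\mathfrak D_{\mathfrak e_i}$ is the extension of $\mathbb C$ by $\mathfrak B_i$ with Busby map $\lambda \mapsto \lambda \tau_i(1_{\mathfrak A_i})$. The paper then simply cites \cite[Theorem~2.2]{EilersLoringPedersen-Busby} (the general correspondence between morphisms of extensions and compatibility of Busby invariants), whereas you verify that special case by hand via the pull-back coordinates and essentiality of $\mathfrak B_2$ in $\multialg{\mathfrak B_2}$ --- a self-contained and entirely valid substitute.
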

\begin{proof}
The Busby maps of the extensions in the above diagram are $\mathbb C \ni \lambda \mapsto \lambda \tau_i(1_{\mathfrak A_i})$ so the result follows immediately from \cite[Theorem 2.2]{EilersLoringPedersen-Busby}.
\end{proof}

When considering the ordered $K$-theory $K_\ast^+(\mathfrak A) = (K_0^+(\mathfrak A), K_1(\mathfrak A))$ for \emph{unital} $C^\ast$-algebras $\mathfrak A$, we will often add the class of the unit to the invariant
\[
K_\ast^{+,u}(\mathfrak A) := (K_0^+(\mathfrak A), [1_{\mathfrak A}]_0, K_1(\mathfrak A)).
\]
Alternatively, we may consider the unital embedding $j \colon \mathbb C \hookrightarrow \mathfrak A$. This gives an induced diagram
\begin{equation}\label{eq:unit}
j_\ast \colon K_\ast^{+}(\mathbb C) \to K_\ast^{+}(\mathfrak A).
\end{equation}
This diagram contains exactly the same information as $K_\ast^{+,u}(\mathfrak A)$, thus motivating the following construction. 

Suppose $\mathfrak e  : 0 \to \mathfrak B \xrightarrow \iota \mathfrak E \xrightarrow \pi \mathfrak A \to 0$ is an extension of $C^\ast$-algebras for which $\mathfrak A$ is unital, but where $\mathfrak E$ is not necessarily unital. We assume for convenience that $1_{\mathfrak A}$ lifts to a projection in $\mathfrak E$.

Again, we have a unital embedding $j \colon \mathbb C \hookrightarrow \mathfrak A$, and we obtain the following pull-back diagram
\[
\xymatrix{
0 \ar[r] & \mathfrak{B} \ar@{=}[d] \ar[r] & \mathfrak D_{\mathfrak e} \ar@{^{(}->}[d] \ar[r] & \mathbb C \ar[d]^{j} \ar[r] & 0 \\
0 \ar[r] & \mathfrak B \ar[r]^\iota & \mathfrak E \ar[r]^{\pi} & \mathfrak A\ar[r] & 0,
}
\]
where $\mathfrak D_{\mathfrak e}$ is as in Notation \ref{n:De}. Our invariant will be to apply $K$-theory with order and scale to this diagram, thus obtaining the following commutative diagram
\[
\xymatrix{
K_0^{+,\Sigma}(\mathfrak B) \ar@{=}[d] \ar[r] & K_0^{+,\Sigma}(\mathfrak D_{\mathfrak e}) \ar[d] \ar[r] & K_0^{+,\Sigma}(\mathbb C) \ar[d]^{j_0} \\
K_0^{+,\Sigma}(\mathfrak B) \ar[r]^{\iota_0} & K_0^{+,\Sigma}(\mathfrak E) \ar[r]^{\pi_0} & K_0^{+,\Sigma}(\mathfrak A) \ar[d]^{\delta_0} \\
K_1(\mathfrak A) \ar[u]^{\delta_1} & K_1(\mathfrak E) \ar[l]_{\pi_1} & K_1(\mathfrak B). \ar[l]_{\iota_1}
}
\]
We denoted this diagram by $\Knew( \mathfrak{e} )$. Homomorphisms between such diagrams are defined in the obvious way.

Suppose that $\mathfrak e_i \colon 0 \to \mathfrak B_i \to \mathfrak E_i \to \mathfrak A_i \to 0$ are extensions of $C^\ast$-algebras for $i=1,2$ with $\mathfrak A_i$ unital. Suppose that there is a commutative diagram
\[
\xymatrix{
\mathfrak e_1 : & 0 \ar[r] & \mathfrak B_1 \ar[d]^\beta \ar[r] & \mathfrak E_1 \ar[d]^\eta \ar[r] & \mathfrak A_1 \ar[d]^\alpha \ar[r] & 0 \\
\mathfrak e_2 : & 0 \ar[r] & \mathfrak B_2 \ar[r] & \mathfrak E_2 \ar[r] & \mathfrak A_2 \ar[r] & 0
}
\]
where all maps are $\ast$-homomorphisms, and $\alpha$ is unital. Then $\eta(\mathfrak D_{\mathfrak e_1}) \subseteq \mathfrak D_{\mathfrak e_2}$ and thus it easily follows that $(\beta, \eta , \alpha)$ induces a homomorphism $\Knew(\mathfrak e_1) \to \Knew(\mathfrak e_2)$.

In the cases we will be considering below, we assume that $\mathfrak A$ is a unital UCT Kirchberg algebra, $\mathfrak B$ is a stable AF algebra, and $\mathfrak E$ contains a full, properly infinite projection. Hence the order and scale can be ignored in $K_0(\mathfrak E)$ and $K_0(\mathfrak A)$, and the scale of $K_0(\mathfrak B)$ can be ignored when considering $\Knew(\mathfrak e)$.

We obtain our final classification result which is exactly Theorem \ref{t:classnonunital}.

\begin{theorem}
Let $\mathfrak e_i : 0 \to \mathfrak B_i \to \mathfrak E_i \to \mathfrak A_i \to 0$ be full extensions of $C^\ast$-algebras for $i=1,2$, such that $\mathfrak A_1$ and $\mathfrak A_2$ are \emph{unital} UCT Kirchberg algebras, and $\mathfrak B_1$ and $\mathfrak B_2$ are stable AF algebras. Then $\mathfrak E_1 \cong \mathfrak E_2$ if and only if $\Knew(\mathfrak e_1) \cong \Knew(\mathfrak e_2)$.
\end{theorem}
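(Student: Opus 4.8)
The plan is to reduce to the unital classification of Proposition~\ref{p:congtoExt} by cutting the extensions down with a projection lifting the unit of the quotient, using the trick of Lemma~\ref{l:unitvsnonunit}; the extra datum $\mathfrak D_{\mathfrak e}$ in the invariant is precisely what will allow us to arrange the two Busby maps to agree on the unit. \emph{Necessity} is the easy direction: if $\mathfrak E_1\cong\mathfrak E_2$, then since $\mathfrak e_i$ is full with $\mathfrak B_i$ stable, $\mathfrak B_i$ is the unique maximal ideal of $\mathfrak E_i$ (it is maximal as $\mathfrak A_i$ is simple, and any ideal not contained in $\mathfrak B_i$ contains an element which is full modulo $\mathfrak B_i$, hence contains $\mathfrak B_i$, hence equals $\mathfrak E_i$). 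Thus an isomorphism $\mathfrak E_1\to\mathfrak E_2$ carries $\mathfrak B_1$ onto $\mathfrak B_2$ and descends to an isomorphism of extensions $\mathfrak e_1\cong\mathfrak e_2$; as $\mathfrak B_i$ has real rank zero the unit of $\mathfrak A_i$ lifts to a projection in $\mathfrak E_i$ by \cite[Proposition~3.15]{BrownPedersen-rr0}, so $\Knew(\mathfrak e_i)$ is defined, and an isomorphism of extensions induces an isomorphism $\Knew(\mathfrak e_1)\cong\Knew(\mathfrak e_2)$ by functoriality of $\mathfrak D_{(-)}$ and of the six-term sequence.

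For \emph{sufficiency}, assume $\Knew(\mathfrak e_1)\cong\Knew(\mathfrak e_2)$. Using Kirchberg--Phillips \cite{Kirchberg-simple,Phillips-classification} and Elliott's classification of AF algebras \cite{Elliott-AFclass} I realise the induced isomorphisms $K_*(\mathfrak A_1)\cong K_*(\mathfrak A_2)$ and $K_*^+(\mathfrak B_1)\cong K_*^+(\mathfrak B_2)$ by $\ast$-isomorphisms $\alpha\colon\mathfrak A_1\to\mathfrak A_2$ and $\beta\colon\mathfrak B_1\to\mathfrak B_2$; replacing $\mathfrak e_1,\mathfrak e_2$ by $\beta\cdot\mathfrak e_1,\mathfrak e_2\cdot\alpha$ exactly as in diagram \eqref{eq:bigpbpo}, I may assume $\mathfrak A_1=\mathfrak A_2=:\mathfrak A$, $\mathfrak B_1=\mathfrak B_2=:\mathfrak B$, that $\mathfrak e_1,\mathfrak e_2$ are full extensions of $\mathfrak A$ by $\mathfrak B$, and that $\Knew(\mathfrak e_1)\equiv\Knew(\mathfrak e_2)$ is a congruence -- all without changing the isomorphism classes of $\mathfrak E_1,\mathfrak E_2$.

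The crucial step is now to arrange $\tau_1(1_{\mathfrak A})=\tau_2(1_{\mathfrak A})$, where $\tau_i$ is the Busby map. The subextension $0\to\mathfrak B\to\mathfrak D_{\mathfrak e_i}\to\mathbb C\to 0$ has Busby map $\lambda\mapsto\lambda\tau_i(1_{\mathfrak A})$, which splits (lift it by $\lambda\mapsto\lambda p_i$ for any projection $p_i\in\multialg{\mathfrak B}$ lifting $\tau_i(1_{\mathfrak A})$), so $\mathfrak D_{\mathfrak e_i}$ is an AF algebra. The congruence restricts to an isomorphism $K_0^{+,\Sigma}(\mathfrak D_{\mathfrak e_1})\to K_0^{+,\Sigma}(\mathfrak D_{\mathfrak e_2})$ which is the identity on the order ideal $K_0^{+,\Sigma}(\mathfrak B)$ and on $K_0^{+,\Sigma}(\mathbb C)$; by Elliott's theorem it is induced by a $\ast$-isomorphism $\mu\colon\mathfrak D_{\mathfrak e_1}\to\mathfrak D_{\mathfrak e_2}$, which necessarily carries the ideal $\mathfrak B$ onto $\mathfrak B$ -- restricting there to some $\gamma\in\Aut(\mathfrak B)$ with $K_*(\gamma)=\id$ -- and induces $\id_{\mathbb C}$ on the quotient. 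By Lemma~\ref{l:De} this gives $\overline\gamma(\tau_1(1_{\mathfrak A}))=\tau_2(1_{\mathfrak A})$. Replacing $\mathfrak e_1$ by $\gamma\cdot\mathfrak e_1$ is harmless -- since $K_*(\gamma)=\id$ the congruence persists as $\Knew(\gamma\cdot\mathfrak e_1)\equiv\Knew(\mathfrak e_2)$, and the extension algebra is unchanged -- and, as $\mu$ extends over $\multialg{\mathfrak B}$ and sends $p_1$ to a projection $p\in\multialg{\mathfrak B}$ lifting $q:=\tau_1(1_{\mathfrak A})=\tau_2(1_{\mathfrak A})$, I have reduced to two full extensions of $\mathfrak A$ by $\mathfrak B$ with the same Busby value $q$ at the unit and a common projection lift $p$, with $[1_{\mathfrak A}\oplus p]$ matched in $K_0$ of the two extension algebras by the congruence.

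Finally, cut $\mathfrak e_1$ and $\mathfrak e_2$ down by $1_{\mathfrak A}\oplus p$ to unital extensions $\tilde{\mathfrak e}_1,\tilde{\mathfrak e}_2$ of $\mathfrak A$ by $p\mathfrak B p$; by Lemma~\ref{l:stableAF}, applied through Proposition~\ref{p:propinfvsfull} using fullness, $p$ is a strictly full, properly infinite projection, so $p\mathfrak B p$ is again a stable AF algebra. The corner $(1_{\mathfrak A}\oplus p)\mathfrak E_i(1_{\mathfrak A}\oplus p)\hookrightarrow\mathfrak E_i$ is full, so applying $K$-theory to the morphism $\tilde{\mathfrak e}_i\to\mathfrak e_i$ identifies $\Ksix(\tilde{\mathfrak e}_i)$ with the six-term sequence of $\mathfrak e_i$ carrying the classes $[1_{\mathfrak A}]\in K_0(\mathfrak A)$ and $[1_{\mathfrak A}\oplus p]\in K_0(\mathfrak E_i)$; by the previous step these are matched by the congruence, so $\Ksix(\tilde{\mathfrak e}_1)\equiv\Ksix(\tilde{\mathfrak e}_2)$. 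Proposition~\ref{p:congtoExt} then yields $\alpha'\in\Aut(\mathfrak A)$ with $K_*(\alpha')=\id$ and $\tilde{\mathfrak e}_1\sim_{\s}\tilde{\mathfrak e}_2\cdot\alpha'$; since $\tilde{\mathfrak e}_2\cdot\alpha'$ is the cut-down of $\mathfrak e_2\cdot\alpha'$ by $1_{\mathfrak A}\oplus p$ and the Busby map of $\mathfrak e_2\cdot\alpha'$ still sends $1_{\mathfrak A}$ to $q$, Lemma~\ref{l:unitvsnonunit} gives that $\mathfrak e_1$ and $\mathfrak e_2\cdot\alpha'$ are strongly unitarily equivalent, hence by diagram \eqref{eq:suediag} have isomorphic extension algebras; these algebras are $\mathfrak E_1$ and (as $\alpha'$ is an automorphism) $\mathfrak E_2$, so $\mathfrak E_1\cong\mathfrak E_2$. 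The main obstacle is the unit-matching step: by \cite{Gabe-nonunitalext} the six-term sequence alone, even with order, scale and units, does not determine the extension algebra, so $K_0^{+,\Sigma}(\mathfrak D_{\mathfrak e})$ is genuinely needed; the delicate points are upgrading the $K$-theoretic isomorphism of the $\mathfrak D_{\mathfrak e_i}$ to a $\ast$-isomorphism respecting the ideal $\mathfrak B$, and tracking the distinguished $K_0$-classes of the lifting projections through all the reductions.
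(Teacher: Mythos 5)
Your proof is correct and follows essentially the same route as the paper's: reduce via Kirchberg--Phillips and Elliott to a common $\mathfrak A$ and $\mathfrak B$, use the AF structure of $\mathfrak D_{\mathfrak e_i}$ together with Lemma~\ref{l:De} to match the Busby maps at the unit, lift to a projection $p\in\multialg{\mathfrak B}$, cut down by $1_{\mathfrak A}\oplus p$ to unital extensions whose $\Ksix$ are congruent, apply Proposition~\ref{p:congtoExt}, and undo the cut-down with Lemma~\ref{l:unitvsnonunit}. The only (harmless) organizational difference is that you apply Elliott twice -- first to realize $\psi_\ast$ by some $\beta$ and then to $\mathfrak D_{\mathfrak e_i}$ to produce a correcting automorphism $\gamma$ of $\mathfrak B$ -- whereas the paper realizes $\theta_0$ directly by an isomorphism $\mu\colon\mathfrak D_{\mathfrak e_1}\to\mathfrak D_{\mathfrak e_2}$ and takes $\beta:=\mu|_{\mathfrak B_1}$ in a single step.
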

\begin{proof}
Suppose $\mathfrak E_1 \cong \mathfrak E_2$. As the extension $\mathfrak e_i$ is full, as $\mathfrak A_i$ is simple and $\mathfrak B_i$ is stable, it follows that $\mathfrak B_i$ is the unique maximal ideal in $\mathfrak E_i$ for $i=1,2$ (see Footnote \ref{fn}). It follows that the extensions $\mathfrak e_1$ and $\mathfrak e_2$ are isomorphic, and thus $\Knew(\mathfrak e_1) \cong \Knew(\mathfrak e_2)$.

For the converse, suppose that $\Knew(\mathfrak e_1) \cong \Knew(\mathfrak e_2)$, and let
\begin{eqnarray*}
\phi_\ast \colon K_\ast^{+,\Sigma}(\mathfrak A_1) \xrightarrow \cong K_\ast^{+,\Sigma}(\mathfrak A_2), && \psi_\ast \colon K_\ast^{+,\Sigma}(\mathfrak B_1) \xrightarrow \cong K_\ast^{+,\Sigma}(\mathfrak B_2), \\ 
\rho_\ast \colon K_\ast^{+,\Sigma}(\mathfrak E_1) \xrightarrow \cong K_\ast^{+,\Sigma}(\mathfrak E_2), && \theta_0 \colon K_0^{+,\Sigma}(\mathfrak D_{\mathfrak e_1}) \xrightarrow \cong K_0^{+,\Sigma}(\mathfrak D_{\mathfrak e_2})
\end{eqnarray*}
be a collection of isomorphisms inducing the isomorphism on $\Knew$. We first show that we may assume that $\mathfrak{A} = \mathfrak{A}_1 = \mathfrak{A}_2$, $\mathfrak{B} = \mathfrak{B}_1 = \mathfrak{B}_2$, $\phi_\ast = \id_{K_\ast(\mathfrak A)}$, $\psi_\ast = \id_{K_\ast(\mathfrak B)}$, that $\tau_1(1_{\mathfrak A}) = \tau_2(1_{\mathfrak A})$, where $\tau_i$ is the Busby map of $\mathfrak e_i$ for $i=1,2$, and that $\theta_0 = K_0(\mu)$ where $\mu \colon \mathfrak D_{\mathfrak e_1} \to \mathfrak D_{\mathfrak e_2}$ is the isomorphism provided by Lemma \ref{l:De}.

By the Kirchberg--Phillips theorem \cite{Kirchberg-simple}, \cite{Phillips-classification} we may pick an isomorphism $\alpha \colon \mathfrak{A}_1 \xrightarrow \cong \mathfrak{A}_2$ such that $K_\ast(\alpha) = \phi_\ast$. 

As $\mathfrak D_{\mathfrak e_i}$ is an extension of two AF algebras, it is itself an AF algebra by \cite[Chapter 9]{Effros-book-dim}. Hence by Elliott's classification of AF algebras \cite{Elliott-AFclass} we may pick an isomorphism $\mu \colon \mathfrak D_{\mathfrak e_1} \xrightarrow \cong \mathfrak D_{\mathfrak e_2}$ such that $K_0(\mu) = \theta_0$. In particular, $\mu$ restricts to an isomorphism $\beta \colon \mathfrak B_1 \xrightarrow \cong \mathfrak B_2$ satisfying $K_0(\beta) = \psi_0$.

Forming the push-out extension $\beta \cdot \mathfrak e_1$ and the pull-back extension $\mathfrak e_2 \cdot \alpha$, we obtain a diagram identical to \eqref{eq:bigpbpo}. By Lemma \ref{l:De} we get
\[
\overline \beta \circ \tau_1(1_{\mathfrak A_1}) = \tau_2(1_{\mathfrak A_2}) = \tau_2 \circ \alpha(1_{\mathfrak A_1}).
\]
Let 
\[
\mu^{(1)} \colon \mathfrak D_{\mathfrak e_1} \xrightarrow \cong \mathfrak D_{\beta \cdot \mathfrak e_1}, \qquad \mu^{(2)} \colon \mathfrak D_{\mathfrak e_2 \cdot \alpha} \xrightarrow \cong \mathfrak D_{\mathfrak e_2}
\]
be the induced isomorphisms, i.e.~the restriction--corestriction of $\eta^{(1)}$ and $\eta^{(2)}$ respectively.

Now, it follows from \eqref{eq:bigpbpo} (by inverting the isomorphisms) that we obtain induced isomorphisms $\Knew(\beta \cdot \mathfrak e_1) \xrightarrow \cong \Knew(\mathfrak e_1)$ and $\Knew(\mathfrak e_2) \xrightarrow \cong \Knew(\mathfrak e_2 \cdot \alpha)$. By composing these isomorphisms with the already given isomorphism $\Knew(\mathfrak e_1) \xrightarrow \cong \Knew (\mathfrak e_2)$, it follows that the compositions
\begin{eqnarray*}
K_\ast(\alpha)^{-1} \circ \phi_\ast \circ K_\ast(\id_{\mathfrak A_1})^{-1} = \id_{K_\ast(\mathfrak A_1)}, && K_\ast(\id_{\mathfrak B_2})^{-1} \circ \psi_\ast \circ K_\ast(\beta)^{-1} = \id_{K_\ast(\mathfrak B_2)} \\
K_\ast(\eta^{(2)})^{-1} \circ \rho_\ast \circ K_\ast(\eta^{(1)})^{-1} , && K_0(\mu^{(2)})^{-1} \circ \theta_0 \circ K_0(\mu^{(1)})^{-1}
\end{eqnarray*}
give rise to an isomorphism $\Knew(\beta \cdot \mathfrak e_1) \xrightarrow \cong \Knew(\mathfrak e_2 \cdot \alpha)$. Moreover, observe that $\mu^{(0)} := (\mu^{(2)})^{-1} \circ \mu \circ (\mu^{(1)})^{-1}$ is the unique (by Lemma \ref{l:De}) $\ast$-homomorphism making the diagram
\[
\xymatrix{
0 \ar[r] & \mathfrak B_2 \ar@{=}[d] \ar[r] & \mathfrak D_{\beta \cdot \mathfrak e_1} \ar[d]^{\mu^{(0)}} \ar[r] & \mathbb C \ar@{=}[d] \ar[r] & 0 \\
0 \ar[r] & \mathfrak B_2 \ar[r] & \mathfrak D_{\mathfrak e_2 \cdot \alpha} \ar[r] & \mathbb C \ar[r] & 0
}
\]
commute, and that $K_0(\mu^{(0)}) = K_0(\mu^{(2)})^{-1} \circ \theta_0 \circ K_0(\mu^{(1)})^{-1}$.

Therefore, without loss of generality, we may assume that $\mathfrak{A} = \mathfrak{A}_1 = \mathfrak{A}_2$, $\mathfrak{B} = \mathfrak{B}_1 = \mathfrak{B}_2$, $\phi_\ast = \id_{K_\ast(\mathfrak A)}$, $\psi_\ast = \id_{K_\ast(\mathfrak B)}$ that $\tau_1(1_{\mathfrak A}) = \tau_2(1_{\mathfrak A})$,  and that that $\theta_0 = K_0(\mu)$ where $\mu \colon \mathfrak D_{\mathfrak e_1} \to \mathfrak D_{\mathfrak e_2}$ is the map provided by Lemma \ref{l:De} (with $\beta = \id_{\mathfrak B}$).

As $\mathfrak B$ has real rank zero and $K_1(\mathfrak B) = 0$, the projection $\tau_1(1_{\mathfrak A}) \in \corona{\mathfrak B}$ lifts to a projection $p \in \multialg{\mathfrak B}$ by \cite[Corollary 3.16]{BrownPedersen-rr0}. In particular, by identifying $\mathfrak E_i$ with $\mathfrak A \oplus_{\tau_i , \pi_\mathfrak{B}} \multialg{\mathfrak B}$ in the canonical way, $1_{\mathfrak A} \oplus p$ defines a projection both in $\mathfrak E_1$ and in $\mathfrak E_2$ since $\tau_1(1_{\mathfrak A}) = \tau_2(1_{\mathfrak A})$. Note that when identifying $\mathfrak D_{\mathfrak e_1}$ and $\mathfrak D_{\mathfrak e_2}$ in a canonical way with a subalgebra of $\mathfrak A \oplus \multialg{\mathfrak B}$, then $\mu$ is simply the identity map. Hence $\mu(1_{\mathfrak A} \oplus p) = 1_{\mathfrak A} \oplus p$. In particular, by commutativity of the diagram
\[
\xymatrix{
K_0(\mathfrak D_{\mathfrak e_1}) \ar[r] \ar[d]^{\theta_0 = K_0(\mu)} & K_0(\mathfrak E_1) \ar[d]^{\rho_0} \\
K_0(\mathfrak D_{\mathfrak e_2}) \ar[r] & K_0(\mathfrak E_2),
}
\]
which is part of $\Knew(\mathfrak e_1) \to \Knew(\mathfrak e_2)$, it follows that $\rho_0([1_{\mathfrak A} \oplus p]) = [1_{\mathfrak A} \oplus p]$.

By Theorem \ref{t:fullext} it follows that $1_{\mathfrak A} \oplus p$ is a full, properly infinite projection in both $\mathfrak E_1$ and $\mathfrak E_2$. Moreover, $p\mathfrak B p$ is a full and stable corner in $\mathfrak B$ by Lemma \ref{l:stableAF}. Let 
\[
\iota \colon p \mathfrak B p \hookrightarrow \mathfrak B , \qquad \iota_i \colon (1_\mathfrak{A} \oplus p) \mathfrak E_i (1_{\mathfrak A} \oplus p) \hookrightarrow \mathfrak E_i
\]
for $i=1,2$ denote the inclusions, which are all inclusions of full, hereditary, $C^\ast$-subalgebras in separable $C^\ast$-algebras and thus induce isomorphisms in $K$-theory. Since $\rho_0([1_{\mathfrak A} \oplus p]) = [1_{\mathfrak A} \oplus p]$ it follows that the map
\[
K_\ast(\iota_2)^{-1} \circ \rho_\ast \circ K_\ast(\iota_1) \colon K_\ast((1_{\mathfrak A} \oplus p)\mathfrak E_1(1_{\mathfrak A} \oplus p)) \to K_\ast((1_{\mathfrak A} \oplus p)\mathfrak E_2(1_{\mathfrak A} \oplus p)) 
\]
induces a congruence $\Ksix(p \mathfrak e_1 p) \equiv \Ksix(p \mathfrak e_2 p)$, where $p\mathfrak e_i p$ denotes the unital extension
\[
0 \to p\mathfrak B p \to (1_{\mathfrak A} \oplus p)\mathfrak E_1(1_{\mathfrak A} \oplus p) \to \mathfrak A \to 0
\]
for $i=1,2$.

Thus, by Proposition \ref{p:congtoExt} there is an automorphism $\alpha \in \Aut(\mathfrak A)$ such that $p \mathfrak e_1 p$ and $p \mathfrak e_2 p \cdot \alpha = p (\mathfrak e_2 \cdot \alpha) p$ are strongly unitarily equivalent. By Lemma \ref{l:unitvsnonunit} it follows that $\mathfrak e_1$ and $\mathfrak e_2 \cdot \alpha$ are strongly unitarily equivalent. As the extension algebra of $\mathfrak e_2 \cdot \alpha$ is isomorphic to $\mathfrak E_2$, it follows that $\mathfrak E_1 \cong \mathfrak E_2$ as desired.
\end{proof}

\begin{remark}
In a future paper \cite{EGKRT-extgraph} we compute the range of the invariant $\Knew$ for graph $C^\ast$-algebras with a unique, non-trivial ideal. This will be used to show that an extension of two simple graph $C^\ast$-algebras is again a graph $C^\ast$-algebra, provided there are no $K$-theoretic obstructions.
\end{remark}

\newcommand{\etalchar}[1]{$^{#1}$}


\begin{thebibliography}{PTWW14}

\bibitem[Bla98]{Blackadar-book-K-theory}
B.~Blackadar.
\newblock {\em {$K$}-theory for operator algebras}, volume~5 of {\em
  Mathematical Sciences Research Institute Publications}.
\newblock Cambridge University Press, Cambridge, second edition, 1998.

\bibitem[BRR08]{BlanchardRohdeRordam-K1inj}
E.~Blanchard, R.~Rohde, and M.~R{\o}rdam.
\newblock Properly infinite {$C(X)$}-algebras and {$K_1$}-injectivity.
\newblock {\em J. Noncommut. Geom.}, 2(3):263--282, 2008.

\bibitem[Bro88]{Brown-semicontmultipliers}
L.G.~Brown.
\newblock Semicontinuity and multipliers of {$C^*$}-algebras.
\newblock {\em Canad. J. Math.}, 40(4):865--988, 1988.

\bibitem[BP91]{BrownPedersen-rr0}
L.G.~Brown and G.K.~Pedersen.
\newblock {$C^*$}-algebras of real rank zero.
\newblock {\em J. Funct. Anal.}, 99(1):131--149, 1991.

\bibitem[CE76]{ChoiEffros-lifting}
M.D.~Choi and E.G.~Effros.
\newblock The completely positive lifting problem for {$C\sp*$}-algebras.
\newblock {\em Ann. of Math. (2)}, 104(3):585--609, 1976.

\bibitem[Cun81]{Cuntz-K-theoryI}
J.~Cuntz.
\newblock {$K$}-theory for certain {$C^{\ast} $}-algebras.
\newblock {\em Ann. of Math. (2)}, 113(1):181--197, 1981.

\bibitem[CH87]{CuntzHigson-Kuipersthm}
J.~Cuntz and N.~Higson.
\newblock Kuiper's theorem for {H}ilbert modules.
\newblock In {\em Operator algebras and mathematical physics ({I}owa {C}ity,
  {I}owa, 1985)}, volume~62 of {\em Contemp. Math.}, pages 429--435. Amer.
  Math. Soc., Providence, RI, 1987.

\bibitem[Eff81]{Effros-book-dim}
E.G.~Effros.
\newblock {\em Dimensions and {$C^{\ast} $}-algebras}, volume~46 of {\em CBMS
  Regional Conference Series in Mathematics}.
\newblock Conference Board of the Mathematical Sciences, Washington, D.C.,
  1981.

\bibitem[EGK{\etalchar{+}}18]{EGKRT-extgraph}
S.~Eilers, J.~Gabe, T.~Katsura, E.~Ruiz, and M.~Tomforde.
\newblock The extension problem for graph {$C^\ast$}-algebras.
\newblock {\em ArXiv:1810.12147}, 2018.

\bibitem[ELP99]{EilersLoringPedersen-Busby}
S.~Eilers, T.A.~Loring, and G.K.~Pedersen.
\newblock Morphisms of extensions of {$C^*$}-algebras: pushing forward the
  {B}usby invariant.
\newblock {\em Adv. Math.}, 147(1):74--109, 1999.

\bibitem[ERR09]{EilersRestorffRuiz-classext}
S.~Eilers, G.~Restorff, and E.~Ruiz.
\newblock Classification of extensions of classifiable {$C^\ast$}-algebras.
\newblock {\em Adv. Math.}, 222(6):2153--2172, 2009.

\bibitem[ERRS16]{ERRS-classunitalgraph}
S.~Eilers, G.~Restorff, E.~Ruiz, and A.~S{\o}rensen.
\newblock The complete classification of unital graph {$C^\ast$}-algebras:
  {G}eometric and strong.
\newblock {\em Preprint, arXiv:1611.07120}, 2016.

\bibitem[Ell76]{Elliott-AFclass}
G.A.~Elliott.
\newblock On the classification of inductive limits of sequences of semisimple
  finite-dimensional algebras.
\newblock {\em J. Algebra}, 38(1):29--44, 1976.

\bibitem[EGLN15]{ElliottGongLinNiu-classfindec}
G.A.~Elliott, G.~Gong, H.~Lin, and Z.~Niu.
\newblock On the classification of simple {$C^\ast$}-algebras with finite
  decomposition rank, {II}.
\newblock {\em ArXiv:1507.03437v2}, 2015.

\bibitem[EK01]{ElliottKucerovsky-extensions}
G.A.~Elliott and D.~Kucerovsky.
\newblock An abstract {V}oiculescu-{B}rown-{D}ouglas-{F}illmore absorption
  theorem.
\newblock {\em Pacific J. Math.}, 198(2):385--409, 2001.

\bibitem[Gab16]{Gabe-nonunitalext}
J.~Gabe.
\newblock A note on nonunital absorbing extensions.
\newblock {\em Pacific J. Math.}, 284(2):383--393, 2016.

\bibitem[GLN15]{GongLinNiu-classZ-stable}
G.~Gong, H.~Lin, and Z.~Niu.
\newblock Classification of finite simple amenable {$\mathcal Z$}-stable
  {$C^*$}-algebras.
\newblock {\em ArXiv:1501.00135v4}, 2015.

\bibitem[Kas80]{Kasparov-KKExt}
G.G.~Kasparov.
\newblock The operator {$K$}-functor and extensions of {$C^{\ast} $}-algebras.
\newblock {\em Izv. Akad. Nauk SSSR Ser. Mat.}, 44(3):571--636, 719, 1980.

\bibitem[Kir94]{Kirchberg-simple}
E.~Kirchberg.
\newblock The classification of purely infinite {$C^\ast$}-algebras using
  {K}asparov's theory.
\newblock 1994.

\bibitem[KN06]{KucerovskyNg-corona}
D.~Kucerovsky and P.W.~Ng.
\newblock The corona factorization property and approximate unitary
  equivalence.
\newblock {\em Houston J. Math.}, 32(2):531--550 (electronic), 2006.

\bibitem[MT06]{ManuilovThomsen-unitalext}
V.~Manuilov and K.~Thomsen.
\newblock The group of unital {$C^\ast$}-extensions.
\newblock In {\em {$C^\ast$}-algebras and elliptic theory}, Trends Math., pages
  151--156. Birkh\"auser, Basel, 2006.

\bibitem[Nag89]{Nagy-liftinginv}
G.~Nagy.
\newblock Some remarks on lifting invertible elements from quotient
  {$C^*$}-algebras.
\newblock {\em J. Operator Theory}, 21(2):379--386, 1989.

\bibitem[Nis86]{Nistor-stablerank}
V.~Nistor.
\newblock Stable range for tensor products of extensions of {${\mathscr K}$} by
  {$C(X)$}.
\newblock {\em J. Operator Theory}, 16(2):387--396, 1986.

\bibitem[OPR12]{OrtegaPereraRordam-cfp}
E.~Ortega, F.~Perera, and M.~R{\o}rdam.
\newblock The corona factorization property, stability, and the {C}untz
  semigroup of a {$C^\ast$}-algebra.
\newblock {\em Int. Math. Res. Not. IMRN}, (1):34--66, 2012.

\bibitem[PTWW14]{PereraTomsWhiteWinter-Cu}
F.~Perera, A.~Toms, S.~White, and W.~Winter.
\newblock The {C}untz semigroup and stability of close {$C^\ast$}-algebras.
\newblock {\em Anal. PDE}, 7(4):929--952, 2014.

\bibitem[Phi00]{Phillips-classification}
N.C.~Phillips.
\newblock A classification theorem for nuclear purely infinite simple
  {$C^*$}-algebras.
\newblock {\em Doc. Math.}, 5:49--114 (electronic), 2000.

\bibitem[Rie83]{Rieffel-sr}
M.A.~Rieffel.
\newblock Dimension and stable rank in the {$K$}-theory of
  {$C^{\ast}$}-algebras.
\newblock {\em Proc. London Math. Soc. (3)}, 46(2):301--333, 1983.

\bibitem[Rob11]{Robert-nucdim}
L.~Robert.
\newblock Nuclear dimension and {$n$}-comparison.
\newblock {\em M\"unster J. Math.}, 4:65--71, 2011.

\bibitem[R{\o}r97]{Rordam-classsixterm}
M.~R{\o}rdam.
\newblock Classification of extensions of certain {$C^*$}-algebras by their six
  term exact sequences in {$K$}-theory.
\newblock {\em Math. Ann.}, 308(1):93--117, 1997.

\bibitem[RLL00]{Rordam-book-K-theory}
M.~R{\o}rdam, F.~Larsen, and N.~Laustsen.
\newblock {\em An introduction to {$K$}-theory for {$C^*$}-algebras}, volume~49
  of {\em London Mathematical Society Student Texts}.
\newblock Cambridge University Press, Cambridge, 2000.

\bibitem[RS87]{RosenbergSchochet-UCT}
J.~Rosenberg and C.~Schochet.
\newblock The {K}\"unneth theorem and the universal coefficient theorem for
  {K}asparov's generalized {$K$}-functor.
\newblock {\em Duke Math. J.}, 55(2):431--474, 1987.

\bibitem[Ska84]{Skandalis-unitalExt}
G.~Skandalis.
\newblock On the strong {$\mathrm{Ext}$} bifunctor.
\newblock {\em preprint}, 1984.

\bibitem[Ska88]{Skandalis-KKnuc}
G.~Skandalis.
\newblock Une notion de nucl\'earit\'e en {$K$}-th\'eorie (d'apr\`es {J}.\
  {C}untz).
\newblock {\em $K$-Theory}, 1(6):549--573, 1988.

\bibitem[Tho01]{Thomsen-absorbing}
K.~Thomsen.
\newblock On absorbing extensions.
\newblock {\em Proc. Amer. Math. Soc.}, 129(5):1409--1417 (electronic), 2001.

\bibitem[TWW17]{TikuisisWhiteWinter-qdnuc}
A.~Tikuisis, S.~White, and W.~Winter.
\newblock Quasidiagonality of nuclear {$C^\ast$}-algebras.
\newblock {\em Ann. of Math. (2)}, 185(1):229--284, 2017.

\bibitem[Wei15]{Wei-classext}
C.~Wei.
\newblock On the classification of certain unital extensions of
  {$C^*$}-algebras.
\newblock {\em Houston J. Math.}, 41(3):965--991, 2015.

\end{thebibliography}
\end{document}